\documentclass[reqno,a4paper, 11pt]{amsart}
\usepackage[english]{babel}
\usepackage[utf8]{inputenc}

\usepackage[a4paper=true,pdfpagelabels]{hyperref}
\usepackage{graphicx}

\usepackage{epsfig}
\usepackage{latexsym}
\usepackage{mathabx}
\usepackage{amsthm,amsfonts,amsmath,mathrsfs,amssymb}

\usepackage{color}
\def\a{\alpha}       \def\b{\beta}        
            \def\om{\omega}

\def\D{{\mathbb D}}  \def\T{{\mathbb T}}
\def\C{{\mathbb C}}  \def\N{{\mathbb N}}

\def\({\left(}       \def\){\right)}

\renewcommand{\H}{\mathcal{H}}
\newcommand{\I}{\mathcal{I}}

\newtheorem{theorem}{Theorem}[section]
\newtheorem{lemma}[theorem]{Lemma}
\newtheorem{proposition}[theorem]{Proposition}
\newtheorem{corollary}[theorem]{Corollary}
\newtheorem{lettertheorem}{Theorem}
\newtheorem{letterlemma}[lettertheorem]{Lemma}

\theoremstyle{definition}
\newtheorem{definition}[theorem]{Definition}
\newtheorem{example}[theorem]{Example}

\theoremstyle{remark}
\newtheorem{remark}[theorem]{Remark}

\numberwithin{equation}{section}

\newenvironment{Prf}{\noindent{\emph{Proof of}}}
{\hfill$\Box$ }

\DeclareMathOperator*{\esssup}{ess\,sup}

\begin{document}

\title[The Hilbert matrix on analytic tent spaces]{The Hilbert matrix on analytic tent spaces}

\author[Tanausú Aguilar-Hernández]{Tanausú Aguilar-Hernández}
\address{Departamento de Matemática Aplicada, Universidad de Málaga, Campus de Teatinos, 29071, Málaga, Spain}
\email{taguilar@uma.es}
\author[Petros Galanopoulos]{Petros Galanopoulos}
\address{Department of Mathematics, Aristotle University of Thessaloniki, 54124, Thessaloniki, Greece}
\email{petrosgala@math.auth.gr}
\author[Elena de la Rosa]{Elena de la Rosa}
\address{Departamento de Matemáticas Fundamentales, Facultad de Ciencias, Universidad Nacional de Educación a Distancia, Madrid, Spain}
\email{elenadelarosa@mat.uned.es}

\subjclass[2020]{Primary 47B35; Secondary 30H, 30H10, 30H20, 46E15}

\keywords{Hilbert matrix, Hilbert operator, Tent spaces, Radial integrability}

\thanks{The first author was partially supported by Ministerio de Innovación y Ciencia, Spain, project PID2022-136320NB-I00. The second author  was partially supported by the Hellenic Foundation for Research and Innovation (H.F.R.I.) under the 2nd Call for Research Projects to support Faculty Members and Researchers (Project Number 4662). The third author was partially supported by Ministerio de Ciencia e Innovacion, Spain, project PID2022-136619NB-I00; and La Junta de Andalucía, project FQM210. }

\maketitle

\begin{abstract}
 We study for the first time the action of the Hilbert matrix $$\H=(c_{n,k})_{n,k\geq 0}, \quad c_{n,k}=\frac{1}{n+k+1}$$ 
 on the analytic tent spaces $AT^q_p, 1<p,q <\infty,$ of the unit disc $\mathbb D$ of the complex plane. They were proposed by Triebel as the natural analytic version of the tent spaces of measurable functions defined by Coifman, Meyer and Stein. The $AT_{p}^{q}$ spaces are consisted of those analytic functions $f$ in $\mathbb D$ such that  
 \begin{align*}
 \|f\|_{AT_{p}^{q}}= \left\{\int_{\T} \left(\int_{\Gamma_{1/2}(\xi)} |f(z)|^p \ \frac{dA(z)}{1-|z|^2} \right)^{q/p}\ |d\xi|\right \}^{1/q}<+\infty,
  \end{align*}
 where
 \begin{align*}
 	\Gamma_{1/2}(\xi) =\bigl\{ z\in \mathbb{D} : |z|< 1/2   \bigr\} \cup \bigcup_{|z|<1/2}[z,\xi),
 \end{align*}
$dA(z)$ is the normalized area Lebesgue measure in $\mathbb D$ and $|d\xi|$ is the arc length in the unit circle $\mathbb T$. The Bergman spaces $A^p, p>1,$ stand among the $AT_{p}^{q}$ and correspond to the case $p=q$. The multiplication of the Hilbert matrix with the column matrix with entries the Taylor coefficients of an $f(z)=\sum_{k\geq 0} a_k z^k $ analytic in $\mathbb D$ introduces the series
\begin{equation*}
	\H(f)(z)=
	\sum_{n=0}^{\infty}\left(\sum_{k=0}^{\infty}
	\frac{a_k}{n+k+1}\right)z^n\,, \quad z\in \mathbb D\,\,
\end{equation*}
known in the literature as Hilbert operator. We prove that it is a bounded operator
on the $AT_{p}^{q}$ when $1/p + 1/q <1,\, p>2$. This is a natural range for the values of the indices $p,q$ compared to what is known in the special case of the Bergman spaces.
 We confront the question 
under discussion through a more general point of view by studying an associated integral operator defined with respect to a positive Borel measure $\mu$ on $[0,1)$. Finally, we provide an estimation of the norm of the Hilbert operator. Our work extends in a non-trivially way previous results on the Bergman spaces to the analytic tent spaces. 
\end{abstract}

\section{Introduction}

The Hilbert matrix is the infinite Hankel matrix 
$$\H=\left(%
\begin{array}{ccccc}
            1 & \frac{1}{2}  & \frac{1}{3}   &. \\
  \frac{1}{2} & \frac{1}{3}  & \frac{1}{4}  &.  \\
  \frac{1}{3} & \frac{1}{4}  & \frac{1}{5}  & . \\
  .  & . & . & .  \\
\end{array}
\right).
$$
The  multiplication of $\H$ with the column matrix formed by the terms of a complex sequence $\{a_k\}_k$ defines the operator
\begin{equation}\label{hilbert lp}
	\{a_k\}_k\,\, \mapsto \Big\{\sum_{k=0}^{\infty}
	\frac{a_k}{n+k+1}\Big\}_n, \quad n\in\N\cup\{0\}\,.
\end{equation}
Hilbert's inequality implies that the operator (\ref{hilbert lp}) is bounded on the $\ell^p$ spaces when $p \in (1,\infty),$ (see \cite{HLP}). Taking into account that $\ell^2$ is identified to the Hardy space $H^2$ we can equivalently state that (\ref{hilbert lp}) introduces a bounded operator on $H^2$ in the following sense. 
Let $\mathbb D$ be the unit disc in the complex plane and $ \mathcal H(\mathbb D)$ be the  analytic functions in $\mathbb D.$ We say that an $f(z)=\sum_{k=0}^\infty a_kz^k\in \mathcal H(\D) $ belongs to $H^2$ if and only if 
\begin{equation}\label{Norm H2}
\|f\|^2_{H^2} = \sum_{k\geq 0}\, |a_k|^2 < \infty. 
\end{equation}
Considering (\ref{hilbert lp}) on the Taylor coefficients of an $f \in H^2$
we can formally define the series
\begin{equation}\label{H}
\H(f)(z)=
\sum_{n=0}^{\infty}\left(\sum_{k=0}^{\infty}
\frac{a_k}{n+k+1}\right)z^n\,, \quad z\in \mathbb D\,.
\end{equation}
Just an application of the Cauchy-Schwartz inequality on the inner sum
is sufficient to verify that the series (\ref{H})
is well defined and represents an analytic function in $\mathbb D$. 
As a result it is defined the operator 
\begin{equation}\label{hilbert H2}
	f\,\, \mapsto \H(f)
\end{equation}
which is bounded in $H^2$.

The linear operator (\ref{hilbert H2}) is known in the literature as Hilbert operator. Its study has been extended to the Hardy spaces $H^p, p\in [1,\infty)$, which are  consisted of those $f\in \mathcal H(\mathbb D)$ that 
\begin{equation}\label{Norm Hp}
\|f\|_{H^p} = \sup_{r\in(0,1)}  \Big\{\frac {1}{2\pi} \int_0^{2\pi}\, |f(re^{i\theta})|^p\,d\theta \Big\}^{1/p}\,<\infty\,.
\end{equation}
When $p=2$ the two norms (\ref{Norm H2}) and (\ref{Norm Hp}) are identified.
If $p\neq 2$ the well definition of the series (\ref{H}) and the fact that it represents an analytic function in $\mathbb D$ come from the strength of Hardy's inequality
\begin{equation}\label{hardy's inequality}
	\sum_{k\geq 0} \frac{|a_k|}{k+1} \leq \pi \,\|f\|_{H^1}
\end{equation}
and the inclusions
$
H^q \subset H^p \subset H^1 \,, \,\,\,1<p<q\,.
$ 
 For more information on the Hardy spaces theory, we suggest \cite{Du}.
 
However, Hilbert's inequality no longer serves
in order to prove the boundedness of (\ref{hilbert H2}) when $ p\neq 2$. The key for the study of the Hilbert operator on these spaces 
is the identity 
\begin{equation}\label{identity}
	\H(f)(z)=\sum_{n=0}^{\infty}\left(\sum_{k=0}^{\infty}
	\frac{a_k}{n+k+1}\right)z^n=\int_0^1 \frac{f(t)}{1-tz} \,dt, \quad z\in \mathbb D
	\end{equation}
which is easy to check that it is true when $f$ is an analytic polynomial, but to say that it holds for any $f\in H^p, p\in [1,\infty)$, we have to apply once more Hardy's inequality. 
As a consequence of (\ref{identity}),  in order to study the Hilbert operator on the Hardy spaces we employ
 the integral operator  
 \begin{equation}\label{I}
 \I(f)(z) = \int_0^1 \frac{f(t)}{1-tz}\, dt, \quad z \in \mathbb D .
\end{equation}
 Being that the case, it turns out that the Hilbert operator is bounded on the $H^p, p>1.$ Boundedness brakes down in $H^1$. 
 See \cite{DiS, DJV}.
 
 The operator (\ref{I}) is the key for the study of the Hilbert operator on the Bergman spaces $A^p, p\in (2,\infty)$, as well. 
 We say that an $f\in \mathcal H(\mathbb D)$ belongs to the Bergman space $A^p$ if 
 $$
 \|f\|_{A^p} =  \Big\{\int_{\mathbb D}|f(z)|^p\,dA(z)  \Big\}^{1/p}<\infty,
 $$
 that is, if $f$ is $p-$integrable  with respect to the normalized area Lebesgue measure on $\mathbb D$. 
 Notice that if $f(z)=\sum_{k\geq 0} a_k z^k \in A^p, p>2$, then 
 \begin{equation}\label{hardy's inequality Bergman}
 \sum_{k\geq 0} \frac{|a_k|}{k+1} \leq C \|f\|_{A^p}\,,
 \end{equation}
(see \cite[p. 307]{ArJeVu}, \cite[Lemma 4.1]{NowPav}) 
On account of condition (\ref{hardy's inequality Bergman}), we can verify that 
the series (\ref{H}) are well defined and that (\ref{identity}) holds for any  $f\in A^p$, $p>2$. Consequently, we are allowed to engage appropriately the integral operator (\ref{I})  and get that the Hilbert operator is bounded on any $A^p, p>2$ (see \cite{Di}). In the case  $p=2$, the series (\ref{H}) is not even well defined. Indeed, if $f(z)=\sum_{k\geq 0} a_k z^k$ then 
$$
\|f\|_{A^2}^2= \sum_{k\geq 0} \frac{1}{k+1} \,|a_k|^2 \,.
$$
Choosing the 
 $$
f(z)=\sum_{k\geq 0} \frac{1}{\log(k+1)} z^k \in A^2 \,,
$$
we get that for any $n\in \N$
$$
\sum_{k\geq 0} \frac{1}{\log(k+1) (n+k+1)} =\infty
$$
(see \cite{Di}, \cite[p. 2810]{DJV}).

Our aim is to extend the study of the Hilbert operator to the analytic tent spaces $AT^q_p$, when $1<p,q<\infty$. 
The Bergman spaces  stand among them and the Hardy spaces are identified to a limit case. Let a $\xi\in \T$. We define the cone-like region
\begin{align*}
	\Gamma_{1/2}(\xi) =\bigl\{ z\in \mathbb{D} : |z|< 1/2   \bigr\} \cup \bigcup_{|z|<1/2}[z,\xi)\,. 
\end{align*}
The  $AT_p^q $ spaces consist of those $f \in \mathcal{H}(\D)$ such that 
\begin{align}\label{TentA}
	\|f\|_{AT_{p}^{q}}= \left\{\int_{\T} \left(\int_{\Gamma_{1/2}(\xi)} |f(z)|^p \ \frac{dA(z)}{(1-|z|^2)} \right)^{q/p}\ |d\xi|\right \}^{1/q}<+\infty.
\end{align}
If $p=q$ then $AT^p_p \equiv A^p$ as it is verified by an application of Fubini's Theorem.
The limit case $AT^q_{\infty}$ defined as 
\begin{align}\label{TentA}
	\|f\|_{AT_{\infty}^{q}}= \left\{\int_{\T} 
	\left(\sup_{z \in \Gamma_{1/2}(\xi)} |f(z)|\right)^q 
	|d\xi|\right \}^{1/q}<+\infty
\end{align}
are exactly the Hardy spaces  $H^q$, $1< q < \infty$ (see, e.g., \cite{Rudin,Pav}).
The work of Coifman, Meyer and Stein is considered as the starting point for the theory of the tent spaces $T^q_p$ of measurable functions (see \cite{CMS}). Triebel introduced the $AT^q_p$ as the 
natural  analytic version of them. We provide more information on the subject in Section $2$.
Since then, the analytic tent spaces have been widely studied by many authors 
(see, e.g.,  \cite{Arsenovic, CohnVerbitsky, Jevtic_1996, Luecking1987, Luecking, MiihkinenPauPeralaWang2020, Perala} ).


In the frames of this work, we focus  on the study of the action of the Hilbert operator on $AT^q_p$ when $ 1<p,q < \infty, 1/p+1/q <1$ since otherwise it  is not even  well defined. For instance, consider the function
$$
f(z)=\frac{1}{1-z}=\sum_{k\geq 0} z^k \,.
$$ 
It is proved in \cite[Example 2.3]{AgConRodr} that $f\in AT^q_p,\, 1/p+1/q >1$. So, for that $f$ the coefficients of the series (\ref{H}) are not defined since 
$$
\sum_{k\geq 0} \frac{1}{n+k+1}=\infty,\quad n\in \mathbb N\,.
$$
Even in the limit case  $1/p+1/q =1$ it appears a similar irregularity. Since $A^2\subseteq AT^q_p$  when $ p\in (1,2]$ (see \cite[Theorem 3.3]{AgConRodr}), it suffices to consider the
$$
f(z)=\sum_{k\geq 0} \frac{1}{\log(n+1)} z^n \in A^2 \,.
$$
That is enough in order to realize that the Hilbert operator is not defined in the case $1/p+1/q =1$ and $p\in (1,2] $. 

Actually, the same test function serves for any $AT^q_p, 1/p+1/q=1$ as we make clear below. According to Theorem $2.31$ in \cite{Zygmund}, we are aware that  if
$$
f(z)=\frac{1}{(1-z)\log\left(\frac{e}{1-z}\right)}=\sum_{n\geq 0} A_n z^n ,\,\,z\in \D
$$
then 
$$
A_n \simeq \frac{1}{\log(n+1)}\,, \, \,\,n\to \infty\,.
$$
 However, it is needed more elaboration to prove now that $f \in AT^q_p, \,1/p+1/q =1, p \in (2,\infty)$  which in its turn establishes that the series (\ref{H}) is not defined in any of them too. In Section $3$, we prove the following:
 \begin{proposition}\label{test function}
 	Let $1< p,q<\infty$, $\frac{1}{p}+\frac{1}{q}= 1$ and $p>2$ then 
 	$$ f(z)=\frac{1}{(1-z)\log\left(\frac{e}{1-z}\right)}\,\in \,AT^q_p \,.$$
 	 \end{proposition}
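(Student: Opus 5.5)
The plan is to estimate the inner cone integral in \eqref{TentA} directly, using the size of $f$ near its only singularity $z=1$, and then integrate the resulting function of $\xi$ over $\T$. Write $L(x)=\log(e/x)$ for $0<x\le 2$.

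\textbf{Step 1: pointwise bound for $f$.} Since $\operatorname{Re}(1-z)>0$ on $\D$, we have $|\log(e/(1-z))|\ge \log(e/|1-z|)\ge \log(e/2)>0$. Hence
$$|f(z)|\lesssim \frac{1}{|1-z|\,L(|1-z|)},\qquad z\in\D .$$
In particular $f$ is bounded on $\{|z|<1/2\}$ and on any part of $\D$ away from $1$. So only $z$ near $1$, and only $\xi=e^{i\theta}$ with $|\theta|$ small, need attention; we may take $|\theta|<\delta$ for a small fixed $\delta>0$.

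\textbf{Step 2: the inner integral.} Parametrize points of $\Gamma_{1/2}(e^{i\theta})$ with $|z|\ge 1/2$ by $h=1-|z|$. The standard geometry of the aperture-$1/2$ cone gives two facts:
\begin{itemize}
\item at height $h$, the points of the cone lie in an angular interval of length $\simeq h$ around $\theta$;
\item on the cone, $|1-z|\simeq |\theta|+h$.
\end{itemize}
Consequently the measure $dA(z)/(1-|z|^2)$ restricted to the slab at height $h$ has mass $\simeq h\,dh/h=dh$. Since $x\mapsto x^{-p}L(x)^{-p}$ is essentially decreasing, this gives
$$I(\theta):=\int_{\Gamma_{1/2}(e^{i\theta})}|f(z)|^p\,\frac{dA(z)}{1-|z|^2}\lesssim 1+\int_0^{1}\frac{dh}{(|\theta|+h)^pL(|\theta|+h)^p}.$$
Split the last integral at $h=|\theta|$. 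On $[0,|\theta|]$ the integrand is $\lesssim |\theta|^{-p}L(|\theta|)^{-p}$, contributing $|\theta|^{1-p}L(|\theta|)^{-p}$. On $[|\theta|,1]$ it is $\lesssim h^{-p}L(h)^{-p}$, and since $p>1$ and $L$ is slowly varying, the integral of this is again $\lesssim |\theta|^{1-p}L(|\theta|)^{-p}$. So
$$I(\theta)\lesssim |\theta|^{1-p}L(|\theta|)^{-p}.$$

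\textbf{Step 3: the outer integral.} Raising to the power $q/p$ and using $1/p+1/q=1$, we get $(p-1)q/p=q(1-1/p)=1$. Hence
$$I(\theta)^{q/p}\lesssim \frac{1}{|\theta|\,L(|\theta|)^{q}},$$
and this is integrable near $0$ because $q>1$: the substitution $u=\log(e/|\theta|)$ turns it into $\int^\infty u^{-q}\,du$. Away from $\theta=0$ the integrand is bounded, so $\|f\|_{AT^q_p}<\infty$.

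\textbf{Main obstacle.} The delicate step is the geometric comparison in Step 2: justifying $|1-z|\simeq|\theta|+h$ uniformly on the cone, and that each slab carries $\simeq dh$ of the measure. I would handle this by the elementary estimate $|1-re^{it}|\simeq (1-r)+|t|$ together with $|t-\theta|\lesssim 1-r$ on $\Gamma_{1/2}(e^{i\theta})$. This gives $|1-z|\lesssim |\theta|+h$, which is the direction needed for the upper bound on $|f|$. I also need that the logarithm loses nothing in the splitting, using $L(h)\ge L(|\theta|)/C$ only where it is needed and the monotonicity of $x^{-p}L(x)^{-p}$. The argument does not really use $p>2$ beyond $p>1$ and $q>1$.
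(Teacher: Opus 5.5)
Your proof is correct and is essentially the paper's argument. The paper also splits exactly where $1-r=|\theta|$, using $|1-re^{i\theta}|\gtrsim 1-r$ for $r<1-\theta$, $|1-re^{i\theta}|\gtrsim\theta$ for $r\ge 1-\theta$, and the monotonicity of $x\log(e/x)$. It obtains the inner integral of size $|\theta|^{1-p}\log^{-p}(e/|\theta|)$ through explicit $e$-adic blocks rather than your slowly-varying argument, and it closes with $(p-1)q/p=1$ and $q>1$. The one real difference is that the paper works with the equivalent radial norm $\rho_{p,q}$ from \eqref{RMpq}, so the cone geometry you single out as the main obstacle never arises. Your version is instead self-contained from the definition.

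One slip needs fixing: in your last paragraph the direction is backwards. An upper bound on $|f|\le 1/(|1-z|\,L(|1-z|))$ requires the lower bound $|1-z|\gtrsim|\theta|+h$ on the cone, not $|1-z|\lesssim|\theta|+h$. This lower bound does follow from your two ingredients with a case split. First, $|1-z|\ge 1-|z|=h$ always. Second, let $C$ be the constant with $|t-\theta|\le Ch$ on the cone; if $|\theta|>2Ch$, then $|t|\ge|\theta|-Ch\ge|\theta|/2$, so $|1-z|\gtrsim|\theta|$. Hence $|1-z|\gtrsim\max\{h,|\theta|\}\simeq|\theta|+h$, and combined with the essential monotonicity of $x^{-p}L(x)^{-p}$ this gives the bound you use in Step 2. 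With that correction the argument is complete. Your remark that only $p,q>1$ are really needed is also correct.
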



On the other hand, one can check easily by means of the following result that the Hilbert operator is well defined when $1/p + 1/q <1$.
  
 \begin{proposition}\label{first term}
 	Let $1/p + 1/q <1$ and $n\in\N$. If $f(z)=\sum_{k\geq 0}a_k z^k \in AT^q_p$ then  
 		the series 
 		$$
 		\sum_{k\geq 0} \frac{a_k}{n+k+1}
 		$$
 		converges. Moreover, there is a positive constant $C=C(p,q)$ such that
 		$$
 		\sup_{n\in\N} \left| \sum_{k\geq 0} \frac{a_k}{n+k+1}\right|\leq C \|f\|_{AT_{p}^{q}}.
 		$$
 \end{proposition}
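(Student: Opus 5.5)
The plan is to show that the coefficient sums $\sum_k |a_k|/(n+k+1)$ are uniformly controlled by something like $\sum_k |a_k|/(k+1)$, and to bound the latter via a Hardy-type inequality. It is easier, though, to work directly with the integral representation. For a polynomial $f$ and $n\ge 0$ we have
$$
\sum_{k\ge0}\frac{a_k}{n+k+1}=\int_0^1 t^n f(t)\,dt ,
$$
so $\bigl|\sum_k a_k/(n+k+1)\bigr|\le \int_0^1|f(t)|\,dt$. The heart of the matter is therefore the estimate
\begin{equation*}
\int_0^1 |f(t)|\,dt \le C\,\|f\|_{AT^q_p},\qquad f\in AT^q_p,\ \tfrac1p+\tfrac1q<1 .
\end{equation*}
Convergence of the series for general $f$ would follow by applying the same reasoning to absolute values. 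I would replace $f$ by the majorant $g(z)=\sum_k |a_k|z^k$ only if needed. Alternatively, I would bound $\sum_k |a_k|/(k+1)$ through a Hardy--Littlewood type inequality for tent spaces. Since $\sum_k |a_k|/(n+k+1)\le\sum_k|a_k|/(k+1)$, this gives absolute convergence and the uniform bound at once.

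To prove the key integral estimate, I would use the pointwise growth of $AT^q_p$ functions together with subharmonicity. For $t\in[0,1)$, the disc $D(t,c(1-t))$ lies in $\Gamma_{1/2}(\xi)$ for every $\xi$ in an arc $I_t$ around $1$ of length comparable to $1-t$. By subharmonicity,
$$
|f(t)|^p\lesssim \frac{1}{(1-t)^2}\int_{D(t,c(1-t))}|f|^p\,dA\lesssim\frac{1}{1-t}\int_{\Gamma_{1/2}(\xi)\cap\{|z|\geq t'\}}|f|^p\frac{dA}{1-|z|^2},
$$
where $t'$ is comparable to the modulus cut-off. Averaging over $\xi\in I_t$ in the $L^{q/p}$ sense yields $|f(t)|\lesssim (1-t)^{-1/p-1/q}\,\|f\|_{AT^q_p}$. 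This pointwise bound is not enough on its own, because the exponent $1/p+1/q<1$ gives integrability of $(1-t)^{-(1/p+1/q)}$ on $[0,1)$. In fact it is enough, and that is exactly where the hypothesis enters: $\int_0^1(1-t)^{-1/p-1/q}\,dt<\infty$ precisely when $1/p+1/q<1$.

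The main obstacle is then making the pointwise estimate rigorous. This requires checking that the Euclidean disc of radius comparable to $1-t$ centred at $t$ lies in all the cones $\Gamma_{1/2}(\xi)$ with $|\xi-1|\lesssim 1-t$, and handling the passage from $\bigl(\int_\Gamma|f|^p\,dA/(1-|z|^2)\bigr)^{q/p}$ integrated over $\xi$ to the local bound. That passage uses only the facts that the inner integral over the disc is dominated by the inner cone integral for each $\xi\in I_t$ and that $|I_t|\asymp 1-t$; raising to the power $q/p$ and integrating over $I_t$ gives $(1-t)^{1+q/p}|f(t)|^q\lesssim\|f\|^q_{AT^q_p}$.

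Finally, for the identity and convergence for general $f$, I would apply the same pointwise argument to $t\mapsto |f(t)|$. This gives $f\in L^1([0,1))$ and dominated convergence of $\int_0^1 t^n f_r(t)\,dt$ for the dilations $f_r$. I would then have to identify $\lim_{N}\sum_{k\le N}a_k/(n+k+1)$ with $\int_0^1 t^nf(t)\,dt$. I would do this by an Abel-summation argument: $\sum_k a_k r^k/(n+k+1)=\int_0^1t^nf(rt)\,dt\to\int_0^1 t^nf(t)\,dt$, combined with a Tauberian step. If that step proves delicate, I would instead pass through the coefficient inequality $\sum_k|a_k|/(k+1)\lesssim\|f\|_{AT^q_p}$, obtained by comparing with the Bergman case \eqref{hardy's inequality Bergman} via an inclusion $AT^q_p\subset A^s$ for some $s>2$.
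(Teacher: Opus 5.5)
You establish the Abel sum of the series, but not the convergence the proposition asserts.

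Your key estimate is correct. The subharmonicity-and-cone argument gives $|f(t)|\lesssim(1-t)^{-1/p-1/q}\|f\|_{AT^q_p}$ (the big-$O$ form of Lemma~\ref{growth}). Hence $\int_0^1|f(t)|\,dt\lesssim\|f\|_{AT^q_p}$, and you get the Abel summability $\lim_{r\to1^-}\sum_k a_kr^k/(n+k+1)=\int_0^1t^nf(t)\,dt$. But this controls only the Abel sum. The real content of the proposition is convergence of $\sum_k a_k/(n+k+1)$ for a non-polynomial $f$, and you do not prove it in the stated range:
\begin{itemize}
\item Integrability of $f$ on $[0,1)$ does not by itself give convergence of the coefficient series.
\item The majorant $\sum_k|a_k|z^k$ need not lie in $AT^q_p$. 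These spaces are not solid; already $A^p$, $p>2$, is not.
\item Your Tauberian step is unspecified, and the standard hypothesis is unavailable. Littlewood's condition $a_k/(n+k+1)=O(1/k)$ means bounded $a_k$, but lacunary series such as $\sum_j2^{j/(2p)}z^{2^j}$ belong to $AT^q_p$ and have unbounded coefficients.
\item Your fallback via $AT^q_p\subset A^s$, $s>2$, and \eqref{hardy's inequality Bergman} does work when $p>2$; it is the argument of Proposition~\ref{Hmu=Imu}. But the proposition only assumes $1/p+1/q<1$, which allows $1<p\le 2$ (e.g.\ $p=2$, $q=3$). There the inclusion fails, since lacunary series in $A^p\setminus A^s$ lie in $AT^q_p$. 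The paper itself says it does not know whether $\sum_k|a_k|/(k+1)<\infty$ in that range.
\end{itemize}

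The missing idea is a Cauchy-type estimate for the partial sums, uniform in $N$. The paper writes $a_k/(n+k+1)=\int_{\mathbb{D}}f(z)\,\bar z^{k}|z|^{2n}\,dA(z)$, so that $\sum_{k=M}^{N}a_k/(n+k+1)=\int_{\mathbb{D}}f(z)\,|z|^{2n}\,\overline{(z^M-z^{N+1})/(1-z)}\,dA(z)$. The kernel is dominated by $2/|1-z|$. The tent-space H\"older inequality then gives $\int_{\mathbb{D}}|f(z)|\,|1-z|^{-1}\,dA(z)\lesssim\|f\|_{AT^q_p}\,\|(1-z)^{-1}\|_{AT^{q'}_{p'}}$. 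The last norm is finite precisely because $1/p'+1/q'>1$, i.e.\ $1/p+1/q<1$. Dominated convergence gives the Cauchy criterion, and the same domination (with $M=0$) bounds every partial sum uniformly in $n$ and $N$. Once convergence is secured this way, your route is a useful complement. Abel's theorem identifies the sum with $\int_0^1t^nf(t)\,dt$, and your growth estimate gives a duality-free proof of the bound.
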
 
  
The next step is to deal with the boundedness of the Hilbert operator. Our point of view is more general in the following sense. We consider the broad class of   Hankel matrices 
 \begin{equation}\label{Hm}
 	\H_{\mu}=(\mu_{n,k})_{n,k\geq 0},\,\,\,\,\,\,\text{where}\,\,\,\,\,\mu_{n,k}=\mu_{n+k},
 \end{equation}
introduced by the moments $\mu_n=\int_0^1 t^n d\mu(t), \, n=0,1,2, \dots$ of a finite positive Borel measure  $\mu$ on $[0,1)$. A matrix like that is known as generalized Hilbert matrix. It is evident that $\H_{\mu}\equiv \H$ when $\mu$ is  the Lebesgue measure on $[0,1)$. 

 Widom, in connection to the  Hamburger moment problem, 
   proved that  (\ref{Hm}) acts boundedly on $\ell^2$ if and only if the measure $\mu$ is a 1-Carleson Measure ($1-$CM) on $[0,1)$. We recall  that a positive Borel measure $\mu$ on $\D$ is an $1-$CM if 
  \begin{equation}\label{1CM on D}
  	\sup_{I\subseteq \partial \D} \frac{\mu(S(I))}{|I|} <\infty\,,
  \end{equation}
  where $S(I)=\{ z=|z|e^{i\theta}\in \D : 1-|I| <|z| <1, \theta \in I\}$ and the $|I|$ stands for the arc length of the arc $I \subseteq \partial D$. 
  The little-$o$ version of (\ref{1CM on D}) characterizes the measure as 1-Vanishing Carleson measure (1-VCM).
  In the case of a measure $\mu$ supported on $[0,1)$, condition (\ref{1CM on D}) is stated as 
  \begin{equation}\label{1CM}
  	\sup_{t \in [0,1)} \frac{\mu([t,1))}{(1-t)} <\infty\,,
  \end{equation}
  or equivalently as 
  \begin{equation}\label{growth of the moments}
  	\mu_n = O(1/n) <\infty ,
  \end{equation}
(see \cite{Widom}).
In \cite{Power}, Power extended the results of Widom to the case of a complex measure $\mu $ on $\D$.
  
  Thinking the same way as before only now using the $\H_{\mu}$ of a positive Borel measure $\mu$ on $[0,1)$,
   we can define on the analytic polynomials $f(z)=\sum_{k\geq 0} a_k z^k $ the  operator 
\begin{equation}\label{Hm(f)}
	\H_{\mu}(f)(z)=\sum_{n=0}^{\infty}\left(\sum_{k=0}^{\infty}
	\mu_{n+k} a_k \right)z^n\,, \quad z\in \mathbb D\,.
\end{equation}
 We will refer to (\ref{Hm(f)}) as the generalized Hilbert operator. 
 On top of that, for such an $f$, the series represents an analytic function in $\D$ and can be identified to a well defined integral operator as 
 \begin{equation}\label{identity Hm}
 	\H_{\mu}(f)(z)=\sum_{n=0}^{\infty}\left(\sum_{k=0}^{\infty}
 	\mu_{n+k} a_k \right)z^n=\int_0^1 \frac{f(t)}{1-tz} \,d\mu(t)=:\mathcal{I}_\mu, \quad z\in \mathbb D\,.
 \end{equation}
In addition, a Carleson measure assumption on the measure provides the following:
 \begin{proposition}\label{Hmu=Imu}
	Let an $f\in AT^q_p$, $\frac{1}{p}+\frac{1}{q}<1$ and $p>2.$
	If $\mu$ is an  $1$-CM on $[0,1)$ then
	\begin{itemize}
		\item[(i)] for each $n\in \N$
		$$
		\sum_{k\geq 0} \mu_{n+k} |a_k| < \infty
		$$
		\item[(ii)]
		the power series 
		\begin{equation*}
			\mathcal H_{\mu}(f)(z)=\sum_{n=0}^{\infty}\left(\sum_{k=0}^{\infty}
			\mu_{n+k} a_k \right)z^n, \quad z\in \mathbb D\,
		\end{equation*}
		represents an analytic function in the unit disc. 
		\item[(iii)]	
		\begin{equation*}
			\H_{\mu}(f)(z)= \mathcal{I}_{\mu}(f)(z)=\int_0^1\frac{f(t)}{1-tz}d\mu(t),\, z\in \D\,.
		\end{equation*}		
	\end{itemize}
\end{proposition}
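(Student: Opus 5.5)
The plan is to reduce everything to a Hardy-type coefficient inequality and a pointwise growth estimate for $AT^q_p$ functions near the boundary, and then handle the three items in order.

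\textbf{Item (i).} Since $\mu$ is a $1$-CM on $[0,1)$, (\ref{growth of the moments}) gives $\mu_{n+k}\le C/(n+k+1)\le C/(k+1)$ for all $n,k$. It therefore suffices to prove
\begin{equation*}
\sum_{k\geq 0}\frac{|a_k|}{k+1}\le C\|f\|_{AT^q_p},
\end{equation*}
which is the tent-space analogue of (\ref{hardy's inequality Bergman}).

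Proposition \ref{first term} only gives convergence and uniform boundedness of $\sum_k a_k/(n+k+1)$. It does not give absolute convergence, so I will argue directly. The plan is to use the embedding of $AT^q_p$ into a Bergman space $A^s$ with $s>2$. By Hölder/Fubini-type arguments, $AT^q_p\subset A^{\min(p,q)}$ when $p$ and $q$ are comparable. More robustly, I would use the standard inclusions $AT^q_p\subseteq AT^{q}_{\min(p,q)}$ and $AT^q_q=A^q$, together with the known estimate behind \cite[Lemma 4.1]{NowPav}.

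If $\min(p,q)>2$, then (\ref{hardy's inequality Bergman}) applies directly. The delicate case is $q\le 2<p$ with $1/p+1/q<1$. There I would instead prove the inequality from scratch. Write $|a_k|\le r^{-k}M_1(r,f)$ and use the dyadic blocks $k\in[2^j,2^{j+1})$ with $r_j=1-2^{-j}$. This yields
\begin{equation*}
\sum_k\frac{|a_k|}{k+1}\lesssim \int_0^1 M_1(r,f)\,\frac{dr}{1-r}\cdot(\text{correction}).
\end{equation*}
More precisely, I would apply the Hardy–Littlewood type bound $\sum_k |a_k|/(k+1)\lesssim \int_0^1 M_1(r,f)\,dr$, obtained from Hardy's inequality (\ref{hardy's inequality}) applied to $f_r$ and integrated in $r$.

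It then remains to show $\int_0^1 M_1(r,f)\,dr\lesssim\|f\|_{AT^q_p}$. I would write $M_1(r,f)$ via the cone average: for $|z|=r$, subharmonicity gives $|f(z)|^p\lesssim (1-r)^{-2}\int_{\Delta(z)}|f|^p\,dA$. Then Fubini over $\xi\in\T$ gives
\begin{equation*}
\int_0^1 M_1(r,f)\,dr\lesssim \int_\T\int_{\Gamma(\xi)}|f(w)|\frac{dA(w)}{1-|w|}\,|d\xi|.
\end{equation*}
Hölder in the inner integral gives the bound
\begin{equation*}
\Big(\int_\Gamma |f|^p\frac{dA}{1-|w|}\Big)^{1/p}\Big(\int_\Gamma \frac{dA}{1-|w|}\Big)^{1/p'}.
\end{equation*}
The second factor diverges logarithmically, so this naive approach fails. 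This is the main obstacle.

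To overcome it I would insert a weight, using $\int_\Gamma (1-|w|)^{\varepsilon p'-1}dA<\infty$. That requires controlling $\int_\Gamma|f|^p(1-|w|)^{-\varepsilon p-1}dA$, which is not available. Alternatively, I would exploit the outer $L^q$ integrability by using the pointwise estimate $|f(z)|\lesssim \|f\|_{AT^q_p}(1-|z|)^{-1/p-1/q}$. Since $1/p+1/q<1$, this gives $|f(t)|\lesssim (1-t)^{-\gamma}$ with $\gamma<1$.

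I would combine this with the splitting $f=\sum$ of dyadic pieces in the spirit of \cite{NowPav}. Alternatively, I would cite the paper's own argument for Proposition \ref{first term} if it actually yields absolute convergence. Honestly, I expect the absolute-summability estimate in the mixed range to be the crux.

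\textbf{Item (ii).} Once (i) holds with the bound $\sum_k\mu_{n+k}|a_k|\le C\sum_k|a_k|/(k+1)=:B$ uniformly in $n$, the coefficients of $\H_\mu(f)$ are bounded by $B$. Hence the series converges on $\D$ and is analytic there.

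\textbf{Item (iii).} By the pointwise growth bound, $|f(t)|\lesssim(1-t)^{-\gamma}$ with $\gamma=1/p+1/q<1$. For fixed $z$, $|1-tz|\ge 1-|z|$. Since $\mu([t,1))\lesssim 1-t$, we get $\int_0^1(1-t)^{-\gamma}d\mu<\infty$ by integration by parts. So $\I_\mu(f)(z)$ converges absolutely.

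To identify the two expressions, expand $\frac{1}{1-tz}=\sum_n t^nz^n$ and $f(t)=\sum_k a_kt^k$. The double sum $\sum_{n,k}|a_k||z|^n\int t^{n+k}d\mu$ equals $\sum_n|z|^n\sum_k\mu_{n+k}|a_k|\le B/(1-|z|)<\infty$ by (i). Fubini–Tonelli therefore justifies interchanging sums and integral, which gives $\I_\mu(f)(z)=\H_\mu(f)(z)$.
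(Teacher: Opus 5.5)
\textbf{There is a genuine gap in item (i), in the range $q\le 2<p$.} Items (ii) and (iii) are correct once (i) is known. Item (i) is also fine when $\min(p,q)>2$, via $AT^q_p\subset A^{\min(p,q)}$ and (\ref{hardy's inequality Bergman}). The range $q\le 2<p$ is exactly where the statement goes beyond the Bergman case, and your proposed route there cannot work, for the following reasons.

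\emph{The key inequality is false.} The bound $\sum_k|a_k|/(k+1)\lesssim\int_0^1M_1(r,f)\,dr$ fails. Integrating Hardy's inequality $\sum_k|a_k|r^k/(k+1)\le\pi M_1(r,f)$ in $r$ only gives $\sum_k|a_k|/(k+1)^2$. Moreover, since $M_1(r,f)$ is nondecreasing, $\int_0^1M_1(r,f)\,dr\le 2\|f\|_{A^1}\le 2\|f\|_{A^2}$. But $f(z)=\sum_k z^k/\log(k+2)$ lies in $A^2$ and has $\sum_k|a_k|/(k+1)=\infty$.

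\emph{The obstacle you diagnose is not the real one.} We have $\int_{\Gamma_{1/2}(\xi)}dA(w)/(1-|w|)<\infty$, because the cone's cross-section at height $r$ has length $\asymp 1-r$. So $\int_0^1M_1(r,f)\,dr\lesssim\|f\|_{AT^q_p}$ is actually easy; the quantity is simply too weak to control $\sum_k|a_k|/(k+1)$.

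\emph{The fallback also fails.} The pointwise bound $|f(z)|\lesssim(1-|z|)^{-\gamma}$ with $\gamma=1/p+1/q$ cannot suffice on its own. In this range $\gamma>1/2$. Let $P_N$ be the Rudin--Shapiro polynomial of degree $N-1$, with $\pm1$ coefficients and $\|P_N\|_\infty\lesssim\sqrt N$. The block sum $\sum_j 2^{j(\gamma-1/2)}z^{2^j}P_{2^j}(z)$ satisfies this growth bound, while $\sum_k|a_k|/(k+1)\asymp\sum_j 2^{j(\gamma-1/2)}=\infty$.

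\textbf{The missing ingredient is an embedding $AT^q_p\subset A^{q_0}$ for some $q_0>2$.} The paper takes this from \cite[Theorem 3.3]{AgConRodr}. It then applies (\ref{hardy's inequality Bergman}) to get $\sum_k|a_k|/(k+1)\lesssim\|f\|_{A^{q_0}}\lesssim\|f\|_{AT^q_p}$. After that, your arguments for (ii) and (iii) go through unchanged.

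In the range $q<p$, which contains your delicate case, you can also obtain the embedding from Theorem~\ref{LAG}, case 2(c), with $d\mu=dA$. Since $dA(R_k)\asymp(1-|z_k|)^2$, the condition reduces to boundedness of $(1-|z_k|)^{2-s(1/p+1/q)}$. Hence $AT^q_p\subset A^s$ for $q<s\le 2/(1/p+1/q)$, and $2/(1/p+1/q)>2$ precisely because $1/p+1/q<1$.
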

Note that, for the concrete case of the Lebesgue measure on $[0,1)$, case $(i)$ above implies that if $f(z)=\sum_{k\geq 0} a_k z^k \in AT^q_p$, $\frac{1}{p}+\frac{1}{q}<1$ and $p>2$, then  
 $$\sum_{k\geq 0} \frac{|a_k|}{k+1}$$  
 converges. Nevertheless, when $\frac{1}{p}+\frac{1}{q}<1$ we can verify the convergence of the series $\sum_{k\geq 0} \frac{a_k}{k+1}$ (see Proposition~\ref{first term}). It is worth noting that, although we know when the series  converges, it is not clear to us if the absolute convergence of the series  remains true when $1<p \leq 2$.\newline

The integral operator
\begin{equation}\label{Im(f)}
	\I_{\mu}(f)(z)=\int_0^1\frac{f(t)}{1-tz}d\mu(t), \quad z\in \mathbb D,
\end{equation}
results to be the key tool of our work.
 Several researchers have contributed to the study of $\H_{\mu}$ and $\I_\mu$  on classical function spaces \cite{Ch-Gi-Pe,Ga-Pe2010,GM1,GM2,GM3}. 
 
 Our first concern is the well definition of $\mathcal I_{\mu}$ independently of $\mathcal H_{\mu}$. In Section 3, we prove the following:
 \begin{proposition}\label{well definition of Imu}
 	Let $1<p,q <\infty$ and ${1/p+1/q <1}$ then 
 	$\mathcal I_{\mu}$ is well defined  on $AT^q_p$ if and  only if 
 	 \begin{equation}\label{final}
 			\sum_{n\geq 0}\, \left[\mu\left(\left[1-\frac{1}{2^n}, 1-\frac{1}{2^{n+1}}\right)\right) 2^{n(1/p+1/q)}\right]^{q'} \,<\,\infty ,
 		 \end{equation}
 	 where $1/q+1/q' =1$.
 \end{proposition}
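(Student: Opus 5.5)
The statement reduces to the question of when $\int_0^1 |f(t)|\,d\mu(t)<\infty$ holds for every $f\in AT^q_p$. Indeed, $\mathcal I_\mu(f)(z)$ is defined for all $z\in\D$ exactly when $\int_0^1 |f(t)|/|1-tz|\,d\mu(t)<\infty$, and $1\le |1-tz|^{-1}\le (1-|z|)^{-1}$. By the closed graph theorem, finiteness for every $f$ is equivalent to the functional bound $\int_0^1|f|\,d\mu\le C\|f\|_{AT^q_p}$. So the plan is to show that this bound holds if and only if (\ref{final}) holds.

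For sufficiency, write $I_n=[1-2^{-n},1-2^{-n-1})$ and $\mu_n^*=\mu(I_n)$. I first need a pointwise estimate on a whole dyadic interval. For $t\in I_n$, the pseudo-hyperbolic disc $\Delta_t$ of fixed small radius centred at $t$ lies inside $\Gamma_{1/2}(\xi)$ for every $\xi$ in an arc $J_n$ around $1$ of length $\asymp 2^{-n}$. Subharmonicity then gives
\[
|f(t)|^p\lesssim 2^{2n}\int_{\Delta_t}|f|^p\,dA\lesssim 2^{n}\int_{\Gamma_{1/2}(\xi)}|f|^p\frac{dA}{1-|z|^2},\qquad \xi\in J_n.
\]
Raising to the power $q/p$ and averaging over $\xi\in J_n$ yields
\[
\sup_{I_n}|f|\lesssim 2^{n(1/p+1/q)}\Bigl(\int_{J_n}A_n(\xi)^{q/p}|d\xi|\Bigr)^{1/q},
\]
where $A_n(\xi)$ is the integral of $|f|^p\,dA/(1-|z|^2)$ over $\Gamma_{1/2}(\xi)$ restricted to a neighbourhood of $I_n$.

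The remaining task in this direction is to sum over $n$. The neighbourhoods of the $I_n$ overlap only boundedly, the arcs $J_n$ are nested, and $q/p\ge 1$ (we may assume $p\le q$ after adjusting; otherwise use Minkowski). Under these conditions I expect
\[
\sum_n \int_{J_n} A_n^{q/p}\,|d\xi| \le \int_\T\Bigl(\sum_n A_n\Bigr)^{q/p}|d\xi|\lesssim\|f\|^q_{AT^q_p},
\]
using superadditivity of $x\mapsto x^{q/p}$. Hölder's inequality with exponents $q',q$ then gives
\[
\int_0^1|f|\,d\mu\le\sum_n\mu_n^*\sup_{I_n}|f|\lesssim \Bigl\|\bigl(\mu_n^* 2^{n(1/p+1/q)}\bigr)_n\Bigr\|_{\ell^{q'}}\,\|f\|_{AT^q_p},
\]
which is the required bound.

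The case $p>q$ is where I expect the main obstacle. There the superadditivity fails, and one would instead have to dominate $\sum_n A_n$ by the full tent integral in a more careful way. My first attempt is to work on the single arc structure: all $\xi\in J_n$ see $I_n$, and $\sum_n \mathbf 1_{J_n}(\xi)A_n(\xi)\le A(\xi)$, which may already suffice via the pointwise inequality.

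For necessity, I would test the bound against the functions
\[
f=\sum_n c_n\,2^{n(1/p+1/q)}(1-2^{-n})^{\cdot}\,\frac{2^{-n\gamma}}{(1-r_nz)^{\gamma}},\qquad r_n=1-2^{-n},
\]
with $\gamma$ large and $c_n\ge0$. Each summand is positive and of size $\asymp c_n 2^{n(1/p+1/q)}$ on $I_n$, so positivity on $[0,1)$ prevents cancellation. On the other side, the known atomic-type estimate for such sums in $AT^q_p$ gives $\|f\|_{AT^q_p}\lesssim\|(c_n)\|_{\ell^q}$; this holds because the kernels are concentrated at a single boundary point and the geometric decay makes the tent norm behave like an $\ell^q$ sum. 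Consequently
\[
\sum_n c_n\,\mu_n^*\,2^{n(1/p+1/q)}\lesssim\|(c_n)\|_{\ell^q},
\]
and duality of $\ell^q$ with $\ell^{q'}$ yields (\ref{final}). Proving this $\ell^q$ upper bound for the test sum is the second delicate step. I would handle it by computing the cone integral over $\Gamma_{1/2}(\xi)$ for $|\xi-1|\asymp 2^{-m}$ and showing it is dominated by geometric sums concentrated at $n\approx m$.
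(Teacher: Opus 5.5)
Your route differs from the paper's. After the same reduction to the embedding $\int_0^1|f|\,d\mu\le C\|f\|_{AT^q_p}$, the paper simply quotes Luecking's theorem (Theorem~\ref{LAG} with $s=1<p,q$, case (a)). It then observes that for $\mu$ on $[0,1)$ the resulting $T^{q'}_{p'}(Z)$ condition collapses to (\ref{final}). You instead reprove this special case by hand.

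Your reduction is fine. The paper sets $z=0$ and uses uniform boundedness; your closed graph argument works equally well, but note $|1-tz|^{-1}>1/2$, not $\ge 1$.

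Your necessity argument is sound in outline:
\begin{itemize}
\item positivity of the test sum on $[0,1)$ gives $\int_0^1 f\,d\mu\gtrsim\sum_n c_n\mu(I_n)2^{n(1/p+1/q)}$;
\item the atomic decomposition of $AT^q_p$, with coefficients $\lambda_n=c_n2^{n/q}$ at the points $r_n$, gives $\|f\|^q_{AT^q_p}\lesssim\sum_m 2^{-m}(\sum_{n\le m}c_n^p2^{np/q})^{q/p}$;
\item a discrete Hardy inequality bounds this by $\|(c_n)\|_{\ell^q}^q$.
\end{itemize}
The cone integrals are not literally concentrated at $n\approx m$, but the weighted sum is controlled.

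The genuine gap is sufficiency when $p>q$, which you leave open. Minkowski does not help there. For $q/p<1$ the map $x\mapsto x^{q/p}$ is subadditive, so the inequality you need goes the wrong way.

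Your pointwise idea also fails. From $\sum_n\mathbf{1}_{J_n}(\xi)A_n(\xi)\le A(\xi)$ you cannot get $\sum_n\mathbf{1}_{J_n}(\xi)A_n(\xi)^{q/p}\lesssim A(\xi)^{q/p}$. If $A_n(\xi)\asymp 1$ for the roughly $m$ indices with $\xi\in J_n$, the left side is $\asymp m$ while the right side may be only $\asymp m^{q/p}$.

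The missing observation is that no pointwise bound is needed. Put $N_n=\bigcup_{t\in I_n}\Delta_t$.
\begin{itemize}
\item Your choice of $J_n$ already forces $N_n\subset\Gamma_{1/2}(\xi)$ for every $\xi\in J_n$.
\item Hence $A_n=\int_{N_n}|f|^p\,dA/(1-|z|^2)$ does not depend on $\xi\in J_n$, and $A_n\le A(\xi)$ there, where $A(\xi)$ is the full cone integral.
\item Take $J_n$ to be the arc centred at $1$ of length $c2^{-n}$. The sets $J_n\setminus J_{n+1}$ are then pairwise disjoint, and $|J_n|=2|J_n\setminus J_{n+1}|$.
\end{itemize}
Therefore
\[
\sum_n\int_{J_n}A_n^{q/p}\,|d\xi| = 2\sum_n |J_n\setminus J_{n+1}|\,A_n^{q/p} \le 2\sum_n\int_{J_n\setminus J_{n+1}}A(\xi)^{q/p}\,|d\xi| \le 2\,\|f\|^q_{AT^q_p}
\]
for every $1<p,q<\infty$. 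Combined with your subharmonicity estimate $\sup_{I_n}|f|\lesssim 2^{n/p}A_n^{1/p}$ and Hölder, this gives $\int_0^1|f|\,d\mu\lesssim\|(\mu(I_n)2^{n(1/p+1/q)})_n\|_{\ell^{q'}}\|f\|_{AT^q_p}$. This closes the case $p>q$, and it makes the bounded-overlap and superadditivity step unnecessary even when $p\le q$.
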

 
 Bringing in mind the study of $\mathcal I_{\mu}$ on
 other spaces of analytic functions in the unit disc, one could come to the conclusion  that the measures carrying property (\ref{final}) are exactly those that imply boundedness. However, this not the case for the analytic tent spaces, not  all measures that satisfy (\ref{final}) introduce $\mathcal I_{\mu}$ 
 as bounded operator. We are able to prove the following theorem.
 \begin{theorem}
 \label{Theorem I}
 	Let $1<p,q<\infty$ with $\frac{1}{p}+\frac{1}{q}<1$ and let $\mu$ be a finite positive Borel measure on $[0,1)$. The following assertions hold:
 	\begin{itemize}
 		\item[(i)] The operator  $\mathcal{I}_{\mu}$  is bounded on $AT^q_p$ if and only if $\mu$ is an $1$-CM.
 		\item[(ii)] The operator  $\mathcal{I}_{\mu}$ is compact on $AT^q_p$ if and only if $\mu$ is an $1$-VCM. 
 	\end{itemize}
 \end{theorem}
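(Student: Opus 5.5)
Both directions of (i) rest on two ingredients: a pointwise/radial estimate for functions in $AT^q_p$, and duality of tent spaces. The dual of $AT^q_p$ under the $A^2$ pairing is $AT^{q'}_{p'}$ (with $1/q+1/q'=1$). Writing $\langle \I_\mu f, g\rangle$ for $g\in AT^{q'}_{p'}$ polynomial, Fubini gives
\begin{equation*}
\langle \I_\mu f,g\rangle=\int_0^1 f(t)\,\overline{g(t)}\,d\mu(t),
\end{equation*}
since $\int_{\D}\frac{\overline{g(z)}}{1-t\bar z}\,dA(z)$ reproduces $\overline{g(t)}$ (the kernel $1/(1-t\bar z)$ being the Cauchy/Szeg\H{o} kernel, one should use the $H^2$-type pairing or adjust by $(1-t\bar z)^{-2}$ via differentiation; I would choose the pairing $\langle F,G\rangle=\lim_{r\to1}\int_{\T}F(r\xi)\overline{G(r\xi)}|d\xi|$ for which the dual of $AT^q_p$ is still a tent space of derivatives, or equivalently test against $(1-t\bar z)^{-1}$ directly). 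So boundedness of $\I_\mu$ is equivalent to the bilinear estimate
\begin{equation*}
\int_0^1 |f(t)|\,|g(t)|\,d\mu(t)\lesssim \|f\|_{AT^q_p}\|g\|_{X},
\end{equation*}
$X$ being the appropriate dual space.

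\emph{Sufficiency.} Assuming $\mu$ is a $1$-CM, I would estimate the product on $[0,1)$ dyadically. Let $I_n=[1-2^{-n},1-2^{-n-1})$. Subharmonicity over a hyperbolic disc $\Delta_n$ around $I_n$ gives
\begin{equation*}
\sup_{I_n}|f|^p\lesssim 2^{2n}\int_{\Delta_n}|f|^p\,dA.
\end{equation*}
The key is that $\Delta_n\subset\Gamma_{1/2}(\xi)$ for all $\xi$ in an arc $J_n$ of length $\asymp 2^{-n}$ around $1$. For $\xi\in J_n$, set
\begin{equation*}
A_f(\xi)=\Big(\int_{\Gamma(\xi)}|f|^p\,\frac{dA}{1-|z|}\Big)^{1/p}.
\end{equation*}
Then $\sup_{I_n}|f|\lesssim 2^{n/p}\Big(\int_{\Delta_n}|f|^p\,\frac{dA}{1-|z|}\Big)^{1/p}\cdot 2^{n/p}$ after the correct bookkeeping. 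Using $\mu(I_n)\lesssim 2^{-n}$ and Hölder in $n$, I would reduce to a discrete Carleson-type embedding of the form $\sum_n 2^{-n}\sup_{I_n}|f|\,\sup_{I_n}|g|$. This is controlled by $\int_{\T}A_f\,A_g\,|d\xi|$ through a Calderón--Zygmund / tent-space atomic decomposition argument, or more directly through the known Carleson embedding for tent spaces with the non-tangential region trick (averaging over $\xi\in J_n$).

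\emph{Necessity.} I would test on $f_a(z)=(1-a)^{1-1/p-1/q}/(1-az)$ with $a\in(0,1)$, whose $AT^q_p$ norm is bounded (a computation like \cite[Example 2.3]{AgConRodr}, valid since $1/p+1/q<1$). Then
\begin{equation*}
\I_\mu f_a(a)\ge (1-a)^{1-1/p-1/q}\int_a^1\frac{d\mu(t)}{(1-at)^2}\gtrsim (1-a)^{-1-1/p-1/q}\,\mu([a,1)),
\end{equation*}
while the pointwise estimate $|F(a)|\lesssim (1-a)^{-1/p-1/q}\|F\|_{AT^q_p}$ (from the subharmonicity argument above) gives $\mu([a,1))\lesssim 1-a$.

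For (ii), compactness is obtained as usual: truncate $\mu_r=\mu|_{[0,r]}$, which gives a compact $\I_{\mu_r}$, and note that $\|\I_\mu-\I_{\mu_r}\|\lesssim \sup_{t>r}\mu([t,1))/(1-t)$ by the quantitative form of sufficiency. Conversely, the $f_a$ above tend to $0$ weakly as $a\to1$, so $\mu([a,1))/(1-a)\lesssim\|\I_\mu f_a\|\to0$. The main obstacle is the sufficiency estimate, which needs a genuine tent-space Carleson embedding on the radius rather than the Bergman-space Fubini argument.
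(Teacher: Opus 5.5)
Your necessity argument in (i) is correct and simpler than the paper's. Evaluating $\mathcal{I}_\mu f_a$ at $a$ and using the growth bound $|F(a)|\lesssim (1-a)^{-1/p-1/q}\|F\|_{AT^q_p}$ replaces the paper's duality pairing of $\mathcal{I}_\mu f_\alpha$ against $g_\alpha=(1-\alpha)^{1/p+1/q}(1-\alpha z)^{-2}$. Your treatment of (ii) is also cleaner than the paper's sequential argument: weak nullity of $f_a$ in the reflexive space $AT^q_p$, and norm approximation of $\mathcal{I}_\mu$ by the compact operators $\mathcal{I}_{\mu|_{[0,r]}}$. But it works only \emph{provided} you have sufficiency in (i) in the quantitative form $\|\mathcal{I}_\nu\|\lesssim\sup_{t}\nu([t,1))/(1-t)$.

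That sufficiency is the heart of the theorem, and you do not prove it; the mechanism you indicate also fails as stated. Under the $A^2$ pairing, for which $(AT^q_p)^*\simeq AT^{q'}_{p'}$ is available, one gets $\langle \mathcal{I}_\mu f,g\rangle=\int_0^1 f(t)\overline{G(t)}\,d\mu(t)$ with $G(t)=\frac1t\int_0^t g(s)\,ds$, not $g(t)$. Your space $X$ is exactly the set of such $G$. This averaging cannot be discarded. For a general $g\in AT^{q'}_{p'}$ the estimate $\sum_n 2^{-n}\sup_{I_n}|f|\,\sup_{I_n}|g|\lesssim\int_{\T}A_fA_g\,|d\xi|\le \|f\|_{AT^q_p}\|g\|_{AT^{q'}_{p'}}$ is false. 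For example, $f=(1-z)^{-a}$ and $g=(1-z)^{a-1}$ with $0<a<\frac1p+\frac1q$ lie in the two spaces, yet $\int_0^1|fg|\,dt=\infty$. Symptomatically, the hypothesis $\frac1p+\frac1q<1$ never enters your sufficiency sketch, even though $\mathcal{I}_\mu$ is not even defined for Lebesgue measure when $\frac1p+\frac1q\ge 1$.

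The missing idea is a Hardy-type inequality that exploits the averaging in $G$ together with $\frac1p+\frac1q<1$. The paper proceeds as follows:
\begin{itemize}
\item H\"older with weights $(1-t)^{\pm q/p}$;
\item Luecking's theorem, giving $\int_0^1|f|^q(1-t)^{q/p}\,d\mu\lesssim\|f\|_{AT^q_p}^q$;
\item an integration by parts replacing $d\mu$ by $dt$ in the $G$-factor;
\item the classical Hardy inequality, which applies exactly because $q'<p$.
\end{itemize}
Your dyadic route can also be completed. Take the arcs $E_n=\{e^{i\theta}:2^{-n-1}\le|\theta|<2^{-n}\}$; for $\xi\in E_n$, all $\Delta_m$ with $m\le n$ lie in a cone at $\xi$ of fixed, sufficiently large aperture. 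Subharmonicity then shows that $b_n=2^{-n(\frac1p+\frac1q)}\sup_{I_n}|f|$ lies in $\ell^q$ and $c_m=2^{-m(\frac1{p'}+\frac1{q'})}\sup_{I_m}|g|$ lies in $\ell^{q'}$, with norms $\lesssim\|f\|_{AT^q_p}$ and $\lesssim\|g\|_{AT^{q'}_{p'}}$ respectively. Next, $\sup_{I_n}|G|\lesssim\sum_{m\le n}2^{-m}\sup_{I_m}|g|$, so
\[
\sum_n 2^{-n}\sup_{I_n}|f|\,\sup_{I_n}|G|\lesssim\sum_n b_n\sum_{m\le n}2^{-(n-m)(1-\frac1p-\frac1q)}c_m\lesssim\|b\|_{\ell^q}\|c\|_{\ell^{q'}}.
\]
The kernel is summable precisely because $\frac1p+\frac1q<1$. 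Incidentally, the subharmonicity bound is $\sup_{I_n}|f|\lesssim 2^{n/p}\big(\int_{\Delta_n}|f|^p\frac{dA}{1-|z|}\big)^{1/p}$, with a single factor $2^{n/p}$, not two.
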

 Notice that being a measure $1-CM$ on $[0,1)$ then condition (\ref{final}) is fulfilled. Conversely, there are measures that satisfy (\ref{final}) yet are not $1-CM$ on $[0,1)$. A typical example is the $d\mu(t)= \log \frac{1}{1-t}dt$.
 
 In relation to the generalized Hilbert operator the study of $\mathcal I_{\mu}$ leads to the following.

  \begin{theorem}\label{about Hm}
 	Let $1<p,q<\infty$ such that $\frac{1}{p}+\frac{1}{q}<1$ and $p>2$.
 	\begin{itemize}
 		\item[(i)] The operator $\mathcal{H}_{\mu}$ is well defined and  bounded on $AT^q_p$ if and only if $\mu$ is an $1$-CM.
 		\item[(ii)] The operator  $\mathcal{H}_{\mu}$ is compact on $AT^q_p$ if and only if $\mu$ is an $1$-VCM. 
 	\end{itemize}
 \end{theorem}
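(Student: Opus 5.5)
The plan is to reduce Theorem~\ref{about Hm} to Theorem~\ref{Theorem I} by means of Proposition~\ref{Hmu=Imu}. The two directions are handled separately.

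\emph{Sufficiency.} Suppose $\mu$ is a $1$-CM on $[0,1)$. Proposition~\ref{Hmu=Imu} gives, for every $f\in AT^q_p$ with $\frac1p+\frac1q<1$ and $p>2$, three facts. Each coefficient series $\sum_k\mu_{n+k}a_k$ converges absolutely. The power series $\H_\mu(f)$ defines an analytic function in $\D$. Finally $\H_\mu(f)=\I_\mu(f)$. So $\H_\mu$ is well defined on $AT^q_p$ and coincides with $\I_\mu$ there. Theorem~\ref{Theorem I}(i) then gives boundedness. If $\mu$ is moreover a $1$-VCM, it is in particular a $1$-CM, so the identity still holds, and Theorem~\ref{Theorem I}(ii) gives compactness.

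\emph{Necessity.} Suppose $\H_\mu$ is well defined and bounded on $AT^q_p$. Here we cannot use Proposition~\ref{Hmu=Imu}, since it assumes the Carleson condition. I would work on analytic polynomials instead, where (\ref{identity Hm}) always holds, so $\H_\mu=\I_\mu$ on a dense subclass. Polynomials are dense in $AT^q_p$ for finite $p,q$, since dilations $f_r$ converge to $f$ in norm.

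The cleanest route is to go back into the proof of Theorem~\ref{Theorem I}(i). There the necessity of the $1$-CM condition is presumably obtained by testing $\I_\mu$ on a family of functions such as $f_a(z)=(1-a^2)^{\gamma}/(1-az)^{\gamma+1/p+1/q}$, $a\in[0,1)$, which have uniformly bounded $AT^q_p$ norms. For $a,t,z$ on the segment $[a,1)$ one has positivity, $1-at\simeq 1-a$ and $1-tz\simeq1-a$. This yields $|\I_\mu(f_a)(z)|\gtrsim \mu([a,1))(1-a)^{-1-1/p-1/q}$ on a set of the form $\{z\in\Gamma_{1/2}(\xi):|z-a|<c(1-a)\}$ with $\xi$ ranging over an arc of length $\simeq1-a$. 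Combining this with the norm bound gives $\mu([a,1))\lesssim 1-a$.

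To transfer this to $\H_\mu$, I would replace $f_a$ by its Taylor polynomials, or by the dilations $f_{a,r}$, $r\to1$, which are polynomial-approximable. On these $\H_\mu=\I_\mu$. Their norms are uniformly bounded, so $\|\I_\mu(f_{a,r})\|\le\|\H_\mu\|\,C$. Since the coefficients of $f_a$ are positive, Fatou's lemma, or monotone convergence in $\int_0^1 f_{a,r}(t)(1-tz)^{-1}d\mu(t)$ for $z\in[0,1)$, lets $r\to1$ while keeping the lower bound. This yields the $1$-CM condition. For compactness, a compact operator is bounded, so $\mu$ is a $1$-CM, hence $\H_\mu=\I_\mu$ on all of $AT^q_p$. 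Theorem~\ref{Theorem I}(ii) then gives the $1$-VCM condition.

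The main obstacle is this necessity step. We only know $\H_\mu$ as a coefficient-defined bounded operator, and we must justify passing from polynomials to test functions without already knowing $\mu$ is Carleson. Positivity of all coefficients and of $\mu$ is what makes the monotone limit work. If that fails, an alternative is to use well-definedness directly: taking $n=0$ and the $f$ above, the convergence of $\sum_k\mu_k a_k$ forces the condition of Proposition~\ref{well definition of Imu}, and one would then bootstrap. I would try the monotone-limit argument first.
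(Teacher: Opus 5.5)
Your proposal is correct. Sufficiency is exactly the paper's argument (Proposition~\ref{Hmu=Imu} plus Theorem~\ref{Theorem I}), but your necessity argument takes a different route.

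\textbf{The paper's route.} It never passes to $\I_\mu$ for necessity. It pairs $\H_\mu(f_\alpha)$ with $f_\alpha$ itself, where $f_\alpha(z)=(1-\alpha)^{1/p'+1/q'}(1-\alpha z)^{-2}$, in the $A^2$ duality. It expands both in Taylor coefficients, which are all nonnegative, and uses $\mu_{n+k}\ge\alpha^{n+k}\mu([\alpha,1))$ against $\|f_\alpha\|_{AT^{q'}_{p'}}\lesssim(1-\alpha)^{2/p'+2/q'-2}$. For compactness it reruns the same computation along $\alpha_n\to1$, using $\|\H_\mu(f_n)\|\to0$.

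\textbf{Your route.} You identify $\H_\mu(f_a)$ with $\I_\mu(f_a)$ and bound the latter from below. For compactness you reduce to Theorem~\ref{Theorem I}(ii) once part (i) has produced the Carleson condition.

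\textbf{The obstacle you flag is not there.} For fixed $a<1$, your $f_a$ is holomorphic in $|z|<1/a$, so its coefficients $c_k$ are absolutely summable. Then $\sum_{n,k}\mu_{n+k}|c_k||z|^n\le\mu([0,1))(1-|z|)^{-1}\sum_k|c_k|$, and Fubini gives $\H_\mu(f_a)=\I_\mu(f_a)$ for every finite $\mu$. No dilations or monotone limits are needed.

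\textbf{The lower bound is easiest at the single point $z=a$.} There $\I_\mu(f_a)(a)\ge\int_a^1 f_a(t)(1-at)^{-1}\,d\mu(t)\gtrsim\mu([a,1))(1-a)^{-1-1/p-1/q}$. Combine this with $|F(a)|\lesssim(1-a)^{-1/p-1/q}\|F\|_{AT^q_p}$, which follows by pairing $F$ with $(1-aw)^{-2}$ and using (\ref{estimation}) in $AT^{q'}_{p'}$. If you keep your disc version, positivity on the segment is not enough off the real axis. You need $\mathrm{Re}\,(1-tz)^{-1}\gtrsim(1-ta)^{-1}$ for all $t\in[0,1)$ and $|z-a|<c(1-a)$; this holds for small $c$, but it must be stated.

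\textbf{What each route buys.} Yours is shorter and makes compactness necessity almost free. However, that reduction goes through Proposition~\ref{Hmu=Imu} and therefore uses $p>2$. The paper's direct coefficient computation never invokes that proposition, which is why the authors can remark that both Carleson conditions are also necessary for $p\in(1,2]$. Your pointwise argument, run along $a_n\to1$ with $\|\H_\mu(f_{a_n})\|\to0$, would recover that as well.
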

In view of Proposition~\ref{Hmu=Imu} and Theorem~\ref{Theorem I},  the sufficiency of the Carleson measure conditions in both cases above comes out immediately. We establish the necessity in Section 3. Moreover, reading the proofs it is clear that these conditions are necessary even if $p\in (1,2]$.

All the above come down to the following conclusion.
 \begin{corollary}
 	Let $1<p,q<\infty$ such that $\frac{1}{p}+\frac{1}{q}<1$ and $p>2$. The Hilbert operator $\mathcal{H}: AT^q_p\rightarrow AT^q_p$ is bounded but not compact.
 \end{corollary}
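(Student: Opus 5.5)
The corollary follows from Theorem~\ref{about Hm} applied to Lebesgue measure $d\mu(t)=dt$ on $[0,1)$. For this choice $\mu_n=\int_0^1 t^n\,dt=1/(n+1)$, so $\H_\mu=\H$.

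First, I check that Lebesgue measure is a $1$-CM on $[0,1)$ in the sense of (\ref{1CM}). This is immediate: $\mu([t,1))=1-t$, so the quotient in (\ref{1CM}) is identically $1$. Equivalently, (\ref{growth of the moments}) holds since $\mu_n=1/(n+1)=O(1/n)$. Part (i) of Theorem~\ref{about Hm} then says that $\H$ is well defined and bounded on $AT^q_p$ whenever $1/p+1/q<1$ and $p>2$.

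Second, I show that Lebesgue measure is not a $1$-VCM. The vanishing (little-$o$) version of (\ref{1CM}) would require $\mu([t,1))/(1-t)\to 0$ as $t\to 1^-$. Here this quotient equals $1$ for every $t$, so the condition fails. Part (ii) of Theorem~\ref{about Hm} then rules out compactness of $\H$ on $AT^q_p$ in the same range of indices.

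There is essentially no obstacle here; all the work is contained in Theorem~\ref{about Hm}. The only point to confirm is that the "only if" direction of part (ii) is available under the same hypotheses $p>2$ and $1/p+1/q<1$. This is stated in the theorem, and the remark after it notes that the necessity holds even without $p>2$.
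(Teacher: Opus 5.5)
Your proof is correct and follows the paper's route: the paper presents the corollary as a direct consequence of Theorem~\ref{about Hm} applied to Lebesgue measure on $[0,1)$, where $\mu_n=1/(n+1)$ gives $\H_\mu=\H$. Since $\mu([t,1))/(1-t)\equiv 1$, this measure is a $1$-CM but not a $1$-VCM, so part (i) gives boundedness and the necessity direction of part (ii) rules out compactness.
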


Finally, we provide an estimation of the norm of the Hilbert operator as stated below. The proof of the upper bound makes use of several powerful tools, among them the factorization of tent spaces  \cite{CohnVerbitsky}.  In addition, as a step toward the main result, we obtain an estimate for the norm of composition operators on certain tent spaces (see Lemma~\ref{composition on triebel spaces}). Regarding to the lower bound, we will follow the same reasoning as \cite{DJV}.
\begin{theorem}
\label{norm estimation}
Let $1<p,q<\infty$ such that $p\neq q$, $\frac{1}{p}+\frac{1}{q}<1$ and $p >  2$. It is true that
		$$\int_0^1 \frac{dt}{t^{1-\left(\frac{1}{p}+\frac{1}{q}\right)}(1-t)^{\frac{1}{p}+\frac{1}{q}}}\lesssim \|\mathcal{H}\|_{AT^q_p \to AT^q_p}\lesssim \int_0^1 \frac{dt}{t^{1-\frac{1}{q}}(1-t)^{\frac{1}{p}+\frac{1}{q}}} .$$
\end{theorem}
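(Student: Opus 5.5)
We prove the two bounds separately, both starting from the representation $\mathcal H(f)=\mathcal I(f)$ of Proposition~\ref{Hmu=Imu}(iii) with $d\mu=dt$. The upper bound comes from the classical change of variables that turns $\mathcal I$ into an average of weighted composition operators. The lower bound follows the test-function argument of \cite{DJV}.

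\textbf{Upper bound.} For $z\in\D$ and $t\in(0,1)$ set
\[
\varphi_t(z)=\frac{t}{(t-1)z+1},\qquad \omega_t(z)=\frac{1}{(t-1)z+1}.
\]
Then, as in Diamantopoulos' treatment of Bergman spaces,
\[
\mathcal H(f)(z)=\int_0^1 \omega_t(z)\, f(\varphi_t(z))\,dt=:\int_0^1 T_t f(z)\,dt .
\]
Minkowski's integral inequality in the mixed norm of $AT^q_p$ gives
\[
\|\mathcal H f\|_{AT^q_p}\le \int_0^1 \|T_t f\|_{AT^q_p}\,dt,
\]
so it suffices to show
\[
\|T_t f\|_{AT^q_p}\lesssim t^{\frac1q-1}(1-t)^{-(\frac1p+\frac1q)}\|f\|_{AT^q_p}.
\]
Since $|\omega_t(z)|\le 1/t$ and in fact $|\omega_t(z)|\asymp |1-\varphi_t(z)|/(1-t)$, the weight can be absorbed. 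This leaves an estimate for the composition operator $C_{\varphi_t}$ on a tent space whose integrand carries a power of $|1-w|$.

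Here the paper's Lemma~\ref{composition on triebel spaces} is the relevant tool. The plan is to write $f=gh$ by the Cohn--Verbitsky factorization, with $g\in H^{q_1}$ and $h\in A^{p_1}$ suitably chosen, or more precisely with $g$ in a tent space $AT^{q}_\infty$ and $h\in AT^\infty_p$ type spaces. We then estimate $C_{\varphi_t}$ on each factor. The Hardy-type factor uses the classical norm $\|C_{\varphi_t}\|_{H^q}\asymp t^{\frac1q-1}$-type bounds (the $H^q$ estimate is $(1+|\varphi_t(0)|)/(1-|\varphi_t(0)|)$ to the power $1/q$, computed for $\varphi_t(0)=t$). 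The Bergman/Carleson-type factor uses a change of variables in area measure, where the Jacobian $|\varphi_t'|\asymp t\,|\omega_t|^2$ produces the factor $(1-t)^{-1/p}$ after integrating against $1/(1-|z|^2)$ over cones. The key geometric point is that $\varphi_t$ maps a cone $\Gamma_{1/2}(\xi)$ into a cone of slightly larger aperture with vertex $\varphi_t(\xi)$, with $|d\varphi_t(\xi)|$ controlled. Changing aperture costs only a constant in $AT^q_p$.

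Multiplying the factor bounds and recombining via Hölder's inequality then gives $t^{\frac1q-1}(1-t)^{-(\frac1p+\frac1q)}$. Integrability near $t=1$ requires $\frac1p+\frac1q<1$, and near $t=0$ it requires $q>1$. The condition $p\neq q$ is only needed so that the genuine factorization/tent structure is available; $p=q$ is the known Bergman case.

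\textbf{Lower bound.} Take
\[
f_\varepsilon(z)=(1-z)^{-(\frac1p+\frac1q)+\varepsilon}.
\]
Using \cite[Example 2.3]{AgConRodr}-type computations, one has
\[
\|f_\varepsilon\|_{AT^q_p}\asymp \varepsilon^{-1/q}
\]
(the sharp exponent needs checking). One then computes $\mathcal H f_\varepsilon=\mathcal I f_\varepsilon$ and shows
\[
\mathcal H f_\varepsilon \ge \bigl(\int_0^1 t^{\alpha-1}(1-t)^{-\alpha}\,dt-o(1)\bigr) f_\varepsilon \quad\text{on } [0,1),
\]
where $\alpha=\frac1p+\frac1q-\varepsilon$. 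This follows from the identity $\int_0^1 \frac{(1-t)^{-\alpha}}{1-tz}\,dt$ and the substitution $s=(1-z)t/(1-tz)$ used in \cite{DJV}, which yields $(1-z)^{-\alpha}\int_0^{1}$ of the Beta kernel up to a tail. The main contribution of the norm of $f_\varepsilon$ comes from near $z=1$, and there the ratio is close to the Beta integral. Letting $\varepsilon\to0$ gives the lower bound.

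\textbf{Main obstacle.} The hardest step is the sharp composition-operator estimate on tent spaces, namely controlling how $\varphi_t$ distorts the cones. I would first try the pointwise inclusion $\varphi_t(\Gamma_{1/2}(\xi))\subset\Gamma_{\beta}(\varphi_t(\xi))$ with $\beta$ independent of $t$, together with a change of variables on $\T$.
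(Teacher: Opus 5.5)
Your outline is the paper's proof. For the upper bound the paper likewise writes $\mathcal H f=\int_0^1\omega_t\,(f\circ\varphi_t)\,dt$ and applies Minkowski. It then factors $f=F\Phi$ with $F\in H^q$ and $\|\Phi\|_{AT^\infty_p}\le 1$ (Cohn--Verbitsky), bounds $\omega_t\,(F\circ\varphi_t)$ in $H^q$ and $\Phi\circ\varphi_t$ in $AT^\infty_p$, and recombines with the converse half of the factorization theorem, $\|F\Phi\|_{AT^q_p}\lesssim\|F\|_{H^q}\|\Phi\|_{AT^\infty_p}$ (your ``H\"older'' step). For the lower bound it uses the same test functions $(1-z)^{-\alpha}$, $\alpha\uparrow\frac1p+\frac1q$, and the identity $\mathcal H f_\alpha=f_\alpha\Phi_\alpha$, where $\Phi_\alpha$ is continuous on $\overline{\D}$ and $\Phi_\alpha(1)$ is the Beta integral. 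It then uses the fact that the normalized test functions concentrate at $z=1$. The step you single out as the main obstacle is exactly Lemma~\ref{composition on triebel spaces}, which gives $\|\Phi\circ\varphi_t\|_{AT^\infty_p}\lesssim(1-t)^{-1/p}$ since $\varphi_t(0)=t$. You should simply invoke it, because the route you sketch for it does not work.

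\emph{Why the cone route fails.} We have $|\varphi_t(\xi)|=t/|1-(1-t)\xi|<1$ for every $\xi\in\T\setminus\{1\}$. The image $\varphi_t(\D)$ is the disc with centre $1/(2-t)$ and radius $(1-t)/(2-t)$, internally tangent to $\T$ only at $1$. So $\varphi_t(\xi)$ is an interior point, there is no boundary cone $\Gamma_\beta(\varphi_t(\xi))$, and $\xi\mapsto\varphi_t(\xi)$ is not a change of variables on $\T$. Moreover, the factor you must compose with $\varphi_t$ lies in $AT^\infty_p$, whose norm is a supremum over Carleson boxes, not an integral over cones. The paper instead uses the derivative form \eqref{TentCMDer} and rewrites it conformally invariantly with the weight $\log(1/|\phi_a(z)|)$. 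It applies Schwarz--Pick (this is where $p\ge 2$ is used) together with $1-|z|^2\le\frac{1+|\varphi(0)|}{1-|\varphi(0)|}(1-|\varphi(z)|)$. It finishes with the non-univalent change of variables, under which the weight is dominated by $\log(1/|\phi_{\varphi(a)}(u)|)$.

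\emph{Two smaller corrections.} First, the factor $t^{1/q-1}$ is not a property of $C_{\varphi_t}$ on $H^q$, whose norm is $\asymp(1-t)^{-1/q}$. Absorbing $\omega_t$ through $|\omega_t|\le 1/t$ only yields $t^{-1}$, which is not integrable at $t=0$. Note $\varphi_t'=t(1-t)\,\omega_t^2$ (you dropped the factor $1-t$), so on $\T$ one writes $|\omega_t|^q=|\omega_t|^{q-2}\,|\varphi_t'|/(t(1-t))$. The integral of $|F\circ\varphi_t|^q|\varphi_t'|$ over $\T$ is the integral of $|F|^q$ over the tangent circle $\varphi_t(\T)$. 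By subharmonicity and Harnack's inequality this is bounded by a constant times $\|F\|_{H^q}^q$, uniformly in $t$. This gives $t^{1/q-1}(1-t)^{-1/q}$ for $q\ge 2$ and $t^{-1/q}(1-t)^{-1/q}$ for $1<q<2$, both integrable.

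Second, in the lower bound your substitution gives $\mathcal H f_\alpha(z)=(1-z)^{-\alpha}\int_0^1(1-s)^{-\alpha}(1-z+sz)^{\alpha-1}\,ds$. For $z\in[0,1)$ one has $1-z+sz\ge s$, so the ratio $\mathcal H f_\alpha/f_\alpha$ is at most the Beta integral there. Hence your inequality with the Beta constant cannot hold on all of $[0,1)$; it holds only in a neighbourhood of $z=1$. That, together with $\|f_\varepsilon\|_{AT^q_p}\to\infty$ and the uniform boundedness of $f_\varepsilon$ away from $1$, is all the argument needs, exactly as in the paper.
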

At this point we have to comment that the lower estimate generalizes known results related to the norm of the Hilbert operator on the Hardy and weighted Bergman spaces (see \cite{K,DJV,LMN}). We conjecture that this is the desired estimate from above as well.

The structure of the paper is as follows. 
Section 2 is devoted to the presentation of all the necessary information about the tent spaces theory. In Section 3 we present all the proofs of the results stated above. In Section 4 we expose the proof for the norm estimate of the classical Hilbert operator.

In what follows,  $A \lesssim B$ stands for the inequality $A \leq C B$ for some positive constant $C>0$ and $ A\gtrsim B $ analogously. If both $A \lesssim B$ and $A \gtrsim B$ hold, we write $A \asymp B$. This notation has already been used in the introduction.

\section{Previous definitions and results}
In this section, we recall the definition of the spaces of analytic functions under discussion. Moreover, we compile some properties for the sake of being self-contained.

\subsection{Tent spaces}
The work of Coifman, Meyer and Stein  is considered as the starting point of the study of tent spaces \cite{CMS}. 
Since then, they have been widely studied by many authors (see, e.g., \cite{Arsenovic, CohnVerbitsky, Jevtic_1996, Luecking1987, Luecking, MiihkinenPauPeralaWang2020, Perala}).

Let a $\xi\in \T$. We define the cone-like region
\begin{align*}
\Gamma_{1/2}(\xi) =\bigl\{ z\in \mathbb{D} : |z|< 1/2   \bigr\} \cup \bigcup_{|z|<1/2}[z,\xi)\,. 
\end{align*}
\begin{definition}
	Let $0<p,q<+\infty$. The tent spaces  $T_p^q$ consist of measurable functions $f$ on $\D$ such that 
	\begin{align}\label{TentA}
	\|f\|_{T_{p}^{q}}= \left\{\int_{\T} \left(\int_{\Gamma_{1/2}(\xi)} |f(z)|^p \ \frac{dA(z)}{(1-|z|^2)^{}} \right)^{q/p}\ |d\xi|\right \}^{1/q}<+\infty.
	\end{align}
 \end{definition}

It is of great importance that in \eqref{TentA} we can  use any of the cone-like regions
\begin{align*}
	\Gamma_{\beta}(\xi) =\left\{ z\in \mathbb{D} : |z|< \beta   \right\} \cup \bigcup_{|z|<\beta}[z,\zeta)\,,\quad \beta\in(0,1).
\end{align*}
In other words, we have 
\begin{align*}
	\|f\|_{T^q_p} \asymp \left\{\int_{\T} \left(\int_{\Gamma_{\beta}(\xi)} |f(z)|^p \ \frac{dA(z)}{(1-|z|^2)^{}} \right)^{q/p}\ |d\xi|\right \}^{1/q} \,.
\end{align*}
Actually, we can use any of the non-tangential regions 
 \begin{align*}
		 \Gamma_M (\xi) =\left\{z\in\D: |z-\xi|< M (1-|z|^2)  \right\},\quad  
          M>\frac{1}{2}\,\,
\end{align*}
as well.
This is true due the following technical lemma, which is  well-known to the experts of the area. 
The symbol $\Gamma_C(\xi)$ stands for any of the $\Gamma_{\beta} (\xi)$ or $\Gamma_M (\xi)$\,.
\begin{letterlemma}{\cite[Lemma 4, p. 66]{Arsenovic}}\label{estimate 1} 
	Let $0<p,q<+\infty$, $\lambda>\max\{1,p/q\}$ and $\mu$ be a positive Borel measure on $\D$\,. There are constants $K_i=K_i(p,q,\lambda,C),\,i=1,2 $ such that
	\begin{align*}
K_1 \int_{\T} \mu (\Gamma_C(\xi))^{q/p} |d\xi|\leq\int_{\T}\left(\int_{\D} \left(\frac{1-|z|}{|1-z\overline{\xi}|}\right)^{\lambda} \ d\mu(z)\right)^{q/p}|d\xi|\leq K_2 \int_{\T} \mu (\Gamma_C(\xi))^{q/p} |d\xi|\,.
	\end{align*}
\end{letterlemma}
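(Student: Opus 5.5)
The lower estimate is immediate, so the work is in the upper estimate. For the lower one, note that for $z\in\Gamma_C(\xi)$ we have $|1-z\overline{\xi}|\lesssim 1-|z|$, with a constant depending only on the aperture. Hence the kernel $\bigl(\frac{1-|z|}{|1-z\overline{\xi}|}\bigr)^{\lambda}$ is bounded below by a constant $c=c(C,\lambda)>0$ on $\Gamma_C(\xi)$. Discarding the integral over $\D\setminus\Gamma_C(\xi)$ gives
$$\int_{\D}\Bigl(\frac{1-|z|}{|1-z\overline{\xi}|}\Bigr)^{\lambda}d\mu(z)\ge c\,\mu(\Gamma_C(\xi)).$$
Raising to the power $q/p$ and integrating over $\T$ yields the left inequality.

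For the upper estimate I would decompose $\D$ into cones of dyadically increasing aperture. Fix $M>\frac12$ and let $\Gamma^{(k)}(\xi)=\Gamma_{2^kM}(\xi)$ for $k\ge 0$. For $z\in\Gamma^{(k+1)}(\xi)\setminus\Gamma^{(k)}(\xi)$ we have $|1-z\overline{\xi}|\gtrsim 2^k(1-|z|)$, so the kernel is $\lesssim 2^{-k\lambda}$ there. This gives the pointwise bound
$$\int_{\D}\Bigl(\frac{1-|z|}{|1-z\overline{\xi}|}\Bigr)^{\lambda}d\mu(z)\lesssim \sum_{k\ge0}2^{-k\lambda}\mu(\Gamma^{(k)}(\xi)).$$
Since the two sides of the lemma are comparable for different apertures of the base cone, it suffices to prove the bound with $\Gamma_M$ on the right. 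Write $s=q/p$. Everything then reduces to an aperture-change estimate for $\int_{\T}\mu(\Gamma^{(k)}(\xi))^s|d\xi|$, and the two regimes of $s$ are handled differently.

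\emph{Case $s\ge1$.} I would apply Minkowski's inequality in $L^s(\T)$ to the series. It then suffices to show $\|\mu(\Gamma^{(k)})\|_{L^s}\lesssim 2^k\|\mu(\Gamma_M)\|_{L^s}$, and I would prove this by duality. For $g\in L^{s'}(\T)$ with $g\ge0$, Fubini gives $\int_{\T}\mu(\Gamma^{(k)}(\xi))g(\xi)|d\xi|=\int_{\D}\int_{I_k(z)}g\,|d\xi|\,d\mu(z)$, where $I_k(z)$ is an arc about $z/|z|$ of length $\asymp 2^k(1-|z|)$. This is $\lesssim 2^k\int_{\D}(1-|z|)\,Mg(\zeta)\,d\mu(z)$ for every $\zeta$ in the arc $I_0(z)$, where $Mg$ is the Hardy--Littlewood maximal function. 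Averaging over $\zeta\in I_0(z)$ and applying Fubini back gives $\lesssim 2^k\int_{\T}\mu(\Gamma_M(\xi))\,Mg(\xi)|d\xi|$. Then H\"older's inequality and boundedness of $M$ on $L^{s'}$, where $s'<\infty$ requires $s>1$, give the claim. This leaves $\sum_k 2^{-k\lambda}2^k<\infty$, which is exactly the hypothesis $\lambda>1$. The endpoint $s=1$ is just Fubini, and yields the same factor $2^k$.

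\emph{Case $s<1$.} Here I would use $(\sum a_k)^s\le\sum a_k^s$, which reduces matters to the estimate
$$\int_{\T}\mu(\Gamma^{(k)}(\xi))^s|d\xi|\lesssim 2^k\int_{\T}\mu(\Gamma_M(\xi))^s|d\xi|.$$
With this, the series $\sum_k 2^{-k\lambda s}2^k$ converges precisely when $\lambda>1/s=p/q$, which accounts for the hypothesis $\lambda>\max\{1,p/q\}$.

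This aperture-change estimate for $s<1$ is the step I expect to be the main obstacle, since duality is unavailable there. My first attempt would be a Coifman--Meyer--Stein level-set argument. For $t>0$ let $E_t=\{\xi:\mu(\Gamma_M(\xi))>t\}$, and let $E_t^*=\{M\chi_{E_t}>c\,2^{-k}\}$, so that $|E_t^*|\lesssim 2^k|E_t|$. One then wants to show that $\mu(\Gamma^{(k)}(\xi))>t$ forces $\xi\in E_{ct}^*$, or to control the contribution of $\mu$ restricted to the tent over $\T\setminus E_t^*$ by a separate estimate. Integrating $s\,t^{s-1}|\{\mu(\Gamma^{(k)})>t\}|\,dt$ would then give the estimate. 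If this does not close, I would fall back on the atomic decomposition of $T^s_1$ for $s\le1$, which represents $\mu$ (tested against the cone indicators) as a sum of measures supported in tents $T(I_j)$ with $\sum\lambda_j^s\lesssim\int\mu(\Gamma_M)^s$. For a single atom the $k$-aperture functional is supported on $2^kI_j$, and H\"older's inequality on that arc gives the factor $2^k$ directly.
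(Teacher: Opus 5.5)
The paper does not prove this lemma (it is quoted from Arsenovi\'c), so your argument has to stand on its own. Its architecture is sound. The lower bound is correct, and so is the pointwise bound by $\sum_k 2^{-k\lambda}\mu(\Gamma^{(k)}(\xi))$. The change-of-aperture rates you aim for ($2^k$ for the $L^s$ norm when $s\ge 1$, $2^k$ for $\int_{\T}(\cdot)^s$ when $s<1$) are the right ones and reproduce exactly $\lambda>\max\{1,p/q\}$. The case $s\ge 1$ is complete.

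One small correction: fix $M>1$, not merely $M>\frac12$. For $M\le 1$ no cone $\Gamma_M(\xi)$ contains $0$ (for $M<1$ none meets $\{|z|\le 1/M-1\}$). So $|I_k(z)|\lesssim 2^k|I_0(z)|$ fails near the origin, and the right-hand inequality itself fails for $\mu=\delta_0$ with such cones. Note also that your reduction to a fixed base cone is itself a change-of-aperture estimate, so for $s<1$ it depends on the step discussed next.

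The genuine gap is that step: the bound $\int_{\T}\mu(\Gamma^{(k)}(\xi))^s|d\xi|\lesssim 2^k\int_{\T}\mu(\Gamma_M(\xi))^s|d\xi|$ for $s<1$. It carries all the difficulty, and you leave it open. Your first formulation, that $\mu(\Gamma^{(k)}(\xi))>t$ forces $\xi\in E^*_{ct}$ with $c$ independent of $k$, is false. Put mass $T$ uniformly on an arc of $\{|z|=r\}$ of length $\asymp 2^k(1-r)$ centred at $r\xi$ and lying inside $\Gamma^{(k)}(\xi)$. Then $\mu(\Gamma^{(k)}(\xi))=T$, but $\mu(\Gamma_M(\zeta))\lesssim 2^{-k}T$ for every $\zeta$, so for $t=T/2$ the set $E_{ct}$ is empty once $2^{-k}\ll c$. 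Even with $c\asymp 2^{-k}$, the layer-cake integration would cost $2^{k(1+s)}$, i.e. it would require $\lambda>1+p/q$.

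What closes the argument is the complementary estimate you mention only in passing. Let $E=E_t$, $E^*=\{\mathcal{M}\chi_E>c2^{-k}\}$ with $\mathcal{M}$ the maximal function, and take $c$ small. Then every $z\in\Gamma^{(k)}(\xi)$ with $\xi\notin E^*$ satisfies $|I_0(z)\cap E|\le c2^{-k}|I_k(z)|\le\frac12|I_0(z)|$, so Fubini gives
\begin{equation*}
\int_{\T\setminus E^*}\mu(\Gamma^{(k)}(\xi))\,|d\xi|\le\int_{\mathcal{R}}|I_k(z)|\,d\mu(z)\lesssim 2^k\int_{\mathcal{R}}|I_0(z)\setminus E|\,d\mu(z)\le 2^k\int_{\T\setminus E}\mu(\Gamma_M(\xi))\,|d\xi|,
\end{equation*}
where $\mathcal{R}=\bigcup_{\xi\notin E^*}\Gamma^{(k)}(\xi)$. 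Hence $|\{\mu(\Gamma^{(k)})>t\}|\lesssim 2^k\bigl(|E_t|+t^{-1}\int_{\{\mu(\Gamma_M)\le t\}}\mu(\Gamma_M)\,|d\xi|\bigr)$. Integrating against $st^{s-1}\,dt$ yields the bound with constant $\lesssim 2^k/(1-s)$; the hypothesis $s<1$ enters through $\int_u^\infty t^{s-2}\,dt<\infty$.

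Your atomic-decomposition fallback is also valid as sketched. The Coifman--Meyer--Stein decomposition works for measures, since it rests on the same Fubini/density estimate. However, the proposal commits to neither argument, so as written the case $q<p$ is not proved.
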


 	The tent spaces are defined also for the limit values of the $p,q$ as well. The $T_{\infty}^{q}$ consists of measurable functions $f$ on $\D$ with
	 	 \begin{align*}
	 	 	\|f\|_{T_{\infty}^{q}}=\left\{\int_{\T} \left(\esssup_{z\in \Gamma_{1/2}(\xi)} |f(z)|\right)^{q}\  |d\xi|\right\}^{1/q},\quad\text{ if } q<+\infty\,.
	 	 \end{align*}
 	  It is known that the definition is independent of the type of the non-tangential region we use.
 	
	 When $q=+\infty$ and $p<+\infty$, the tent space $T_{p}^{\infty}$ consists of measurable functions $f$ on $\D$ with
	 	 \begin{align}\label{TentC}
	 	 	\|f\|_{T_p^\infty}=\sup_I \left(\frac{1}{|I|}\int_{S(I)}|f(z)|^p  \ dA(z) \right)^{1/p}<+\infty.
	 	 \end{align}
	 	 Obviously, an equivalent way to state (\ref{TentC}) is to say that the measure
	 	 $d\mu(z)=|f(z)|^p\,dm(z)$
	 	 is a 1-CM.

	Here, we are interested in the holomorphic version of the tent spaces. We denote that  as 
	$$AT_p^{q} = T_p^q\cap \mathcal{H}(\D).$$
			 The limit case $p=\infty$ corresponds to the Hardy spaces, that is, $AT_{\infty}^q\equiv H^q \,,$ 
	 since the membership of an $f\in H^q$ can be determined by the $q$-integrability over $[0,2\pi)$ of the non-tangential maximal function
	 $$
	 N_{C}f(e^{i\theta})=\esssup_{z\in \Gamma_{C}(e^{i\theta})} |f(z)|\,
	 $$
	of the function $f$ (see \cite{Du}).

It is worth noting the norm for the analytic tent spaces $AT_{p}^{q}$, $1\leq p,q<\infty$, can be expressed equivalently using both radial integrals (see, e.g., \cite[Proposition 3.1, p. 15]{AguilarGal})
\begin{equation}\label{RMpq}
\|f\|_{AT_p^q}\asymp \left(\int_{0}^{2\pi} \left(
\int_{0}^{1} |f(re^{i\theta})|^p\ dr
\right)^{q/p}\frac{d\theta}{2\pi}\right)^{1/q}\doteqdot \rho_{p,q}(f)
\end{equation}
and by expressions involving the derivative (see \cite[Theorem 2, p. 9]{Perala})
$$
\|f\|_{AT_p^q}\asymp \left(\int_{\T}\left(
\int_{\Gamma_{1/2}(\xi)} |f'(z)|^p (1-|z|)^p\frac{dA(z)}{1-|z|}
\right)^{q/p}\ |d\xi|\right)^{1/q}.
$$
The latter also applies to tent spaces $AT_{p}^{\infty}$, $1\leq p<\infty$, obtaining the corresponding equivalent expression (see \cite[p. 321]{CohnVerbitsky})
\begin{equation}\label{TentCMDer}
 \|f\|_{AT_{p}^{\infty}}^p\asymp
|f(0)|^p+\sup_I \frac{1}{|I|}\int_{S(I)}|f'(z)|^p (1-|z|)^p dA(z). 
\end{equation}

 According to \cite{Perala}, if $p,q \in (1,\infty)$ then the $AT^q_p$ equipped with the norm 
(\ref{TentA}) are Banach spaces.
In account of the  duality relation, given by the inner product of $A^2$
$$
<f,g>=\int_{\D} f(z) \overline{g(z)} dA(z),
$$
it holds  that 
\begin{equation}\label{dual}
	( AT^q_p)^* \simeq AT^{q^{'}}_{p^{'}}\,
\end{equation}
(see \cite{Perala,AgConRodr,AguilarGal}). In addition, it will be very useful to us the following estimation. Let $p,q \in (1,\infty)$, $\alpha \in \D$ and $\beta > 1/p + 1/q $ then 
\begin{equation}\label{estimation}
\rho_{p,q}((1-\overline{a}z)^{-\beta})\asymp (1-|\alpha|)^{1/p+1/q-\beta} 
\end{equation}
(see \cite[Example 2.3]{AgConRodr}).

Finally, the next lemma can be deduced by following the proof of \cite[Proposition 2.8]{AgConRodr},  which will be used repeatedly throughout this paper, and describes the growth of such a function $f \in AT_p^q$.
\begin{lemma}
    \label{growth}
    Let $0<p,q<\infty$ and $f \in AT_p^q$. Then $$|f(z)|=o\left(\left(1-|z|\right)^{-1/p-1/q}\right), \quad |z|\to 1^-.$$
\end{lemma}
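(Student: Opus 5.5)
The statement to prove is Lemma~\ref{growth}: for $f\in AT^q_p$ we want $|f(z)|=o\bigl((1-|z|)^{-1/p-1/q}\bigr)$ as $|z|\to 1^-$. The plan has two parts. First I prove the big-$O$ estimate, using subharmonicity on a pseudo-hyperbolic disc together with the geometry of the cones. Then I upgrade it to little-$o$ by localizing the tent norm.

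\textbf{Big-$O$ estimate.} Fix $z\in\D$ with $|z|>1/2$ and write $r=1-|z|$. Let $D_z=D(z,r/4)$, and let $I_z\subset\T$ be the arc centred at $z/|z|$ with length comparable to $r$.

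\begin{itemize}
\item The cones are chosen with a fixed aperture (for instance some $\Gamma_M$, which is allowed by the equivalence of apertures recorded after Lemma~\ref{estimate 1}). With that choice, $D_z\subset \Gamma(\xi)$ for every $\xi\in I_z$.
\item On $D_z$ we have $1-|w|^2\asymp r$. Subharmonicity of $|f|^p$ then gives
\[
|f(z)|^p\lesssim \frac{1}{r^2}\int_{D_z}|f|^p\,dA\lesssim \frac{1}{r}\int_{\Gamma(\xi)}|f(w)|^p\frac{dA(w)}{1-|w|^2},\qquad \xi\in I_z .
\]
\item Raise this to the power $q/p$ and average over $\xi\in I_z$:
\[
|f(z)|^q\lesssim r^{-q/p}\,\frac{1}{|I_z|}\int_{I_z}\Bigl(\int_{\Gamma(\xi)}|f|^p\frac{dA}{1-|w|^2}\Bigr)^{q/p}|d\xi|\lesssim r^{-q/p-1}\|f\|_{AT^q_p}^q .
\]
\end{itemize}

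This already yields $O\bigl(r^{-1/p-1/q}\bigr)$.

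\textbf{Upgrade to little-$o$.} In the averaged inequality, replace the full cone by the truncated cone $\Gamma^{\delta}(\xi)=\Gamma(\xi)\cap\{|w|>1-\delta\}$. This is legitimate because, once $r<\delta/2$, we have $D_z\subset\{|w|>1-\delta\}$. We obtain
\[
|f(z)|^q (1-|z|)^{q/p+1}\lesssim \int_{\T}\Bigl(\int_{\Gamma^{\delta}(\xi)}|f|^p\frac{dA}{1-|w|^2}\Bigr)^{q/p}|d\xi| =: \varepsilon(\delta).
\]
It remains to show $\varepsilon(\delta)\to0$ as $\delta\to0$. For almost every $\xi$ the inner integral over $\Gamma(\xi)$ is finite, since $f\in AT^q_p$, so its tail over $\Gamma^\delta(\xi)$ tends to $0$. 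The integrand is dominated by $\bigl(\int_{\Gamma(\xi)}\cdots\bigr)^{q/p}\in L^1(\T)$, so dominated convergence gives $\varepsilon(\delta)\to 0$. This proves the claimed little-$o$ estimate.

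\textbf{Main obstacle.} The only delicate point is the geometric inclusion $D_z\subset\Gamma(\xi)$ for all $\xi$ in an arc of length $\asymp r$. I would handle it by working with $\Gamma_M$, $M$ large, where the inclusion is an elementary check: for $w\in D_z$ and $\xi\in I_z$, $|w-\xi|\le |w-z|+|z-z/|z||+|z/|z|-\xi|\lesssim r\asymp 1-|w|^2$. The change of aperture is harmless because the tent-space norms are equivalent for all apertures, as stated via Lemma~\ref{estimate 1}.
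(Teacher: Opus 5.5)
Your proof is correct and covers the full range $0<p,q<\infty$. One notational point: in the paper $D(z,r)$ denotes a hyperbolic disc, so say explicitly that $D_z$ is the Euclidean disc of radius $(1-|z|)/4$, which is what your estimates use.

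The paper writes out no argument of its own. It says the lemma follows by adapting the proof of \cite[Proposition 2.8]{AgConRodr}, which is set up for the radial quasi-norm $\rho_{p,q}$ of the average radial integrability spaces; to apply it verbatim to $AT^q_p$ one needs the identification \eqref{RMpq}. Your route stays with the cone definition, and this gains two things.

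First, you only need the independence of the aperture (Lemma~\ref{estimate 1}), which is valid for all $0<p,q<\infty$. By contrast, \eqref{RMpq} is recorded in the paper only for $1\le p,q<\infty$, although the lemma is stated for all $0<p,q<\infty$.

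Second, since $D_z\subset\Gamma_M(\xi)$ for every $\xi\in I_z$, the bound $|f(z)|^p\lesssim(1-|z|)^{-1}\int_{\Gamma_M(\xi)}|f|^p\,\frac{dA}{1-|w|^2}$ holds for each individual $\xi$. You can therefore raise it to the power $q/p$ before averaging, and no convexity in $q/p$ is needed. In a radial version the disc is not contained in a single radius, so one integrates over a polar box. One then has to use H\"older's inequality (in $\theta$ or along the radius, according to the relative size of $p$ and $q$) to reach $\rho_{p,q}$.

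Finally, your truncation step can be shortened. Set $h(\xi)=\bigl(\int_{\Gamma_M(\xi)}|f|^p\,\frac{dA}{1-|w|^2}\bigr)^{q/p}$, which lies in $L^1(\T)$. Your averaging step already gives $|f(z)|^q(1-|z|)^{q/p+1}\lesssim\int_{I_z}h\,|d\xi|$. This tends to $0$ as $|I_z|\to 0$ by absolute continuity of the integral, so no truncation or dominated convergence is required.
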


\subsection{Tent spaces of sequences}
Let $\beta(z,w)$ be the hyperbolic metric on $\D$ and let $D(z,r)=\{w\in\D\ :\ \beta(z,w)<r\}$ be the hyperbolic disc of radius $r>0$ centered at $z\in\D$. The sequence $Z=\{z_k\}$ is a separated sequence if there is a constant $\delta>0$ such that $\beta(z_k,z_j)\geq \delta$ for $j\neq k$. The sequence $Z=\{z_k\}$ is said to be an $(r,\kappa)$-lattice (in the hyperbolic distance), for $r>\kappa>0$, if  
\begin{enumerate}
	\item $\D=\bigcup_{k} D(z_k,r)$,
	\item the sets $D(z_k,\kappa)$ are pairwise disjoint,
\end{enumerate}
Notice that any $(r,\kappa)$-lattice is a separated sequence. 

\begin{remark}\label{remarkestimatedistbound}	
	It is known that if $z,w\in\D$ such that $\beta(z,w)<r$, then there is a constant $C=C(r)>0$ such that
	$\frac{1}{C}(1-|z|)\leq 1-|w|\leq C(1-|z|)$. 
\end{remark}

A standard way to get an $(r,\kappa)$-lattice is through the Whitney Decomposition of the unit disc. This is the $(r,\kappa)$-lattice described in the following example and the one that Luecking makes use of in \cite{Luecking}.  

\begin{example}\label{LueckingCenters}
Let $Z=\{z_{n,k}\}$ be the sequence formed by the centers of the regions 
$$R_{n,j}=\left\{z\in\D\ :\ 1-\frac{1}{2^{n}}\leq |z|<1-\frac{1}{2^{n+1}}\  ,\arg(z)\in\left[\frac{2\pi j}{2^{n}},\frac{2\pi (j+1)}{2^{n}}\right)\right\}$$
for $n\in\N\cup\{0\}$ and $j=0,1,\dots, 2^{n}-1$.  We will refer to them as Luecking regions.
There exist $r>\kappa>0$ such that $Z=\{z_{n,j}\}$ is an $(r,\kappa)$-lattice.

\end{example}

Our approach  in order to get our results is mainly based on the discrete version of the tent spaces. These are the tent spaces of sequences.
\begin{definition} Let $Z=\{z_{n}\}$ be a $(r,\kappa)$-lattice and $0<p,q<+\infty$.
	We say that $\{\lambda_n\}\in T_{p}^{q}(Z)$ if
	\begin{align}\label{TentsequenceA}
	\|\{\lambda_n\}\|_{T_{p}^{q}(Z)}:=\left(\int_{\T} \left(\sum_{z_n\in \Gamma_{1/2}(\xi)} |\lambda_n|^{p}\right)^{q/p}\ |d\xi|\right)^{1/q}<+\infty,
	\end{align}
	the sequence $\{\lambda_n\}_{n}\in T_{\infty}^{q}(Z)$ if
	\begin{align}\label{TentsequenceB}
	\|\{\lambda_n\}\|_{T_{\infty}^{q}(Z)}:=\left(\int_{\T} \left(\sup_{z_n\in \Gamma_{1/2}(\xi)} |\lambda_n|\right)^{q}\ |d\xi|\right)^{1/q}<+\infty,
	\end{align}
	and $\{\lambda_n\}\in T_{p}^{\infty}(Z)$ if
	\begin{align}\label{TentsequenceC}
	\|\{\lambda_n\}\|_{T_{p}^{\infty}(Z)}=\sup_I \left(\frac{1}{|I|}\sum_{z_{n}\in S(I)} |\lambda_n|^{p}(1-|z_n|^2)\right)^{1/p}<+\infty.
	\end{align}
\end{definition}
\begin{remark}\label{independetregionsequence}
	Lemma~\ref{estimate 1}  justifies the independence of the definition  \eqref{TentsequenceA} for any $\Gamma_M (\xi)$.  
\end{remark}

The following lemma is a well-know result in the theory of tent spaces of sequences and it will be useful in order to obtain equivalent expressions when we consider different non-tangential regions. 
\begin{letterlemma}\label{lemma_sup} Let $M_1>M>1/2$. There is a constant $C>0$ such that
	\begin{align*}
	\int_{\T} \left(\sup_{z_k\in \Gamma_{M_1}(\xi)} |\lambda_{k}|^{q}\right)\ |d\xi|\leq C \int_{\T} \left(\sup_{z_k\in \Gamma_{M}(\xi)} |\lambda_{k}|^{q}\right)\ |d\xi|
	\end{align*}
	for all $0<q<+\infty$, and for all sequences $\{z_k\}\subset \D$ and $\{\lambda_{k}\}\subset \C$.
\end{letterlemma}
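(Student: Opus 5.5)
The plan is the classical Coifman–Meyer–Stein argument comparing two aperture cones. It uses a level-set inclusion together with the weak type $(1,1)$ bound for the Hardy–Littlewood maximal operator on $\T$.

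Set $\nu_k=|\lambda_k|^q$, and define
$$F(\xi)=\sup_{z_k\in\Gamma_M(\xi)}\nu_k,\qquad F_1(\xi)=\sup_{z_k\in\Gamma_{M_1}(\xi)}\nu_k.$$
Since only the numbers $\nu_k\ge 0$ enter, the exponent $q$ plays no role. This is why the constant will be independent of $q$. By the layer-cake formula it suffices to prove, for every $s>0$,
$$|\{F_1>s\}|\le C\,|\{F>s\}|.$$
Integrating this in $s$ then gives $\int_\T F_1\le C\int_\T F$.

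The geometric input is as follows. For $z\in\D$, let $I_M(z)=\{\eta\in\T: z\in\Gamma_M(\eta)\}=\{\eta: |z-\eta|<M(1-|z|^2)\}$. For $M>1/2$ and $|z|$ not too small, $I_M(z)$ is an arc centred at $z/|z|$ whose length is comparable to $1-|z|$, with constants depending only on $M$. The same holds for $I_{M_1}(z)\supset I_M(z)$, so $|I_{M_1}(z)|\le C_0|I_M(z)|$ with $C_0=C_0(M,M_1)$.

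One caveat: if $M<1$, the points with $|z|\le 1/M-1$ lie in no $\Gamma_M(\eta)$ but may lie in some $\Gamma_{M_1}(\eta)$. For such points the inequality can fail trivially. I would therefore read the lemma for points $z_k$ in a fixed annulus $|z_k|\ge r_0$, or for $M\ge 1$; this is the situation of the lattices used in the paper, where the finitely many central points are harmless. Under this convention every relevant $I_M(z)$ is a nonempty arc comparable to $I_{M_1}(z)$.

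Now fix $s>0$ and let $E_s=\{F>s\}$. This set is open, being a union of the open arcs $I_M(z_k)$ with $\nu_k>s$. Take $\xi$ with $F_1(\xi)>s$. Then there is a $k$ with $\nu_k>s$ and $\xi\in I_{M_1}(z_k)$. Let $J$ be the arc concentric with $I_{M_1}(z_k)$ of twice its length; then $\xi\in J$. Moreover $I_M(z_k)\subset E_s$ and $I_M(z_k)\subset J$, so
$$\frac{|J\cap E_s|}{|J|}\ \ge\ \frac{|I_M(z_k)|}{2|I_{M_1}(z_k)|}\ \ge\ \frac{1}{2C_0}=:c.$$
Hence $\{F_1>s\}\subset\{\mathcal M\chi_{E_s}>c/2\}$, where $\mathcal M$ is the uncentred Hardy–Littlewood maximal function on $\T$. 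The weak $(1,1)$ inequality then yields $|\{F_1>s\}|\le (2A/c)\,|E_s|$, with $A$ the absolute weak-type constant.

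The main (and only real) obstacle is the geometric step. One must check that the shadows $I_M(z)$ and $I_{M_1}(z)$ are arcs, concentric up to a bounded factor, with comparable lengths uniformly in $z$. I would verify this by writing $z=re^{i\theta}$ and solving $|re^{i\theta}-e^{it}|^2=1-2r\cos(t-\theta)+r^2<M^2(1-r^2)^2$. This gives $1-\cos(t-\theta)<\bigl(M^2(1-r^2)^2-(1-r)^2\bigr)/(2r)$, an arc centred at $\theta$ whose half-length is comparable to $(1-r)\sqrt{4M^2-1}$ as $r\to1$. The ratio of the two lengths is therefore bounded by a constant depending on $M$ and $M_1$ only. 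Everything else is the standard distribution-function argument.
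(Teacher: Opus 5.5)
The paper gives no proof of this lemma; it quotes it as a well-known fact about tent spaces of sequences. Your argument is the standard Coifman--Meyer--Stein proof: the layer-cake formula, the inclusion $\{F_1>s\}\subset\{\mathcal M\chi_{\{F>s\}}\ge c\}$, and the weak type $(1,1)$ bound. It is correct, and the constant is visibly independent of $q$, as the statement requires. Your computation of the shadows $I_M(z)$ is also right.

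Your caveat is a genuine correction of the statement as printed, not pedantry. Suppose $M\le 1$. Take a single point $z_1$ with $1/M_1-1<|z_1|\le 1/M-1$ (for instance $z_1=0$ when $M_1>1$) and $\lambda_1=1$. Then $z_1$ lies in $\Gamma_{M_1}(\xi)$ for every $\xi$ in a nonempty arc, but in no $\Gamma_M(\xi)$. So the left side is positive and the right side is $0$.

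Three details in your repair need fixing.

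First, ``$M\ge 1$'' must read ``$M>1$'': when $M=1<M_1$, the origin is exactly such a point. For $M>1$ your formula gives $M^2(1+r)^2-1\ge M^2-1>0$. Hence $|I_{M_1}(z)|\le C_0|I_M(z)|$ for every $z\in\D$, and the lemma holds for all sequences.

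Second, in the annulus version you need $r_0>1/M-1$, since this is what bounds $M^2(1+r)^2-1$ from below. Also, the finitely many lattice points in $\{|z|\le 1/M-1\}$ are not ``harmless'' for the inequality itself. They are invisible to the right-hand side, so they must be excluded or controlled by an extra term such as $2\pi\sup_{|z_k|<r_0}|\lambda_k|^q$.

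Third, doubling the arc is unnecessary. The arc $J=I_{M_1}(z_k)$ already contains $\xi$ and $I_M(z_k)$, and the uncentred maximal function accepts it. If you keep the doubling, take $J=\T$ when the doubled arc would exceed the circle.
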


\subsection{Carleson type measures for tent spaces}

Let $\mathbb{X}$ be a quasi-Banach space for analytic functions in the unit disc. For $s>0$, we will say that a positive Borel measure $\mu$ on $\D$ is $(s,\mathbb{X})$-Carleson measure if there is a positive constant $C$ such that
$$
\int_{\D} |f(w)|^s\ d\mu(w)\leq C \|f\|_{\mathbb{X}}^{s}
$$
for all $f\in\mathbb{X}$. Carleson introduced this concept in his work \cite{Carleson} about the theory of interpolating sequences for Hardy spaces. He proved that the geometric condition \eqref{1CM on D} describes the $(p,H^p)$-Carleson measures.

Following this, we will summarize some results in regard to the Carleson measures for tent spaces. In 1993, Luecking proved in \cite{Luecking} the following result in the setting of tent spaces of analytic functions defined on the upper half-plane. In any case, his argument can be adapted for tent spaces of holomorphic functions on the unit disc.

\begin{lettertheorem}\label{LAG} {\cite[Theorem 3, p. 354]{Luecking}; \cite[Theorem 4.3, p. 23]{AguilarGal}} Let $0<s,p,q<+\infty$, $Z=\left\{z_k\right\}$ the $(r,\kappa)$-lattice consisting of the centers of the Luecking regions, and $n\in\N\cup\{0\}$. Let $\mu$ be a finite positive measure on $\D$. Then the following assertions are equivalent.
\begin{enumerate}
	\item There is a constant $C>0$ such that
$$
\left(\int_{\D} |f(w)|^s\ d\mu(w)\right)^{1/s}\leq C \|f\|_{T_p^q}
$$
for all $f\in AT_p^q$.
\item The sequence 
$$
\mu_k:=\mu(R_k)(1-|z_k|)^{-\frac{s}{p}-sn-1},
$$
where $R_k$ is a Luecking region and $z_k$ is the center of this region, satisfies one of the following:
\begin{enumerate}
	\item[(a)] If $s<p,q$, then $\{\mu_k\}\in T_{\frac{p}{p-s}}^{\frac{q}{q-s}}(Z)$
	\item[(b)] If $p\leq s<q$, then $\{\mu_k\}\in T_{\infty}^{\frac{q}{q-s}}(Z)$ \item[(c)] If $q<s$ or $p\leq s=q$, then $\{\mu_k(1-|z_k|)^{1-\frac{s}{q}}\}\in \ell^\infty$
	\item[(d)] If $s=q<p$, then $\{\mu_k\}\in T_{\frac{p}{p-s}}^{\infty}(Z)$
\end{enumerate}
\end{enumerate} 
\end{lettertheorem}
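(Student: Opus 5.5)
The plan is to discretize both sides over the Luecking lattice $Z=\{z_k\}$ and reduce the embedding to a duality statement for the sequence tent spaces $T_p^q(Z)$. I treat $n=0$ in detail. For general $n$ the same argument is run with $f^{(n)}$ in place of $f$, using the derivative characterization of $AT_p^q$ (Perälä) to trade each derivative for a factor $(1-|z_k|)^{-1}$; this accounts for the term $-sn$ in the exponent defining $\mu_k$.

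\emph{Sufficiency.} Let $\widetilde R_k$ be a fixed hyperbolic enlargement of $R_k$ and $a_k=\sup_{R_k}|f|$. By subharmonicity of $|f|^p$ on $\widetilde R_k$,
$$a_k^p\lesssim (1-|z_k|)^{-2}\int_{\widetilde R_k}|f|^p\,dA .$$
Since $|\{\xi:\ z_k\in\Gamma_{1/2}(\xi)\}|\asymp 1-|z_k|$, Lemma~\ref{estimate 1} (or Lemma~\ref{lemma_sup}) for enlarged apertures gives
$$\|\{a_k(1-|z_k|)^{1/p}\}\|_{T^q_p(Z)}\lesssim\|f\|_{AT^q_p}.$$
Next write
$$\int_\D|f|^s\,d\mu\le\sum_k\mu(R_k)\,a_k^s\asymp\int_\T\sum_{z_k\in\Gamma(\xi)}\mu_k\,\bigl(a_k(1-|z_k|)^{1/p}\bigr)^s\,|d\xi| .$$
The cases are then handled separately.
\begin{itemize}
\item[(a)] $s<p,q$. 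The sequence $\{(a_k(1-|z_k|)^{1/p})^s\}$ lies in $T^{q/s}_{p/s}(Z)$ with exponents $>1$. Hölder's inequality, inside the cone sum and then on $\T$, pairs it with $T^{q/(q-s)}_{p/(p-s)}(Z)$.
\item[(b)] $p\le s<q$. Use $\ell^{p}\subset\ell^{s}$ to dominate the inner sum. Then apply the Coifman--Meyer--Stein pairing of $T^{q/s}_1$ with $T^{(q/s)'}_\infty$, whose proof goes through the estimate $\int \sum_{\Gamma(\xi)}|\lambda_k\nu_k|\lesssim\int N(\nu)\,C(\lambda)$ with $C$ the area functional.
\item[(c), (d)] $s=q$, and $s>q$. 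For $s=q$ use the $T^1\leftrightarrow T^\infty$ (Carleson) duality. For $s>q$ use the pointwise growth of Lemma~\ref{growth} together with the embedding of $AT^q_p$ into a weighted Bergman space, which reduces to a classical Carleson estimate.
\end{itemize}

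\emph{Necessity.} Fix $\beta$ large and use Luecking's atomic decomposition of $AT^q_p$ (available also via Cohn--Verbitsky). For $\lambda\in T^q_p(Z)$,
$$F_\lambda(z)=\sum_k\lambda_k\,\frac{(1-|z_k|)^{\beta-1/p-1/q}\,(1-|z_k|)^{1/q}}{(1-\overline{z_k}z)^{\beta}}$$
satisfies $\|F_\lambda\|_{AT^q_p}\lesssim\|\lambda\|_{T^q_p(Z)}$, with the normalization chosen via \eqref{estimation} so that each term has size $\asymp|\lambda_k|(1-|z_k|)^{-1/p}$ on $R_k$. Replace $\lambda_k$ by $r_k(t)\lambda_k$ with Rademacher functions $r_k$, integrate in $t$, and apply Khinchine's inequality. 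This gives
$$\int_\D\Bigl(\sum_k|\lambda_k|^2|\text{term}_k|^2\Bigr)^{s/2}d\mu\lesssim\|\lambda\|^s .$$
Keeping only the $k$-th term on $R_k$ yields
$$\sum_k\mu_k\,(1-|z_k|)\,\bigl(|\lambda_k|(1-|z_k|)^{-1/q}\bigr)^s\cdots\lesssim\|\lambda\|_{T^q_p(Z)}^s,$$
that is, $\{\mu_k\}$ acts boundedly on $\{|\lambda_k|^s\}\in T^{q/s}_{p/s}(Z)$. The converse direction of the sequence tent-space dualities used above then identifies $\{\mu_k\}$ as an element of the dual space in each case (a)--(d). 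In case (c) it is simpler to test single atoms, which gives the $\ell^\infty$ condition directly.

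\emph{Main obstacle.} I expect the hardest step to be the full set of duality statements for $T^q_p(Z)$, including the endpoints $T_\infty$ and $T^\infty$ and the non-convex ranges $p/s<1$ or $q/s<1$ that arise when the roles of the exponents are switched. The first thing I would try is to establish these for Luecking lattices by transferring Coifman--Meyer--Stein's continuous results: the lattice can be replaced by piecewise constant functions on the regions $R_k$. In the range $p\le s<q$, the $\ell^\infty$-type factorization of Cohn--Verbitsky would replace the missing convexity.
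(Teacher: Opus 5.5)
The paper gives no proof of this statement; it is quoted from Luecking and from Aguilar-Hern\'andez--Galanopoulos. Your architecture is Luecking's: discretize over the Luecking regions, use atoms plus Khinchine for necessity, and reduce to a duality statement for $T^q_p(Z)$. You are also right to read (1) with $f^{(n)}$, since as printed (2) depends on $n$ while (1) does not.

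\emph{Main gap: necessity in case (b).} This is the step you defer, for $p<s<q$. After Khinchine you have, for all $c_k=|\lambda_k|^s\ge 0$,
\[
\sum_k \mu_k\, c_k\,(1-|z_k|)\lesssim \|c\|_{T^{Q}_{P}(Z)},\qquad P=\frac{p}{s},\quad Q=\frac{q}{s}.
\]
With your own normalization there is no extra factor $(1-|z_k|)^{-s/q}$ here; as you wrote it, the ``that is'' does not follow. In (a) and (d) Coifman--Meyer--Stein duality concludes, and in (c) single atoms suffice. In (b), however, $P<1<Q$. Then $T^Q_P(Z)$ is smaller than $T^Q_1(Z)$, so its K\"othe dual contains $T^{Q'}_\infty(Z)$ but could a priori be larger. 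No duality theorem gives the reverse inclusion, and the pointer to Cohn--Verbitsky factorization is not an argument.

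What is needed is an explicit stopping-time test sequence. Let $I_k=\{\xi: z_k\in\Gamma_{1/2}(\xi)\}$, $N\mu(\xi)=\sup_{z_k\in\Gamma_{1/2}(\xi)}\mu_k$ and $O_j=\{N\mu>2^j\}=\bigcup_{\mu_k>2^j}I_k$. By Vitali, choose $K_j\subset\{k:\mu_k>2^j\}$ with the $I_k$, $k\in K_j$, pairwise disjoint and $\sum_{k\in K_j}|I_k|\gtrsim |O_j|$. Set $c_k=\sum_{j:\,k\in K_j}2^{j(Q'-1)}$.
\begin{itemize}
\item A cone $\Gamma_{1/2}(\xi)$ contains at most one point of each $K_j$, and none unless $\xi\in O_j$.
\item Using $P\le 1$, this gives $\sum_{z_k\in\Gamma_{1/2}(\xi)}c_k^P\le \sum_{2^j<N\mu(\xi)}2^{j(Q'-1)P}\lesssim N\mu(\xi)^{(Q'-1)P}$, hence $\|c\|_{T^Q_P(Z)}\lesssim \|N\mu\|_{L^{Q'}}^{Q'/Q}$.
\item The left-hand side above is $\gtrsim \sum_j 2^{jQ'}|O_j|\asymp \|N\mu\|_{L^{Q'}}^{Q'}$.
\end{itemize}
Applying this to $\mu$ truncated to finitely many levels yields $\{\mu_k\}\in T^{q/(q-s)}_\infty(Z)$. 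Without an argument of this kind, (b) is unproved.

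\emph{Second gap: sufficiency in (c) when $q<s<p$.} The growth lemma only trades $|f|^s$ for $|f|^t$ with $t<s$, against a measure $\nu$ with $\nu(R_k)\lesssim(1-|z_k|)^{t(1/p+1/q)}$. For $t\le q<p$ this is a case (a)/(d) problem, where a pointwise bound on $\nu(R_k)$ is not enough. For $q<t\le s$ your weighted Bergman route needs $AT^q_p\hookrightarrow A^t_{t(1/p+1/q)-2}$, which is not classical and is itself an instance of (c).

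A direct argument works instead. Put $b_k=(1-|z_k|)^{1/p}\sup_{R_k}|f|$ and $A(b)(\xi)=\bigl(\sum_{z_k\in\Gamma_{1/2}(\xi)}b_k^p\bigr)^{1/p}$.
\begin{itemize}
\item One has $b_k\le \inf_{I_k}A(b)$, so $b_k>\lambda$ forces $I_k\subset\{A(b)>\lambda\}$.
\item Since $s/q>1$, $\sum_{I_k\subset O}|I_k|^{s/q}\lesssim |O|^{s/q}$ for every open $O\subset\T$.
\item The layer-cake formula together with $|\{A(b)>\lambda\}|\le \lambda^{-q}\|A(b)\|_{L^q}^q$ gives $\sum_k b_k^s(1-|z_k|)^{s/q}\lesssim \|A(b)\|^s_{L^q}\lesssim \|f\|^s_{AT^q_p}$.
\end{itemize}
This is exactly what the bound $\mu(R_k)\lesssim(1-|z_k|)^{s/p+s/q}$ in (c) requires.
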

\begin{remark}
The cases stated above as 2(a), 2(b) and 2(c) were covered by Luecking in \cite[Theorem 3, p. 354]{Luecking}. Although the case $0<s=q<p<+\infty$ also appears, it had to be clarified later in \cite[Theorem 4.3, p. 23]{AguilarGal}.
\end{remark}

In addition, Luecking posed in \cite{Luecking} a more general version of the above question. It consists of characterizing the positive Borel measures $\mu$ on $\D$ for which there is a positive constant $C$ such that
$$
\int_{\T}\left( \int_{\Gamma(\xi)} |f(z)|^t\ d\mu(z)\right)^{s/t}\ |d\xi|\leq C \|f\|_{T_p^q}^s
$$
for every $f\in AT_p^q$. Later, two of the authors of this paper solved this question in \cite[Thm 4.1, p. 18]{AguilarGal}.

\begin{lettertheorem}\cite[Thm 4.1, p. 18]{AguilarGal} Let $0<p,q,s,t<+\infty$, $M>1$, $Z=\left\{z_k\right\}$ an $(r,\kappa)$-lattice, and let $\mu$ be a positive measure on $\D$. Then the following assertions are equivalent:
\begin{enumerate}
\item There is a constant $C>0$ such that
$$
\left(\int_{\T} \left(\int_{\Gamma_{1/2}(\xi)} |f(w)|^t\ d\mu(w)\right)^{s/t}\ |d\xi|\right)^{1/s}\leq C \|f\|_{T_p^q}
$$
for all $f\in AT_p^q$.
\item The measure $\mu$ satisfies the following:
\begin{enumerate}
	\item[(a)] If $0<s<q<+\infty$, $0<t<p<+\infty$, then
	$$
	\int_{\T}\left(\sum_{z_k\in \Gamma_{1/2}(\xi)} \left(
	\frac{\mu^{1/t}(\Delta(z_k,r))}{(1-|z_k|)^{1/p}}
	\right)^{\frac{pt}{p-t}}\right)^{\frac{(p-t)qs}{(q-s)pt}}\ |d\xi|<+\infty.
	$$
	\item[(b)] If $0<s<q<+\infty$, $0<p\leq t<+\infty$, then
	$$
	\int_{\T}\left(\sup_{z_k\in \Gamma_{1/2}(\xi)} 
	\frac{\mu^{1/t}(\Delta(z_k,r))}{(1-|z_k|)^{1/p}}
	\right)^{\frac{qs}{q-s}}\ |d\xi|<+\infty.
	$$
	\item[(c)] If $0<q<s<+\infty$, $0<p,t<+\infty$ or $0<q)s<+\infty$, $0<p\leq t<+\infty$, then
	$$
	\sup_{k} \frac{\mu^{1/t}(\Delta(z_k,r))}{(1-|z_k|)^{\frac{1}{p}+\frac{1}{q}-\frac{1}{s}}}<+\infty.
	$$
	\item[(d)] If $0<q=s<+\infty$, $0<t<p<+\infty$, then
	$$
	\sup_{\xi\in\T} \left(\sup_{\xi\in I} \frac{1}{|I|}\sum_{z_k\in S(I)} \left(\frac{\mu^{1/t}(\Delta(z_k,r))}{(1-|z_k|)^{1/p}}\right)^{\frac{pt}{p-t}}(1-|z_k|)\right)^{\frac{p-t}{pt}}<+\infty.
	$$
\end{enumerate}
	\end{enumerate}
\end{lettertheorem}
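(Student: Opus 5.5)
The proof discretizes everything on the lattice $Z=\{z_k\}$ and reduces the embedding to a multiplier problem between sequence tent spaces. We set
$$c_k=\frac{\mu^{1/t}(\Delta(z_k,r))}{(1-|z_k|)^{1/p}},$$
and the characterization in each case (a)--(d) should be exactly the condition that $\{c_k\}$ multiplies $T^q_p(Z)$ into $T^s_t(Z)$.

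\emph{Sufficiency.} First, subharmonicity of $|f|^p$ gives, with an enlarged disc $\Delta^+_k=\Delta(z_k,2r)$,
$$\sup_{\Delta(z_k,r)}|f|^t\lesssim (1-|z_k|)^{-t/p}\,b_k^t,\qquad b_k=\Bigl(\int_{\Delta^+_k}|f|^p\,\frac{dA}{1-|z|}\Bigr)^{1/p}.$$
Next, by Remark~\ref{remarkestimatedistbound} and the independence of the aperture (Lemma~\ref{estimate 1}, Remark~\ref{independetregionsequence}), $\|\{b_k\}\|_{T^q_p(Z)}\lesssim\|f\|_{AT^q_p}$. Together these give
$$\int_{\Gamma_{1/2}(\xi)}|f|^t\,d\mu\lesssim\sum_{z_k\in\Gamma_M(\xi)}c_k^t\,b_k^t,$$
so it remains to bound the sequence quantity $\int_{\T}\bigl(\sum_{\Gamma_M(\xi)}c_k^tb_k^t\bigr)^{s/t}|d\xi|$ by $\|\{b_k\}\|_{T^q_p(Z)}^s$. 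The cases are then handled as follows.
\begin{itemize}
\item[(a)] Apply H\"older in the inner sum with exponents $p/t$ and $p/(p-t)$, then H\"older in $\xi$ with exponents $q/s$ and $q/(q-s)$. This produces exactly the $T^{\frac{(p-t)qs}{(q-s)pt}}$-type condition in (a).
\item[(b)] Pull $\sup c_k$ out of the inner sum and use the embedding $\ell^p\subset\ell^t$ (valid since $t\ge p$), followed by H\"older in $\xi$.
\item[(c)] When $q<s$, the condition is a pointwise one. I would combine the pointwise growth of Lemma~\ref{growth} with a sup-over-cones argument in the spirit of Lemma~\ref{lemma_sup}.
\item[(d)] For $s=q$, use the Coifman--Meyer--Stein duality between $T^1$-type quantities and Carleson measures ($T^\infty$), applied to the sequence $\{c_k^{pt/(p-t)}\}$.
\end{itemize}

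\emph{Necessity.} I would use the atomic decomposition of $AT^q_p$. For $\{\lambda_k\}\in T^q_p(Z)$ and $N$ large, the function
$$f=\sum_k\lambda_k\frac{(1-|z_k|)^{N-1/p}}{(1-\overline{z_k}z)^{N}}$$
lies in $AT^q_p$ with norm $\lesssim\|\lambda\|_{T^q_p(Z)}$. I would replace $\lambda_k$ by $r_k(\omega)\lambda_k$ with Rademacher functions $r_k$, integrate in $\omega$, and apply Khinchine's and Kahane's inequalities together with Fubini. Restricting $|1-\overline{z_k}w|\asymp 1-|z_k|$ to $w\in\Delta(z_k,r)$ then yields
$$\int_{\T}\Bigl(\sum_{z_k\in\Gamma(\xi)}|\lambda_k|^t c_k^t\Bigr)^{s/t}|d\xi|\lesssim\|\lambda\|^s_{T^q_p(Z)}.$$
So $\{c_k^t\}$ is a multiplier from $T^{q/t}_{p/t}(Z)$ into $T^{s/t}_1(Z)$. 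Identifying such multipliers through duality and the factorization of sequence tent spaces (Cohn--Verbitsky) gives conditions (a), (b) and (d). Case (c) follows more simply by testing a single atom at $z_k$.

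The main obstacle is the necessity part. Khinchine averaging interacts awkwardly with the mixed $s/t$ exponent, and computing the multiplier class exactly (especially the endpoint $s=q$ in (d)) requires the factorization theorem rather than plain H\"older. I would first try to reduce by a power substitution to the case $t=1$, where tent-space duality applies directly.
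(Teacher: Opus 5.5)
The paper does not prove this statement; it is quoted from \cite{AguilarGal}, so your proposal has to be judged on its own. The following parts are sound:
\begin{itemize}
\item the discretization $\sup_{\Delta(z_k,r)}|f|^t\lesssim(1-|z_k|)^{-t/p}b_k^t$ together with $\|\{b_k\}\|_{T^q_p(Z)}\lesssim\|f\|_{AT^q_p}$;
\item the H\"older arguments for (a) and (b);
\item the single-atom test for the necessity of (c);
\item the Rademacher--Khinchine--Kahane reduction of necessity to $\int_{\T}\bigl(\sum_{z_k\in\Gamma(\xi)}|\lambda_k|^tc_k^t\bigr)^{s/t}|d\xi|\lesssim\|\lambda\|^s_{T^q_p(Z)}$. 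To have $\Delta(z_k,r)\subset\Gamma(\xi)$ on the left, enlarge the aperture via Lemma~\ref{estimate 1} applied to the measure $|f|^t\,d\mu$.
\end{itemize}
There are, however, two genuine gaps.

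\emph{Sufficiency of (c) when $q<s$.} Here $c_k\lesssim(1-|z_k|)^{\frac1q-\frac1s}$. Set $\mathcal{A}(\xi)=\bigl(\sum_{z_k\in\Gamma(\xi)}b_k^p\bigr)^{1/p}$ and $\mathcal{B}(\xi)=\bigl(\sum_{z_k\in\Gamma(\xi)}(1-|z_k|)^{t(\frac1q-\frac1s)}b_k^t\bigr)^{1/t}$; you need $\|\mathcal{B}\|_{L^s}\lesssim\|\mathcal{A}\|_{L^q}$. The weight is exactly critical, and the tools you name do not reach it.

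The quantitative form of Lemma~\ref{growth} gives $b_k\lesssim(1-|z_k|)^{-1/q}\|f\|_{AT^q_p}$. Used on all of $|f|^t$, it leaves the divergent sum $\sum_{z_k\in\Gamma(\xi)}(1-|z_k|)^{-t/s}$. Used only on a factor $|f|^{t-\theta}$ with $\theta=tq/s$, it reduces matters to the endpoint $s=q$ with inner exponent $\theta$ and the measure $d\nu=(1-|z|)^{-(t-\theta)(\frac1p+\frac1q)}d\mu$. About $\nu$ one only knows $\nu(\Delta(z_k,r))^{1/\theta}\lesssim(1-|z_k|)^{1/p}$. That suffices if $\theta\ge p$. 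If $tq/s<p$ (in particular whenever $t<p$, which (c) explicitly allows), the endpoint requires the Carleson condition (d). That condition fails, for instance, when $\mu(\Delta(z_k,r))\asymp(1-|z_k|)^{t(\frac1p+\frac1q-\frac1s)}$. Lemma~\ref{lemma_sup} only compares apertures, and replacing the sum by a supremum over the cone loses the critical power.

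What is needed is a Hedberg-type interpolation. Let $I_k=\{\eta\in\T: z_k\in\Gamma(\eta)\}$, an arc of length $\asymp 1-|z_k|$ containing $\xi$ whenever $z_k\in\Gamma(\xi)$. Then $b_k\le\inf_{I_k}\mathcal{A}$. Hence, for fixed $0<r<q$ and $\mathcal{M}$ the Hardy--Littlewood maximal operator,
\[
b_k\le\min\Bigl\{C(1-|z_k|)^{-1/q}\|\mathcal{A}\|_{L^q},\ \bigl(\mathcal{M}(\mathcal{A}^r)(\xi)\bigr)^{1/r}\Bigr\},\qquad z_k\in\Gamma(\xi).
\]
A cone contains $O(1)$ lattice points per dyadic level. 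Splitting the sum where the two bounds cross gives $\mathcal{B}(\xi)\lesssim\bigl(\mathcal{M}(\mathcal{A}^r)(\xi)\bigr)^{\frac{q}{rs}}\|\mathcal{A}\|_{L^q}^{1-\frac qs}$. Boundedness of $\mathcal{M}$ on $L^{q/r}$ then finishes. You also omit the subcase $s=q$, $p\le t$ of (c), which is immediate as in (b).

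\emph{Necessity.} Identifying the multipliers $T^q_p(Z)\to T^s_t(Z)$ is the whole content here, and you only assert it.

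For (a) and (d), duality plus factorization can be made to work. Put $\frac1u=\frac1t-\frac1p$ and $\frac1v=\frac1s-\frac1q$, with $v=\infty$ in (d), and take $0<\gamma<\min\{s,t\}$. Norm $\{c_k^\gamma\}$ in $T^{v/\gamma}_{u/\gamma}(Z)$ by duality. Then factor each dual test sequence as $\{\lambda_k^\gamma e_k\}$ with $\lambda\in T^q_p(Z)$ and $e$ in the dual of $T^{s/\gamma}_{t/\gamma}(Z)$, with norm control. This is a sequence version of \cite{CohnVerbitsky}.

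For (b) with $p<t$ this scheme cannot work. Such products have inner exponent $(\frac{\gamma}{p}+1-\frac{\gamma}{t})^{-1}<1$, so they cannot represent the space $T^{(v/\gamma)'}_1(Z)$ that norms $T^{v/\gamma}_\infty(Z)$. There you need a stopping-time test sequence, for example $\lambda=\sum_j 2^{jv/q}\chi_{E_j}$. Here $E_j$ is the set of topmost points of $\{z_k: c_k>2^j\}$, meaning those not lying in the enlarged tent of another such point. Each cone meets each $E_j$ in $O(1)$ points. Consequently $\|\lambda\|^q_{T^q_p(Z)}\lesssim\int_{\T}N^v$, where $N(\xi)=\sup_{z_k\in\Gamma(\xi)}c_k$, while the left-hand side is $\gtrsim\int_{\T}N^v$. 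This is where Lemma~\ref{lemma_sup} is the right tool.

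Finally, in the sufficiency of (d), the Coifman--Meyer--Stein pairing of the Carleson sequence $\{c_k^{pt/(p-t)}\}$ with $T^1$ bounds only $\int_{\T}\sum_{z_k\in\Gamma(\xi)}c_k^tb_k^t\,|d\xi|$, i.e.\ the case $q=t$. For $q>t$ you must dualize $T^{q/t}_1(Z)$ against $T^{(q/t)'}_\infty(Z)$. For $q<t$ you need the atomic decomposition of $T^{q/t}_{p/t}(Z)$.
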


\section{Proofs}

\subsection{Proof of Proposition \ref{test function}}
Let us see that $f(z)=\frac{1}{(1-z)\log\left(\frac{e}{1-z}\right)}$ belongs to $AT^q_p$ when $\frac{1}{p}+\frac{1}{q}= 1$ and $p>2$. We will make use of the equivalent norm stated in (\ref{RMpq}).
We start by splitting up the functions as follows.
$$
f(z)=f(z) \chi_{D_1}(z)+f(z) \chi_{D_1^{c}}(z),
$$
where $D_1:=\left\{z\in \D : |z-1|\leq 1\right\}$. Thus, we have that
$$
\rho_{p,q}(f)\leq  \rho_{p,q}(f  \chi_{D_1}) + \rho_{p,q}(f \chi_{D_1^{c}}) \leq \rho_{p,q}(f  \chi_{D_1}) + \frac{1}{2\log\left(\frac{e}{2}\right)}.
$$

Splitting the integrals, we obtain that

\begin{align*}
	&\rho_{p,q}^{q} (f  \chi_{D_1})\leq 2 \int_{0}^{\pi/2}\left(\int_{0}^{1} \frac{\chi_{D_1}(re^{i\theta})\ dr}{|1-re^{i\theta}|^p \log^{p}\left(\frac{e}{|1-re^{i\theta}|}\right)}\right)^{q/p}\ \frac{d\theta}{2\pi}\\
	&\leq 2 \int_{0}^{1}\left(\int_{0}^{1} \frac{\chi_{D_1}(re^{i\theta})\ dr}{|1-re^{i\theta}|^p \log^{p}\left(\frac{e}{|1-re^{i\theta}|}\right)}\right)^{q/p}\ \frac{d\theta}{2\pi}+2 \int_{1}^{\pi/2}\left(\int_{0}^{1} \frac{\chi_{D_1}(re^{i\theta})\ dr}{|1-re^{i\theta}|^p \log^{p}\left(\frac{e}{|1-re^{i\theta}|}\right)}\right)^{q/p}\ \frac{d\theta}{2\pi}\\
	&\leq 2 \int_{0}^{1}\left(\int_{0}^{1} \frac{\chi_{D_1}(re^{i\theta})\ dr}{|1-re^{i\theta}|^p \log^{p}\left(\frac{e}{|1-re^{i\theta}|}\right)}\right)^{q/p}\ \frac{d\theta}{2\pi}+2 \left(\frac{\pi}{\sqrt{3}}\right)^{q} \int_{1}^{\pi/2}\left(\int_{0}^{1} \frac{\chi_{D_1}(re^{i\theta})\ dr}{\theta^p \log^{p}\left(\frac{e\pi}{\sqrt{3} \theta}\right)}\right)^{q/p}\ \frac{d\theta}{2\pi}\\
	&\leq 2 \int_{0}^{1}\left(\int_{0}^{1-\theta} \frac{\chi_{D_1}(re^{i\theta})\ dr}{|1-re^{i\theta}|^p \log^{p}\left(\frac{e}{|1-re^{i\theta}|}\right)}+\int_{1-\theta}^{1} \frac{\chi_{D_1}(re^{i\theta})\ dr}{|1-re^{i\theta}|^p \log^{p}\left(\frac{e}{|1-re^{i\theta}|}\right)}\right)^{q/p}\ \frac{d\theta}{2\pi}\\
	&\quad\quad +  \frac{\pi^q}{(\sqrt{3})^q \log^{q}\left(\frac{e\pi}{\sqrt{3}}\right)}.
\end{align*}

One can show that $|1-re^{i\theta}| \geq \frac{2}{\pi} \sqrt{(1-r)^2+r \theta^2}$ for $r\in [0,1)$ and $\theta\in[0,\pi/2]$. In addition, it is easy to show that
\begin{equation}\label{eq1}
	|1-re^{i\theta}|\geq 
	\begin{cases}
		\frac{\sqrt{3}}{\pi} (1-r),& \text{if}\quad r<1-\theta,\\
		\frac{\sqrt{3}}{\pi} \theta,& \text{if}\quad r\geq 1-\theta.\\
	\end{cases}
\end{equation}

Using the previous inequality \eqref{eq1} and the fact that the function $g(x)=x\log\left(\frac{e}{x}\right)$ is increasing for $0<x<1$, we proceed by estimating the remaining integrals as follows.
\begin{align*}
	&\int_{0}^{1}\left(\int_{0}^{1-\theta} \frac{\chi_{D_1}(re^{i\theta})\ dr}{|1-re^{i\theta}|^p \log^{p}\left(\frac{e}{|1-re^{i\theta}|}\right)}+\int_{1-\theta}^{1} \frac{\chi_{D_1}(re^{i\theta})\ dr}{|1-re^{i\theta}|^p \log^{p}\left(\frac{e}{|1-re^{i\theta}|}\right)}\right)^{q/p}\ \frac{d\theta}{2\pi}\\
	&\leq \left(\frac{\pi}{\sqrt{3}}\right)^q \int_{0}^{1}\left(\int_{0}^{1-\theta} \frac{\chi_{D_1}(re^{i\theta})\ dr}{(1-r)^p \log^{p}\left(\frac{e\pi}{\sqrt{3}(1-r)}\right)}+\int_{1-\theta}^{1} \frac{\chi_{D_1}(re^{i\theta})\ dr}{\theta^p \log^{p}\left(\frac{e\pi}{\sqrt{3}\theta}\right)}\right)^{q/p}\ \frac{d\theta}{2\pi}\\
	&\leq \left(\frac{\pi}{\sqrt{3}}\right)^q \int_{0}^{1}\left(\int_{0}^{1-\theta} \frac{
		\ dr}{(1-r)^p \log^{p}\left(\frac{e\pi}{\sqrt{3}(1-r)}\right)}+\int_{1-\theta}^{1} \frac{ dr}{\theta^p \log^{p}\left(\frac{e\pi}{\sqrt{3}\theta}\right)}\right)^{q/p}\ \frac{d\theta}{2\pi}.
\end{align*}

Now, splitting the integrals into intervals of the form $[e^{-(n+1)},e^{-n}]$, $n\in\N\cup\{0\}$, and making the appropriate estimates, it follows that
\begin{align*}
	&\left(\frac{\pi}{\sqrt{3}}\right)^q \int_{0}^{1}\left(\int_{0}^{1-\theta} \frac{
		\ dr}{(1-r)^p \log^{p}\left(\frac{e\pi}{\sqrt{3}(1-r)}\right)}+\int_{1-\theta}^{1} \frac{ dr}{\theta^p \log^{p}\left(\frac{e\pi}{\sqrt{3}\theta}\right)}\right)^{q/p}\ \frac{d\theta}{2\pi}\allowdisplaybreaks\\
	&\leq \left(\frac{\pi}{\sqrt{3}}\right)^q \sum_{n=0}^{\infty} \int_{1/e^{n+1}}^{1/e^n}\left(\int_{0}^{1-\theta} \frac{
		\ dr}{(1-r)^p \log^{p}\left(\frac{e\pi}{\sqrt{3}(1-r)}\right)}+ \frac{ 1}{\theta^{p-1} \log^{p}\left(\frac{e\pi}{\sqrt{3}\theta}\right)}\right)^{q/p}\ \frac{d\theta}{2\pi}\allowdisplaybreaks\\
	&\leq \left(\frac{\pi}{\sqrt{3}}\right)^q \sum_{n=0}^{\infty} \int_{1/e^{n+1}}^{1/e^n}\left(\sum_{k=0}^{n}\int_{1-1/e^k}^{1-1/e^{k+1}} \frac{
		\ dr}{(1-r)^p \log^{p}\left(\frac{e\pi}{\sqrt{3}(1-r)}\right)}+ \frac{ 1}{\theta^{p-1} \log^{p}\left(\frac{e\pi}{\sqrt{3}\theta}\right)}\right)^{q/p}\ \frac{d\theta}{2\pi}\allowdisplaybreaks\\
	&\leq \left(\frac{\pi}{\sqrt{3}}\right)^q \sum_{n=0}^{\infty} \int_{1/e^{n+1}}^{1/e^n}\Biggl(\sum_{k=0}^{n} e^{(k+1)(p-1)}\int_{1-1/e^k}^{1-1/e^{k+1}} \frac{
		\ dr}{(1-r) \log^{p}\Bigl(\frac{e\pi}{\sqrt{3}(1-r)}\Bigr)}\\
		&\quad\quad + \frac{ 1}{\theta^{p-1} \log^{p}\left(\frac{e\pi}{\sqrt{3}\theta}\right)}\Biggr)^{q/p}\ \frac{d\theta}{2\pi}\allowdisplaybreaks\\
	&= \left(\frac{\pi}{\sqrt{3}}\right)^q \sum_{n=0}^{\infty} \int_{1/e^{n+1}}^{1/e^n}\Biggl(\sum_{k=0}^{n} \frac{e^{(k+1)(p-1)}}{p-1}\left(\frac{1}{\log^{p-1}\left(\frac{ e\pi e^{k}}{\sqrt{3}}\right)}-\frac{1}{\log^{p-1}\left(\frac{ e\pi e^{k+1}}{\sqrt{3}}\right)}\right)\\
	&\quad \quad+ \frac{ 1}{\theta^{p-1} \log^{p}\left(\frac{e\pi}{\sqrt{3}\theta}\right)}\Biggr)^{q/p}\ \frac{d\theta}{2\pi}.
\end{align*}
Applying the fact that $x^{p-1}-y^{p-1}\leq (p-1) (x-y) \sup_{y\leq t\leq x} t^{p-2}$ for $0\leq y\leq x$, we obtain

\begin{align*}
	&\left(\frac{\pi}{\sqrt{3}}\right)^q \sum_{n=0}^{\infty} \int_{1/e^{n+1}}^{1/e^n}\Biggl(\sum_{k=0}^{n} \frac{e^{(k+1)(p-1)}}{p-1}\left(\frac{1}{\log^{p-1}\left(\frac{ \pi e^{k+1}}{\sqrt{3}}\right)}-\frac{1}{\log^{p-1}\left(\frac{ \pi e^{k+2}}{\sqrt{3}}\right)}\right)\\
	&\quad \quad + \frac{ 1}{\theta^{p-1} \log^{p}\left(\frac{e\pi}{\sqrt{3}\theta}\right)}\Biggr)^{q/p}\ \frac{d\theta}{2\pi}\allowdisplaybreaks\\
	&\leq 
	\left(\frac{\pi}{\sqrt{3}}\right)^q \sum_{n=0}^{\infty} \int_{1/e^{n+1}}^{1/e^n}\Biggl(\sum_{k=0}^{n} \frac{e^{(k+1)(p-1)}}{\log^{p-2}\left(\frac{ \pi e^{k+1}}{\sqrt{3}}\right)}\left(\frac{1}{\log\left(\frac{ \pi e^{k+1}}{\sqrt{3}}\right)}-\frac{1}{\log\left(\frac{ \pi e^{k+2}}{\sqrt{3}}\right)}\right)\\
	&\quad \quad + \frac{ 1}{\theta^{p-1} \log^{p}\left(\frac{e\pi}{\sqrt{3}\theta}\right)}\Biggr)^{q/p}\ \frac{d\theta}{2\pi}\allowdisplaybreaks\\
	&= 
	\left(\frac{\pi}{\sqrt{3}}\right)^q \sum_{n=0}^{\infty} \int_{1/e^{n+1}}^{1/e^n}\Biggl(\sum_{k=0}^{n} \frac{e^{(k+1)(p-1)}}{\log^{p-2}\left(\frac{ \pi e^{k+1}}{\sqrt{3}}\right)}\left(\frac{1}{\log\left(\frac{ e\pi e^{k}}{\sqrt{3}}\right)\log\left(\frac{ \pi e^{k+2}}{\sqrt{3}}\right)}\right)\\
	&\quad \quad + \frac{ 1}{\theta^{p-1} \log^{p}\left(\frac{e\pi}{\sqrt{3}\theta}\right)}\Biggr)^{q/p}\ \frac{d\theta}{2\pi}\allowdisplaybreaks\\ 
	&\leq 
	\left(\frac{\pi}{\sqrt{3}}\right)^q \sum_{n=0}^{\infty} \int_{1/e^{n+1}}^{1/e^n}\left(\sum_{k=0}^{n} \frac{e^{(k+1)(p-1)}}{\log^{p-1}\left(\frac{ \pi e^{k+1}}{\sqrt{3}}\right)}\left(\frac{1}{\log\left(\frac{ \pi e^{k+2}}{\sqrt{3}}\right)}\right)+ \frac{ 1}{\theta^{p-1} \log^{p}\left(\frac{e\pi}{\sqrt{3}\theta}\right)}\right)^{q/p}\ \frac{d\theta}{2\pi}\\
	&\leq \left(\frac{\pi}{\sqrt{3}}\right)^q \sum_{n=0}^{\infty} \int_{1/e^{n+1}}^{1/e^n}\left(\sum_{k=0}^{n} \frac{e^{(k+1)(p-1)}}{\log^{p}\left(\frac{ e\pi e^{k}}{\sqrt{3}}\right)}+ \frac{ 1}{\theta^{p-1} \log^{p}\left(\frac{e\pi}{\sqrt{3}\theta}\right)}\right)^{q/p}\ \frac{d\theta}{2\pi}.
\end{align*}
Since $\log\left(\frac{\pi e^{k+1}}{\sqrt{3}}\right)=\log\left(\frac{e\pi}{\sqrt{3}}\right)+k> (k+1)$, for every $k\in \N\cup\{0\}$, it follows that
\begin{align*}
	&\left(\frac{\pi}{\sqrt{3}}\right)^q \sum_{n=0}^{\infty} \int_{1/e^{n+1}}^{1/e^n}\left(\sum_{k=0}^{n} \frac{e^{(k+1)(p-1)}}{\log^{p}\left(\frac{ \pi e^{k+1}}{\sqrt{3}}\right)}+ \frac{1}{\theta^{p-1} \log^{p}\left(\frac{e\pi}{\sqrt{3}\theta}\right)}\right)^{q/p}\ \frac{d\theta}{2\pi}\allowdisplaybreaks\\
	&< \left(\frac{\pi}{\sqrt{3}}\right)^q \sum_{n=0}^{\infty} \int_{1/e^{n+1}}^{1/e^n}\left(\sum_{k=0}^{n} \frac{e^{(k+1)(p-1)}}{(k+1)^p}+ \frac{1}{\theta^{p-1} \log^{p}\left(\frac{e\pi}{\sqrt{3}\theta}\right)}\right)^{q/p}\ \frac{d\theta}{2\pi}\allowdisplaybreaks\\
	&<  \frac{1}{2\pi} \left(\frac{\pi}{\sqrt{3}}\right)^q \sum_{n=0}^{\infty} \frac{1}{e^{n+1}}\left(\sum_{k=0}^{n} \frac{e^{(k+1)(p-1)}}{(k+1)^p}+ \frac{ e^{(n+2)(p-1)}}{(n+2)^p}\right)^{q/p}\allowdisplaybreaks\\
	&\leq \frac{1}{2\pi} \left(\frac{\pi}{\sqrt{3}}\right)^q \sum_{n=0}^{\infty} \frac{1}{e^{n+1}}\left(\sum_{k=0}^{n+1} \frac{e^{(k+1)(p-1)}}{(k+1)^p}\right)^{q/p}<\infty\,.
\end{align*} 
\\
\subsection{Proof of Proposition \ref{first term}}
Let $1/p + 1/q <1$, $n\in\N$ and an $f(z)=\sum_{k\geq 0}a_k z^k \in AT^q_p$. If $N\in \mathbb N$, then we set
  $$S_N^{(n)}=\sum_{k=0}^N \frac{a_k}{n+k+1}.$$ 
  Thus, for $N>M$,
 \begin{align}
\Big|S_N^{(n)} -S_M^{(n)} \Big| &= \Big|\sum_{k=M}^N \frac{a_k}{n+k+1}\Big|
=2\, \Big|\sum_{k=M}^N  \int_{\mathbb D} f(z)\, \overline{z}^k\,|z|^{2n} \,dA(z) \Big|\notag\\
 & =2\, \Big|  \int_{\mathbb D} f(z)\, |z|^{2n}\, \left(\sum_{k=M}^N\,\overline{z}^k\right) \,dA(z) \Big|\notag\\
 & \leq 2\,   \int_{\mathbb D} |f(z)|\, \,\Big|\overline{\frac{(1-z^{N+1})-(1-z^M)}{1-z}}\Big| \,dA(z). \label{p1.2-ineq1}
 \end{align}
Since
$$
\Big|\frac{(1-z^M)}{1-z} - \frac{1}{1-z}\Big| \to 0,\,\,\, M  \to \infty
$$
uniformly in compact subsets of $\D$, 
$$
  \Big|  \frac{(1-z^{N+1})-(1-z^M)}{1-z} \Big|\to 0\,, \quad M \to \infty\,.
$$
Combined with the fact that 
$$
|f(z)|\, \Big|  \frac{(1-z^{N+1})-(1-z^M)}{1-z} \Big|\lesssim |f(z)|\, \Big|  \frac{1}{1-z} \Big|\,,\quad z\in \D,
$$
\begin{equation}
\int_{\mathbb D} |f(z)|\, \,\Big|\frac{1}{1-z}\Big| \,dA(z) \leq \|f\|_{AT^q_p} 
\Big\|\frac{1}{1-z}\Big\|_{AT^{q'}_{p'}}, \label{p1.2-ineq2}
\end{equation}
where $1/p+1/p'=1, 1/q +1/q'=1$ and that
$$
\frac{1}{1-z} \in {AT^{q'}_{p'}} 
$$
 due to \cite[Example 2.3]{AgConRodr} and (\ref{RMpq}) and using that $1/p'+1/q' >1$, we finally get 
 $$
 |S_N^{(n)} -S_M^{(n)}| \to 0 \,, \quad M \to \infty\,.
 $$
 
 In addition, notice that arguing as in \eqref{p1.2-ineq1} and using inequality \eqref{p1.2-ineq2}, we obtain that there is a positive constant $C=C(p,q)$ such that $$\sup_{n\in\N}\left| S_N^{(n)}\right|\leq C \|f\|_{AT_p^q}$$ for every $N\in\N$. Therefore, by using the convergence of the parcial sums, we get that
 $$
 \sup_{n}\left|\sum_{k=0}^{\infty} \frac{a_k}{n+k+1}\right|\leq C \|f\|_{AT_p^q}.
 $$
 \\
 \subsection{Proof of Proposition \ref{Hmu=Imu}}
 Let  $\mu$ be an $1-$CM or equivalently let (\ref{growth of the moments}) be true.  According to \cite[Theorem 3.3]{AgConRodr}, if $f(z)=\sum_{k\geq 0} a_k z^k\in AT^q_p,$ $\frac{1}{p}+\frac{1}{q}<1,$ \,$ p>2$ and $ q>1$, then  there exists a $q_{0} >2$ such that 
 $$
 AT^q_p \subset A^{q_{0}}\,.
 $$
 Therefore, for each $n\in \N$  the series 
 $$
 \sum_{k\geq 0} \mu_{n,k} a_k
 $$
 converges absolutely since 
 $$
 \sum_{k\geq 0}  \mu_{n,k} |a_k| = \sum_{k\geq 0} \int_0^1 t^{n+k} \,d\mu(t) |a_k| \leq C \sum_{k\geq 0} \frac{|a_k|}{k+1} \leq C'\|f\|_{A^{q_{0}}},
 $$
 where in the last inequality we appeal to the strength of (\ref{hardy's inequality Bergman}). Here, $C$ and $C'$ are positive constants. Consequently, not only does the series 
 $$
 \sum_{n=0}^{\infty}\left(\sum_{k=0}^{\infty}
 \mu_{n+k} a_k \right)z^n, \quad z\in \mathbb D\,,
 $$
 represents an analytic function in the unit disc but relation (\ref{identity Hm}) holds as well. 
 
\subsection{Proof of Proposition \ref{well definition of Imu}}
Assume that there is a constant $C>0$ such that  
\begin{equation}\label{Lueck}
\int_0^1 |f(t)| \,d\mu(t) \leq C \|f\|_{AT^q_p} \,,\quad f\in AT^q_p\,.
\end{equation}
Fixing a $z\in \D$,  
$$
\int_0^1 \frac{|f(t)|}{|1-tz|} \,d\mu(t) \leq \frac{1}{1-|z|} \int_0^1 |f(t)| \,d\mu(t)\leq \frac{C}{1-|z|}\|f\|_{AT^q_p}\,.
$$
So, (\ref{Lueck}) implies that the $\mathcal I_{\mu}$ is well defined in the spaces of our interest. Thinking the other way around, if we assume that the integral (\ref{Im(f)}) is well defined for any $ f\in  AT^q_p$ and for any $z\in \D$, then we set $z=0$. It holds that 
$$
\int_0^1 |f(t)| \,d\mu(t) < \infty\,.
$$
That and an application of the uniform boundedness principal on the family of operators 
$$
T_{s} \, : \,AT^q_p \rightarrow L^1([0,1), \mu) ,
$$ 
where $T_{s}(f)= f\, \chi_{\{0<|z|<s\}}\,, s\in[0,1),$
leads to the strength of (\ref{Lueck}). As a consequence, the well definition of the $\mathcal I_{\mu}$ is equivalent to (\ref{Lueck}).
The positive Borel measures on $\D$ that satisfy the inequality 
\begin{equation*}
	\int_{\D} |f(z)| \,dA(z) \leq C \|f\|_{AT^q_p} \,,\quad f\in AT^q_p\, ,
\end{equation*}
have been characterized in Theorem~\ref{LAG}.
Adapting that result to our setting, we get that (\ref{Lueck}) is equivalent to the 
condition \eqref{final}.
Moreover, under the condition \eqref{final} and after having fixed a $z\in \D$, it is true that 
\begin{equation*}
\I_{\mu}(f)(z)=\int_0^1\frac{f(t)}{1-tz}d\mu(t)=\sum_{n\geq 0}  \left(\int_0^1 f(t) t^n \,d\mu(t)\right) z^n \,,\quad f\in AT^q_p
\end{equation*}
and that $\I_{\mu}(f) \in \H(\D)$\,.

\subsection{Proof of Theorem  $1.5$.}
\ \newline
\begin{Prf}\textit{Theorem \ref{Theorem I}(i)}
First, assume that $\mathcal{I}_{\mu}: AT_p^q\rightarrow AT_p^q$ is bounded. Consider the family of functions
 $$
f_\alpha(z)=\frac{(1-\alpha)^{\frac{1}{p'}+\frac{1}{q'}}}{(1-\alpha z)^2},\quad z \in \D
$$ where $\alpha \in [0,1)$ and $1/p'=1-1/p, 1/q'=1-1/q$\,. Then  $2>\frac{1}{p'}+\frac{1}{q'}>1>\frac{1}{p}+\frac{1}{q}$. 
So, in view of (\ref{RMpq}) and (\ref{estimation}), we get
$$
\sup_{\alpha \in [0,1)}\|f_{\alpha}\|_{AT^q_p} \lesssim 1\,.
$$

The assumption of the boundedness of $\mathcal{I}_{\mu}$ and the duality (\ref{dual}) imply that 
  \begin{equation*}
 \|\mathcal{I}_\mu(f_\alpha) \|_{AT_{p}^{q}}=\sup_{\| g \|_{AT_{p'}^{q'}}\leq 1} \left|\int_{\D} \overline{g(w)} \mathcal{I}_\mu(f_\alpha)(w)\ dA(w) \right| \lesssim 1
  	\end{equation*}
  for every $\alpha \in [0,1)$.
  
  Now, we set as $g_{\alpha}(z)=\frac{(1-\alpha)^{\frac{1}{p}+\frac{1}{q}}}{(1-\alpha z)^2}$, still  $\alpha \in [0,1)$. According to (\ref{estimation}) these belong to  the unit ball of $AT^{q'}_{p'}$. 
  Thus, 
  \begin{equation*}
  	\begin{split}
  		  		\int_{\D} \overline{g_{\alpha}(z)}\, \mathcal{I}_\mu(f_\alpha)(z)\ dA(z) & =
  		  		\int_{0}^1 \frac{1}{\pi} \int_0^{2\pi}\overline{g_{\alpha}(re^{i\theta})}\, \int_0^1 \frac{f_{\alpha}(t)}{1-tre^{i\theta}} \,d\mu(t)\ d\theta\, r dr \\
  		  		& =\int_{0}^1 \int_{0}^{1} f_{\alpha}(t) \frac{1}{\pi} \int_0^{2\pi} \frac{\overline{g_{\alpha}(re^{i\theta})}}{1-tre^{i\theta}}\,d\theta \,d\mu(t)\, r dr\\
  		  		& =  \int_{0}^1 \int_{0}^{1} f_{\alpha}(t) 
  		  	\overline{\left( \frac{1}{\pi} \int_{0}^{2\pi}
  		  		 \frac{g_{\alpha}(re^{i\theta})}{1-tre^{-i\theta}}\ d\theta\right)} \,d\mu(t)\, r dr\\
  	  		 &= 2 \int_{0}^1 \int_{0}^{1} f_{\alpha}(t) g_{\alpha}(r^2 t) d\mu(t) r dr\,.
  		\end{split}
  	\end{equation*}
  The positivity of the integrand allows a change in the order of integration in the last representation. Consequently, for $\alpha \in [0,1)$, we have
  \begin{equation*}
  	\begin{split}
  		\int_{\D} \overline{g_{\alpha}(z)}\, \mathcal{I}_\mu(f_\alpha)(z)\ dA(z) & = 2 \int_{0}^1 f_{\alpha}(t)  \int_{0}^{1}  g_{\alpha}(r^2 t) r dr\, d\mu(t)\,.
  \end{split}
\end{equation*}


Combining the above, it follows that
\begin{align*}
1\gtrsim &\,2  \int_0^1 f_\alpha(t) \int_0^1 \ g_\alpha(r^2 t)\ r dr \ d\mu(t) \,=\,\int_{0}^{1} \frac{(1-\alpha)^{2}}{(1-\alpha t)^2} \int_{0}^{1} \frac{2r}{(1-\alpha r^2 t)^2}\ dr \ d\mu(t)\\
&= \,\int_{0}^{1} \frac{(1-\alpha)^{2}}{(1-\alpha t)^2} \int_{0}^{1} \frac{1}{(1-\alpha r t)^2}\ dr \ d\mu(t) \geq \,\int_{0}^{1} \frac{(1-\alpha)^{2}}{(1-\alpha t)^2} \int_{\alpha}^{1} \frac{1}{(1-\alpha r t)^2}\ dr \ d\mu(t) \\
& \geq  \,\int_{0}^{1} \frac{(1-\alpha)^{3}}{(1-\alpha^2 t)^4} \ d\mu(t) \geq  \int_\alpha^1 \frac{(1-\alpha)^{3}}{ (1-\alpha^2 t)^{4}}\ d\mu(t) \gtrsim \frac{\mu([\alpha,1))}{1-\alpha}.
\end{align*}
Hence, $\mu$ is an $1$-CM.

Now let us assume that $\mu$ is $1$-CM. Then, the operator $\mathcal{I}_\mu$ is well defined on $AT^q_p$.
To prove that $\mathcal{I}_\mu $ is bounded on $AT^q_p$, it is enough to show that
$$
|\langle \mathcal{I}_\mu(f),g \rangle|\lesssim \|f\|_{AT^q_p} \|g\|_{AT^{q'}_{p'}}
$$
for every $f\in AT^q_p$ and $g\in AT^{q'}_{p'}$. So,
\begin{align*}
|\langle \mathcal{I}_\mu(f),g \rangle|&= \left|\int_{\D} \mathcal{I}_{\mu}(f)(z)\overline{g(z)} d A(z)\right|= 2 \left|\int_0^1 \int_0^1 f(t)\ \overline{g(r^2 t)}\ d\mu(t)\ r dr\right|\\
&\leq  \int_0^1 |f(t)| \int_0^1 |g(r^2 t)|\ rdr\ d\mu(t)\\
&= \int_0^1 |f(t)| G(t)\ d\mu(t), 
\end{align*}
where $G(t):=\int_0^1 |g(r^2 t)|\ rdr$. Applying Hölder's inequality, we get
\begin{align*}
\int_0^1 |f(t)| G(t)\ dt&\leq \left(\int_0^1 |f(t)|^q (1-t)^{q/p}\ d\mu(t)\right)^{1/q}\left(\int_0^1 |G(t)|^{q'} (1-t)^{{- q'/p}}\ d\mu(t)\right)^{1/q'}\\
&=:I_1\cdot I_2.
\end{align*}
Now, we proceed with the estimations of $I_1$ and $I_2$.
By Theorem~\ref{LAG}, we have that $I_1\lesssim \|f\|_{AT_p^q}$ if and only if the sequence $
\mu_{n,j}=2^{n}\mu(R_{n,j}\cap [0,1))
$ satisfies the following conditions, where $R_{n,j}$ denotes the Luecking regions (see Example \ref{LueckingCenters}):
\begin{enumerate}
	\item[(1)] If $p\leq q$, then $\{\mu_{n,j}\}\in\ell^\infty$.
	\item[(2)] If $q<p$, then $\{\mu_{n,j}\}\in T_{\frac{p}{p-q}}^{\infty}(Z)$.
\end{enumerate}
 By using $\mu$ is $1$-Carleson measure, let us see that the previous conditions are satisfied:
\begin{enumerate}
	\item[(1)] If $p\leq q$, then $
	\mu_{n,j}\lesssim 2^{n} \frac{1}{2^n}=1.$
	\item[(2)] If $q<p$, then
	\begin{align*}
	&\sup_{\xi\in \T, h\in (0,1)} \frac{1}{h} \sum_{z_{n,j}\in D(\xi,h)\cap \D} \frac{|\mu_{n,j}|^{\frac{p}{p-q}}}{2^{n}}  \lesssim  \sup_{k_0\geq 0} 2^{k_0} \sum_{k\geq k_0} \frac{1}{2^{k}}=2<\infty.
	\end{align*}
\end{enumerate}
Therefore, we get that $I_1\lesssim\|f\|_{AT_p^q}$.

Now, let us show that $I_2\lesssim \|g\|_{AT_{p'}^{q'}}$. First, we split $I_2$ into the follows two integrals that we will estimate separately.
\begin{align*}
I_2^{q'}\leq \int_0^{1/2} |G(t)|^{q'} (1-t)^{-q'/p}\ d\mu(t) + \int_{1/2}^1 |G(t)|^{q'} (1-t)^{-q'/p}\ d\mu(t)=: I_{2,1}+I_{2,2}.
\end{align*}

Obviously, it holds that $I_{2,1}\lesssim \|g \|_{AT_{p'}^{q'}}^{q'}$. In addition,
\begin{align*}
I_{2,2}&\leq  \int_{1/2}^{1} (1-t)^{-q'/p} \left(\int_0^t |g(u)|\ du\right)^{q'}\ d\mu(t)\\
&=  \int_{1/2}^{1} (1-t)^{-q'/p} \left(\int_{1-t}^1 |g(1-u)|\ du\right)^{q'}\ d\mu(t)\\
&\leq  \int_{0}^{1} (1-t)^{-q'/p} \left(\int_{1-t}^1 |g(1-u)|\ du\right)^{q'}\ d\mu(t)\\
&=: \int_{0}^{1} H(t)\ d\mu(t).
\end{align*}
Notice that, for $\gamma\in [0,1)$, using integration by parts we have
\begin{align*}
\int_{0}^{1} H(t)\ d\mu(t)=H(0)\mu([0,1)) -\lim\limits_{t\rightarrow 1^{-}} H(t)\mu([t,1))+ \int_{0}^{1} \mu([s,1))H'(s)\ ds.
\end{align*} 
By means of Lemma \ref{growth}, we establish  $\int_{0}^{t} |g(x)|\ dx= \text{o}\left((1-t)^{1-\frac{1}{p'}-\frac{1}{q'}}\right)$ as $t\rightarrow {1}^{-}$, which together with the fact that $\mu$ is a $1$-Carleson measure, this leads to the following 
\begin{align*}
\lim\limits_{t\rightarrow 1^{-}} H(t)\mu([t,1))\lesssim \lim\limits_{t\rightarrow 1^{-}} H(t) (1-t)= \lim\limits_{t\rightarrow 1^{-}} (1-t)^{1-\frac{q'}{p}} \left(\int_{0}^{t} |g(x)|\ dx\right)^{q'}=0.
\end{align*} Therefore,
\begin{align*}
	\int_{0}^{1} H(t)\ d\mu(t)&=\int_{0}^{1} \mu([s,1))H'(s)\ ds\lesssim\int_{0}^{1} (1-s)H'(s)\ ds\\
	&=\lim\limits_{t\rightarrow 1^{-}} (1-t)H(t)-H(0)+\int_{0}^{1} H(s)\ ds=\int_{0}^{1} H(s)\ ds\\
	&=\int_{0}^{1}(1-t)^{-q'/p} \left(\int_{1-t}^1 |g(1-u)|\ du\right)^{q'}\ dt.
\end{align*} 
By applying classical Hardy's inequality \cite[p. 244-245]{HLP}, we get that
\begin{align*}
I_{2,2}\lesssim\int_{0}^{1}(1-t)^{-q'/p} \left(\int_{1-t}^1 |g(1-u)|\ du\right)^{q'}\ dt\leq \left(\frac{q'}{1-\frac{q'}{p}}\right)^{q'}\int_{0}^{1} |g(t)|^{q'} (1-t)^{q'-\frac{q'}{p}}\ dt.
\end{align*}
By Theorem~\ref{LAG}, we have that $I_{2,2}\lesssim \|g\|_{AT_{p'}^{q'}}^{q'}$ if and only if the sequence
\begin{align*}
\lambda_{n,j}=|R_{n,j}\cap [0,1)|\left(\frac{1}{2^n}\right)^{q'/p'}\left(\frac{1}{2^n}\right)^{-\frac{q'}{p'}-1}\asymp 1
\end{align*}
satisfies the following conditions:
\begin{enumerate}
	\item[(1')] If $p'> q'$, then $\{\lambda_{n,j}\}\in T_{\frac{p'}{p'-q'}}^{\infty}(Z)$.
	\item[(2')] If $p'\leq q'$, then $\{\lambda_{n,j}\}\in \ell^{\infty}$.
\end{enumerate}
By using that $\mu$ is a $1$-Carleson measure, one can check easily that the previous conditions are satisfied:
\begin{enumerate}
	\item[(1')] If $q'<p'$, then
	\begin{align*}
		&\sup_{\xi\in \T, h\in (0,1)} \frac{1}{h} \sum_{z_{n,j}\in D(\xi,h)\cap \D} \frac{|\lambda_{n,j}|^{\frac{p'}{p'-q'}}}{2^{n}} \asymp \sup_{k_0\geq 0} 2^{k_0} \sum_{k\geq k_0} \frac{1}{2^{k}}
		=\sup_{k_0\geq 0} 2^{k_0} \frac{1/2^{k_0}}{1-(1/2)}=2<\infty.
	\end{align*}
		\item[(2')] If $p'\leq q'$, then obviously $\{\lambda_{n,j}\}\in \ell^{\infty}$.
\end{enumerate}
 Hence, we conclude that $\mathcal I_\mu$ is bounded on $AT_p^q$.
\end{Prf}
\ \newline

\begin{Prf}\textit{Theorem \ref{Theorem I}(ii)}
First, assume  $\mathcal{I}_{\mu}: AT_p^q\rightarrow AT_p^q$ is compact. 
Let us consider a sequence of real numbers $\{a_n\} \subset [0,1)$ such that $a_n\to 1$ as $n\to \infty$ and the family of test functions defined as  $$f_n(z)=\frac{(1-a_n)^{\frac{1}{p'}+\frac{1}{q'}}}{(1-a_n z)^2}.$$ 
Since $2>\frac{1}{p'}+\frac{1}{q'}> 1 >\frac{1}{p}+\frac{1}{q}$, it follows that
 $$\sup_{n}\|f_{n} \|_{AT_p^q}\lesssim 1 \,.$$ 
 Moreover, it is clear that $f_{n}\to 0, n \to \infty,$ uniformly on compact subsets of $\D$. Then, \cite[Lemma 3.7]{Tjani} yields 
 $$\lim\limits_{n\to\infty} \|\mathcal{I}_{\mu}(f_{n}) \|_{AT_p^q} =0\, .$$ This means that for each $\varepsilon >0$ there exists a $n_0\in \N$ such that  $\|\mathcal{I}_{\mu}(f_{n}) \|_{AT_p^q} <\varepsilon$, for every $n\geq n_0$. 
For such an $n$, mimicking the argument applied in the necessity part of Theorem \ref{Theorem I}(i) we obtain $$\frac{\mu([a_n,1))}{1-a_n}\lesssim \|\mathcal{I}_{\mu}(f_{n}) \|_{AT_p^q} \lesssim \varepsilon,$$ so that $\mu $ is an $1-$VCM.

Reciprocally, let us assume that $\mu $ is a Vanishing $1-$Carleson measure. Consider $\{f_{n}\}_{n=0}^{\infty}\subset AT_p^q$ such that $\sup\limits_{n\in \N}\|f_n \|_{AT_p^q} <\infty$ and $f_n\to 0$ uniformly on compact subsets of $\D$. We  will prove that $\lim\limits_{n\to\infty} \|\mathcal{I}_{\mu}(f_n) \|_{AT_p^q} =0$.
Let $\varepsilon>0$.
Arguing as in the sufficiency part of Theorem \ref{Theorem I}(i), 
\begin{align*}
\|\mathcal{I}_{\mu}(f_n)\|_{AT_p^q}&= \sup\limits_{\|g\|_{AT_{p'}^{q'}}\leq 1}|\langle \mathcal{I}_\mu(f_n),g \rangle|\\& \leq \sup\limits_{\|g\|_{AT_{p'}^{q'}}\leq 1}  \left(\int_0^1 |f_n(t)|^q (1-t)^{\frac{p}{q}}\ d\mu(t)\right)^{1/q}\left(\int_0^1 |G(t)|^{q'} (1-t)^{{-\frac{q'}{p}}}\ d\mu(t)\right)^{1/q'},
\end{align*}
where $G(t):=\int_0^1 |g(r^2 t)|\ rdr$. By following the lines of Theorem \ref{Theorem I}(i), the proof can be reduced to show that 
$$\int_0^1 |f_n(t)|^q (1-t)^{\frac{q}{p}}\ d\mu(t)\lesssim \varepsilon.$$
Since $\mu $ is a Vanishing $1-$Carleson measure, there exists $\eta_0 \in (0,1)$ such that 
\begin{equation}
    \label{vanishing}
 \frac{\mu([t,1))}{1-t}<\varepsilon\text{ for all }\eta_0\leq t<1 .  
\end{equation}
Next, notice that $
    \lim\limits_{ t \to 1^{-}} |f_n(t)|^q(1-t)^{\frac{q}{p}} \mu ([t,1))=0,
$
since $ |f_n(t)| =o\left((1-t)^{-\frac{1}{p}-\frac{1}{q}}\right)$ as $t\rightarrow {1}^{-}$ (see Lemma \ref{growth}) and $\mu $ is a Vanishing $1-$Carleson measure. So, there exists $\eta_1\in (0,1)$ such that 
\begin{equation}
    \label{limit}
    |f_n(t)|^q(1-t)^{\frac{q}{p}} \mu ([t,1))<\varepsilon, \text{ for all }t\geq \eta_1.
\end{equation}

Let $\eta=\max\{\eta_0, \eta_1\}$. Then,
$$\int_0^1 |f_n(t)|^q (1-t)^{\frac{q}{p}}\ d\mu(t)=\int_0^{\eta} |f_n(t)|^q (1-t)^{\frac{q}{p}}\ d\mu(t)+\int_{\eta}^1 |f_n(t)|^q (1-t)^{\frac{q}{p}}\ d\mu(t)=:I_1+ I_2.$$
Since $f_n\to 0$ uniformly on compact subsets on $\D$, there exists $n_0\in \N$ such that for all $n\geq n_0$, $\sup\limits_{t \in [0, \eta]}|f_n(t)|<\varepsilon$. Therefore, on the one hand, for $n\geq n_0$,
$$I_1\leq \left(\sup\limits_{t \in [0, \eta]}|f_n(t)|\right)^q \mu([0, \eta])\lesssim\varepsilon.$$
On the other hand, denote $H_n(t)= |f_n(t)|^q (1-t)^{\frac{q}{p}}$. By an integration by parts, \eqref{limit} and \eqref{vanishing}, 
$$I_2\leq \varepsilon + \int_{\eta}^1 H_n'(t)\mu([t,1))dt\leq \varepsilon\left(1+ \int_{\eta}^1 H_n'(t)(1-t)dt\right).$$
By using Lemma \ref{growth} and an integration by parts,
$$ \int_{\eta}^1 H_n'(t)(1-t)dt\lesssim \int_{\eta}^1 |f_n(t)|^q (1-t)^{\frac{q}{p}} dt. $$
By Theorem~\ref{LAG}, we have that $I_{2}\lesssim \varepsilon \sup\limits_k \|f_k\|_{AT_p^q}^q$ if and only if the sequence
\begin{align*}
\lambda_{n,j}=|R_{n,j}\cap [0,1)| 2^{-n\frac{q}{p}}2^{-n\left(-\frac{q}{p}-1\right)}\asymp 1
\end{align*}
satisfies the following conditions:
\begin{enumerate}
	\item[(1)] If $p\leq q$, then $\{\lambda_{n,j}\}\in \ell^{\infty}$. This condition obviously holds. 
	\item[(2)] If $p> q$, then $\{\lambda_{n,j}\}\in T_{\frac{p}{p-q}}^{\infty}(Z)$. Indeed, \begin{align*}
		&\sup_{\xi\in \T, h\in (0,1)} \frac{1}{h} \sum_{z_{n,j}\in D(\xi,h)\cap \D}(1-|z|^2) \asymp \sup_{k_0\geq 0} 2^{k_0} \sum_{k\geq k_0} 2^{-k} =\sup_{k_0\geq 0} 2^{k_0} \frac{2^{-k_0}}{1-(1/2)}=2<\infty.
	\end{align*}
\end{enumerate}
\end{Prf}

\subsection{Proof of Theorem \ref{about Hm}}
\ \newline

\begin{Prf}\textit{the necessary condition for the boundedness of the generalized Hilbert operator $\mathcal H_{\mu}$.}
	
Let $q \in (1,\infty), \, p>2,\, 1/p+1/q <1.$ We assume that $\mathcal H_{\mu}$ is well defined and bounded on the $AT^q_p$ space.
		
		Consider the family of functions $$f_\alpha(z)=\frac{(1-\alpha)^{\frac{1}{p'}+\frac{1}{q'}}}{(1-\alpha z)^2}=(1-\alpha)^{\frac{1}{p'}+\frac{1}{q'}}\sum_{n=0}^{\infty} (n+1) \alpha^n z^n, \quad z \in \D,$$ where $\alpha \in [0,1),$ $1/p^{'} + 1/p =1$ and $ 1/q^{'} + 1/q =1$. Notice that $1/p + 1/q < 1 <  \frac{1}{p'}+\frac{1}{q'} <2.$
				 It holds that 
		 $$\sup_{\alpha \in (0,1)}\|f_{\alpha}\|_{AT^q_p} < \infty\,.$$  
		 Reasoning the same way,
		 $$
		 \|f_{\alpha}\|_{AT^{q^{'}}_{p^{'}}} \lesssim (1-\alpha)^{\frac{2}{p'}+\frac{2}{q'}-2}
		 $$
		 for every $\alpha \in (0,1).$
		 
		In account of the duality described in (\ref{dual}) 
and the  boundedness of $\mathcal H_{\mu}$, we get that
	$$
	\left|\int_{\D} \overline{g(z)} \mathcal{H}_\mu(f_\alpha)(z)\ dA(z) \right|\lesssim \|\mathcal H_{\mu}(f_{\alpha})\|_{AT^q_p} \,\,\|g\|_{AT^{q^{'}}_{p^{'}}} \lesssim 
		\|f_{\alpha}\|_{AT^q_p} \,\,\|g\|_{AT^{q^{'}}_{p^{'}}} \lesssim 
		\,\,\|g\|_{AT^{q^{'}}_{p^{'}}}
	$$
	for any $g \in AT^{q^{'}}_{p^{'}}$.
		In particular, setting 
				$$
				g(z)=f_\alpha(z),
				$$ 
		it follows that for every $\alpha \in (0,1)$
	\begin{align*}
		&(1-\alpha)^{\frac{2}{p'}+\frac{2}{q'}-2}\gtrsim\left|\int_{\D} \overline{f_\alpha(z)} \mathcal{H}_\mu(f_\alpha)(z)\ dA(z)\right|\\
		&\quad =\left|\int_{\D} \overline{\left(\sum_{m=0}^{\infty} (1-\alpha)^{\frac{1}{p'}+\frac{1}{q'}} (m+1)\alpha^m z^m\right)} \left(\sum_{n=0}^{\infty}\left(\sum_{k=0}^{\infty} \mu_{n+k}(1-\alpha)^{\frac{1}{p'}+\frac{1}{q'}} (k+1)\alpha^k\right)z^n\right)\ dA(z)\right|\\
		&\quad =\, (1-\alpha)^{\frac{2}{p'}+\frac{2}{q'}}\, \left|\int_{\D}  \left(\sum_{m,n=0}^{\infty} (m+1) \left(\sum_{k=0}^{\infty} \mu_{n+k} (k+1) \alpha^{k+m}\right)z^n \overline{z}^m\right)\ dA(z)\right|\\
		& \quad =\, (1-\alpha)^{\frac{2}{p'}+\frac{2}{q'}} \left|\int_{0}^1\, \,\frac{1}{\pi}\,\int_{0}^{2\pi}\, \left(\sum_{m,n=0}^{\infty} (m+1) \left(\sum_{k=0}^{\infty} \mu_{n+k} (k+1) \alpha^{k+m}\right) {r^n e^{in\theta}} {r^m e^{-im\theta}} \right)\ d\theta\,r\, dr \right| \\
		&\quad = 2 (1-\alpha)^{\frac{2}{p'}+\frac{2}{q'}} \int_{0}^{1}  \left(\sum_{n=0}^{\infty}  (n+1) \left(\sum_{k=0}^{\infty} \mu_{n+k}(k+1) \alpha^{k+n}\right) r^{2n+1}\right)\ dr.
	\end{align*}
 Since, for $\alpha \in [0,1) $  and any $n,k$, it holds that 
 $$
 \mu_{n+k} = \int_0^1 t^{n+k} d\mu(t) \geq \int_{\alpha}^1 t^{n+k} d\mu(t) 
 \geq \alpha^{n+k} \mu([\alpha,1)),
 $$ 
 we get that 
	\begin{align*}
		&(1-\alpha)^{\frac{2}{p'}+\frac{2}{q'}-2}\gtrsim\left|\int_{\D} \overline{f_\alpha(w)} \mathcal{H}_\mu(f_\alpha)(w)\ dA(w)\right|\\
		&\quad \geq 2\, (1-\alpha)^{\frac{2}{p'}+\frac{2}{q'}} \, \mu([\alpha,1))\, \int_{0}^{1}  \left(\sum_{n=0}^{\infty}\, (n+1) \left(\sum_{k=0}^{\infty} (k+1) \alpha^{2(k+n)}\right) r^{2n+1}\right) \ dr \\
		&\quad = 2\, (1-\alpha)^{\frac{2}{p'}+\frac{2}{q'}}\, \mu([\alpha,1))\,\left(\sum_{k=0}^{\infty} (k+1) \alpha^{2k}\right)\,\int_{0}^{1}  \left(\sum_{n=0}^{\infty} (n+1) \alpha^{2n} r^{2n+1}\right) \ dr\\
		&\quad = (1-\alpha)^{\frac{2}{p'}+\frac{2}{q'}}\, \mu([\alpha,1))\,\left(1-\alpha^2\right)^{-2}\int_{0}^{1}  \frac{r }{(1-\alpha^2 r^2)^2}\ dr\\
		&\quad \geq  (1-\alpha)^{\frac{2}{p'}+\frac{2}{q'}}\, \mu([\alpha,1))\,\left(1-\alpha^2\right)^{-2}\int_{\alpha}^{1}  \frac{\alpha }{(1-\alpha^2 r^2)^2}\ dr\\
		&\quad \gtrsim (1-\alpha)^{\frac{2}{p'}+\frac{2}{q'}}\mu([\alpha,1))\,\left(1-\alpha \right)^{-2} \frac{\alpha}{(1-\alpha)}\\
		&\quad =\alpha (1-\alpha)^{\frac{2}{p'}+\frac{2}{q'}} \frac{\mu([\alpha,1))}{(1-\alpha)^3}.
			\end{align*}

	Therefore, we have that
		$$
			\frac{\mu([\alpha,1))}{(1-\alpha)} \lesssim 1
		$$
	for all $\alpha\in [0,1)$, that is, $\mu$ is a $1$-Carleson measure.
\end{Prf}	
\newline
\\
\begin{Prf}\textit{the necessary condition for the compactness of the generalized Hilbert operator $\mathcal H_{\mu}$.}
We assume that $\mathcal{H}_{\mu}: RM(p,q)\rightarrow RM(p,q)$ is compact. Let us consider $\{\alpha_n\} \subset [0,1)$ such that $ a_n \to 1, n \to \infty,$ and the family of functions
 $$
 f_{n}(z)=\frac{(1-\alpha_n)^{\frac{1}{p'}+\frac{1}{q'}}}{(1-\alpha_n z)^2}\,, \quad z \in \D\,.
 $$
  Notice that $\sup_n\|f_n\|_{AT^q_p}\lesssim 1$, since $2>\frac{1}{p'}+\frac{1}{q'}>\frac{1}{p}+\frac{1}{q}$. Moreover, it is clear that $f_{n}\rightarrow 0, n \to \infty,$ uniformly on compact sets of the unit disc.  Then, \cite[Lemma 3.7]{Tjani} yields $\lim\limits_{n\rightarrow \infty}\|\mathcal{H}_{\mu}(f_{n})\|_{AT_p^q}=0$. Thus, if $\varepsilon >0$ then there is $n_0 \in\N$ such that for every $n \geq n_0$ , $\|\mathcal{H}_{\mu}(f_n)\|_{AT_p^q}\leq \varepsilon$. Let $ n\geq n_0$. Then, repeating the argument applied in order to prove the  necessary condition of 
  Theorem~\ref{about Hm} (i), we get 
  \begin{align*}
  	\frac{\mu([{\alpha_n},1))}{(1-{\alpha_n})}\lesssim \|\mathcal{H}_{\mu}(f_n)\|_{AT_p^q} \lesssim \varepsilon
  \end{align*}
 that is, $\mu$ is a $1$-Vanishing Carleson measure.
\end{Prf}
\section{Norm estimation of Hilbert operator on $AT_{p}^{q}$.}
In the current section, we will provide estimates of the norm of the Hilbert operator acting on the $AT_{p}^{q}$ spaces. In order to obtain the the upper bound, we will make use of some tools such as the factorization of the analytic tent spaces $AT_p^q$, which can found in \cite{CohnVerbitsky}, and the estimation of the norm of the composition operator for a certain class of tent spaces, which will be proved below. Regarding the lower bound, we will follow the same reasoning as \cite{DJV} to obtain the one that we believe is most appropriate. 

\begin{lemma}
\label{composition on triebel spaces}
    Let $2 \leq p < \infty$ and $\varphi$ an analytic self-map
    of the unit disc $\D$. The composition operator $C_{\varphi}$ is bounded on $AT_p^{\infty}$ and $$\|C_{\varphi}\|\lesssim\frac{1}{(1-|\varphi(0)|)^{\frac{1}{p}}}.$$
\end{lemma}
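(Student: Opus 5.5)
We plan to use the derivative characterization \eqref{TentCMDer} of $AT^\infty_p$: up to constants, $\|f\|^p_{AT^\infty_p}$ is $|f(0)|^p$ plus the Carleson constant of the measure $d\nu_f(z)=|f'(z)|^p(1-|z|)^p\,dA(z)$. Thus $\|C_\varphi f\|^p_{AT^\infty_p}$ is comparable to $|f(\varphi(0))|^p$ plus the Carleson constant of
$$d\nu_{f\circ\varphi}(z)=|f'(\varphi(z))|^p|\varphi'(z)|^p(1-|z|)^p\,dA(z).$$
For the point evaluation term, the growth estimate for $AT^\infty_p$ (the analogue of Lemma~\ref{growth}, or directly: $d\nu_f$ Carleson implies $f$ is Bloch-type with $|f(z)|\lesssim \|f\|\log\frac{e}{1-|z|}$; more crudely, the subharmonic mean value over a disc of radius $\asymp 1-|a|$ with the Carleson condition on $|f|^p dA$ gives $|f(a)|\lesssim (1-|a|)^{-1/p}\|f\|$) yields $|f(\varphi(0))|\lesssim (1-|\varphi(0)|)^{-1/p}\|f\|_{AT^\infty_p}$, which is the claimed factor.

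For the derivative term we will use the Schwarz--Pick lemma, $|\varphi'(z)|(1-|z|)\lesssim 1-|\varphi(z)|^2$, to write $|f'(\varphi)|^p|\varphi'|^p(1-|z|)^p\le |f'(\varphi(z))|^{p-2}(1-|\varphi(z)|^2)^{p-2}\,|(f\circ\varphi)'(z)|^2(1-|z|)^2$. Since $d\nu_f$ is Carleson, $f$ is Bloch: $|f'(w)|(1-|w|)\lesssim \|f\|_{AT^\infty_p}$ (subharmonicity of $|f'|^p$ on a hyperbolic disc). This is where $p\ge2$ is used: the factor $(|f'(\varphi)|(1-|\varphi|^2))^{p-2}$ is bounded by $\|f\|^{p-2}$. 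So it suffices to bound the Carleson constant of $|(f\circ\varphi)'|^2(1-|z|)^2dA$ by $\|f\|^2_{AT^\infty_p}(1-|\varphi(0)|)^{-2/p}$ (up to constants). Now $|(f\circ\varphi)'|^2(1-|z|^2)\,dA$ being Carleson with constant $C$ is exactly the Garsia/Littlewood--Paley characterization of $f\circ\varphi\in BMOA$; by Hölder and $p\ge 2$, the $AT^\infty_p$ condition is stronger than the $AT^\infty_2$-type one with exponent $(1-|z|)^2$, and we aim to reduce to the BMOA-type statement: the Carleson constant of $|g'|^2(1-|z|)^2dA$ is $\lesssim \|g\|_{BMOA}^2$-like. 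Composition operators are bounded on BMOA with $\|C_\varphi\|\lesssim \log\frac{e}{1-|\varphi(0)|}$ for the seminorm part, which is dominated by $(1-|\varphi(0)|)^{-1/p}$.

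Concretely the steps are: (1) point evaluation bound; (2) Bloch bound for $AT^\infty_p$; (3) Schwarz--Pick plus Bloch to drop to exponent $2$; (4) control of $\sup_I |I|^{-1}\int_{S(I)}|(f\circ\varphi)'|^2(1-|z|)^2dA$ via conformal invariance / Littlewood subordination applied on Carleson boxes, using that $f$ satisfies the corresponding condition (Hölder from exponent $p$ to exponent $2$ on each box, $p\ge2$). The main obstacle is step (4): transferring a Carleson box estimate through $\varphi$. I would first try the standard trick: for a box $S(I)$ with center $a$, compose with the disc automorphism $\sigma_a$, use $\int_{\D}|(g\circ\sigma_a)'|^2\log\frac1{|z|}dA \le$ Littlewood subordination applied to $f\circ(\varphi\circ\sigma_a)$ centered at $\varphi(\sigma_a(0))$, and then bound the resulting Garsia-type quantity of $f$ at the point $\varphi(a)$ by the Carleson constant of $d\nu_f$ (again via Hölder and $p\ge 2$), with the extra factor $(1-|\varphi(0)|)^{-2/p}$ absorbing the distortion when $a$ near $0$.
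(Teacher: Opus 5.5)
Your step (2) is false in the normalization you are using, and steps (3)–(4) depend on it.

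In \eqref{TentCMDer} the Carleson measure is $|f'(z)|^p(1-|z|)^p\,dA(z)$ (equivalently, $|f|^p\,dA$ is a $1$-CM). The subharmonicity argument you cite only gives
$|f'(w)|^p\lesssim (1-|w|)^{-2-p}\int_{D(w)}|f'|^p(1-|z|)^p\,dA\lesssim (1-|w|)^{-1-p}\|f\|^p$.
That is, $|f'(w)|(1-|w|)\lesssim (1-|w|)^{-1/p}\|f\|_{AT^\infty_p}$, which is not a Bloch bound. Indeed $f(z)=(1-z)^{-1/p}$ lies in $AT^\infty_p$, since $\int_{S(I)}|1-z|^{-1}\,dA\lesssim |I|$. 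Yet $(1-r)|f'(r)|=\frac1p(1-r)^{-1/p}\to\infty$.

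The same error appears in step (1): the logarithmic growth you claim is false. The cruder bound $|f(a)|\lesssim(1-|a|)^{-1/p}\|f\|$ that you also give is correct, and it is the one needed. Consequently, in step (3) the factor $\bigl(|f'(\varphi(z))|(1-|\varphi(z)|^2)\bigr)^{p-2}$ is unbounded, and the reduction to exponent $2$ collapses.

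Step (4) contains a further misidentification. The Garsia/Littlewood--Paley characterization of BMOA uses $|g'|^2(1-|z|)\,dA$. Your measure $|g'|^2(1-|z|)^2\,dA$ instead describes the paper's $AT^\infty_2$, a much larger space: it contains $(1-z)^{-1/2}$, which is not even in $H^2$. So the BMOA estimate for $C_\varphi$ cannot be invoked there. Step (4) is also left as a plan rather than an argument.

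The missing idea is to keep $|f'(\varphi(z))|^{p}(1-|\varphi(z)|)^{p-2}$ intact instead of trying to bound it away. The paper proceeds as follows:
\begin{itemize}
\item Write the Carleson condition in the conformally invariant form $\sup_{a\in\D}\int_\D|f'|^p(1-|z|)^{p-1}\log\frac1{|\phi_a(z)|}\,dA(z)$.
\item Use Schwarz--Pick, and $p\ge 2$, only to bound $\bigl(|\varphi'(z)|(1-|z|)\bigr)^{p-2}\le(1-|\varphi(z)|^2)^{p-2}$. This leaves exactly the Jacobian $|\varphi'|^2$.
\item Convert the remaining factor $(1-|z|)$ into $(1-|\varphi(z)|)$ via $1-|z|\le\frac{1+|\varphi(0)|}{1-|\varphi(0)|}(1-|\varphi(z)|)$. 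This is the only source of the constant $(1-|\varphi(0)|)^{-1}$, hence of $\|C_\varphi\|\lesssim(1-|\varphi(0)|)^{-1/p}$.
\item Finish with the non-univalent change of variables and Littlewood's inequality $\sum_{\varphi(z)=u}\log\frac1{|\phi_a(z)|}\le\log\frac1{|\phi_{\varphi(a)}(u)|}$.
\end{itemize}
Your exponent-$2$ sketch in step (4) is essentially this argument. It must be run on the full exponent-$p$ integrand, not on a reduced one.
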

\begin{proof}
We begin by noting a chain of equivalent norms. The first equivalence follows from \eqref{TentCMDer}, while the second one relies on a description of classical Carleson measures (see \cite[Lemma 3.3]{Garnett}).  
\begin{align*}
    \|f\|_{AT_{p}^{\infty}}^p
    & \asymp
    |f(0)|^p+\sup_I \frac{1}{|I|}\int_{S(I)}|f'(z)|^p (1-|z|)^p dA(z) 
    \\ & \asymp|f(0)|^p+\sup_{\a \in\D} \int_\D  |f'(z)|^p (1-|z|)^p |\phi_\a'(z)|dA(z), 
\end{align*}
where $\phi_\a$ denotes the involutive automorphism $\phi_\a(z)=\frac{\a-z}{1-\overline{\a}z}$. 
Equivalently,
\begin{equation}
\label{triebel norm}
     \|f\|_{AT_{p}^{\infty}}^p \asymp |f(0)|^p+\sup_{\a \in\D} \int_\D  |f'(z)|^p (1-|z|)^{p-1} \log \frac{1}{|\phi_\a (z)|} dA(z).
\end{equation}
By using the last equivalence,
$$\| C_{\varphi}(f)\|_{AT_{p}^{\infty}}^p \asymp |f(\varphi(0))|^p+\sup_{\a \in\D} \int_\D |(f\circ \varphi)'(z)|^p (1-|z|)^{p-1} \log \frac{1}{|\phi_\a (z)|} dA(z),$$
and we will estimate both terms separately. 

First, observe that if $f\in AT_{p}^{\infty} $, then $|f(\varphi(0))|^p\lesssim\frac{\|f\|_{AT_p^{\infty}}^p}{1-|\varphi(0)|}$. 

On the other hand, Schwarz-Pick Lemma (\cite[Lemma 1.2]{Garnett}) and \cite[Corollary 1.3]{Garnett} yield $(1-|z|^2)|\varphi'(z)|\leq (1- |\varphi(z)|^2) $ and $(1-|z|^2)\leq \frac{1+|\varphi(0)|}{1-|\varphi(0)|}  (1- |\varphi(z)|) $, respectively. So, for fixed $\alpha\in \D$, 
\begin{align*}
 & \int_\D |(f\circ \varphi)'(z)|^p (1-|z|)^{p-1} \log \frac{1}{|\phi_\a (z)|} dA(z)
\\& \leq  \int_\D|f'( \varphi(z))|^p (1-|\varphi(z)|)^{p-2} (1-|z|)\log \frac{1}{|\phi_\a (z)|} |\varphi'(z)|^2dA(z)
\\& \leq \frac{1+|\varphi(0)|}{1-|\varphi(0)|} \int_\D|f'( \varphi(z))|^p (1-|\varphi(z)|)^{p-1} \log \frac{1}{|\phi_\a (z)|} |\varphi'(z)|^2\ dA(z).
\end{align*}
Then, by using the non-univalent change of variable formula, due to Shapiro \cite[p. 398]{Sh} and \cite{Smith}, we obtain 
\begin{align*}
    &\int_\D |(f\circ \varphi)'(z)|^p (1-|z|)^{p-1} \log \frac{1}{|\phi_\a (z)|} dA(z) \\& \lesssim \frac{1+|\varphi(0)|}{1-|\varphi(0)|} \int_\D |f'( u)|^p (1-|u|)^{p-1} \log \frac{1}{|\phi_{\varphi(\a)} (u)|} dA(u) 
    \\& \leq \frac{1+|\varphi(0)|}{1-|\varphi(0)|} \sup\limits_{\b \in \D}\int_\D |f'( u)|^p (1-|u|)^{p-1} \log \frac{1}{|\phi_\b (u)|} dA(u)
    \lesssim \frac{1+|\varphi(0)|}{1-|\varphi(0)|} \|f\|_{AT_{ p}^{\infty}}^p,
\end{align*}
where the last equality is followed by \eqref{triebel norm} and the proof is complete.
\end{proof}
\begin{theorem}
   Let $1<p,q<\infty$ such that $p\neq q$, $\frac{1}{p}+\frac{1}{q}<1$ and $p >  2$. Then the Hilbert operator $\mathcal{H}: RM(p,q)\rightarrow RM(p,q)$ is bounded and 
   $$\int_0^1 \frac{dt}{t^{1-\left(\frac{1}{p}+\frac{1}{q}\right)}(1-t)^{\frac{1}{p}+\frac{1}{q}}}\lesssim \|\mathcal{H}\|\lesssim \int_0^1 \frac{dt}{t^{1-\frac{1}{q}}(1-t)^{\frac{1}{p}+\frac{1}{q}}} .$$
\end{theorem}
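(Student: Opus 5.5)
Boundedness itself is already given by Theorem~\ref{about Hm}, so only the two norm inequalities need proof. The natural route is the Diamantopoulos--Siskakis representation: for $f\in AT^q_p$ and $z\in\D$, deform the segment $[0,1]$ in $\I(f)(z)=\int_0^1 f(t)(1-tz)^{-1}\,dt$ into the path $t\mapsto \varphi_t(z)=\frac{t}{(t-1)z+1}$. This gives
\[
\H(f)(z)=\int_0^1 T_t f(z)\,dt,\qquad T_tf(z)=\omega_t(z)\,f(\varphi_t(z)),\qquad \omega_t(z)=\frac{1}{(t-1)z+1}.
\]
The identity holds on polynomials. It extends to all of $AT^q_p$ through Proposition~\ref{Hmu=Imu}, since Lebesgue measure is a 1-CM, together with the growth estimate of Lemma~\ref{growth}, which justifies the contour deformation. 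Minkowski's inequality then gives $\|\H\|\le\int_0^1\|T_t\|_{AT^q_p\to AT^q_p}\,dt$. The upper bound therefore reduces to showing
\[
\|T_t\|\lesssim t^{\frac1q-1}(1-t)^{-\frac1p-\frac1q}.
\]

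To estimate $\|T_t\|$ I would use the Cohn--Verbitsky factorization $AT^q_p=AT^{q}_{\infty}\cdot AT^{\infty}_{p}=H^q\cdot AT^\infty_p$. Write $f=gh$ with $\|g\|_{H^q}\|h\|_{AT^\infty_p}\lesssim\|f\|_{AT^q_p}$, and split the weight as $\omega_t=\omega_t^{a}\omega_t^{b}$ with $a+b=1$. Then $T_tf=(\omega_t^{a}\,g\circ\varphi_t)(\omega_t^{b}\,h\circ\varphi_t)$, and the product is bounded by a Hölder-type inequality $\|uv\|_{AT^q_p}\le\|u\|_{H^q}\|v\|_{AT^\infty_p}$, which follows directly from the definitions.

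The $H^q$ factor is handled by the classical weighted-composition estimate for Hardy spaces from \cite{DiS}: with $a=1$, $\|\omega_t\,g\circ\varphi_t\|_{H^q}\lesssim t^{1/q-1}(1-t)^{-1/q}\|g\|_{H^q}$. The same splitting forces $b=0$, and the $AT^\infty_p$ factor is then controlled by Lemma~\ref{composition on triebel spaces}. Since $\varphi_t(0)=t$, it gives $\|h\circ\varphi_t\|_{AT^\infty_p}\lesssim(1-t)^{-1/p}\|h\|$; this is exactly where the hypothesis $p\ge2$ enters. Multiplying the two factors produces $t^{1/q-1}(1-t)^{-1/p-1/q}$, and integrating in $t$ gives the stated upper bound. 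The integral converges because $1/p+1/q<1$.

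The main obstacle is checking the product inequality and the validity of the factorization with constants uniform in $t$. In particular, one must make sure the whole weight $\omega_t$ can be placed on the Hardy factor, where $|\omega_t|\le 1/t$ on $\D$ is the only crude bound available.

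For the lower bound I would follow \cite{DJV}. Test on $f_\varepsilon(z)=(1-z)^{-\gamma}$ with $\gamma=\frac1p+\frac1q-\varepsilon$ (or with a $\log$-correction). By \eqref{estimation} and \eqref{RMpq}, $\|f_\varepsilon\|$ is finite but blows up as $\varepsilon\to0$. Computing $\H f_\varepsilon$ through the representation above gives
\[
T_tf_\varepsilon(z)=t^{-\gamma}(1-t)^{\gamma-1}\bigl((1-t)z+\ldots\bigr)\ldots,
\]
and more precisely $\H f_\varepsilon(z)=\int_0^1 t^{\gamma-1}(1-t)^{-\gamma}\,\frac{dt}{(1-z)^{\gamma}\,(\cdots)}$. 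On the region $|1-z|$ small, the factor $(\cdots)$ tends to $1$ uniformly for $t$ away from $1$. Hence
\[
\H f_\varepsilon \ge \Bigl(\int_0^{1}t^{\gamma-1}(1-t)^{-\gamma}\,dt-o(1)\Bigr)f_\varepsilon
\]
plus a term whose norm is $O(1)$. Dividing by $\|f_\varepsilon\|\to\infty$ and letting $\varepsilon\to0$ yields the lower bound with exponent $\frac1p+\frac1q$ in both places, as stated.
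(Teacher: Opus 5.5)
Your proposal follows the paper's own proof essentially step by step. For the upper bound both use the Diamantopoulos--Siskakis representation $\mathcal{H}f=\int_0^1\omega_t\,(f\circ\varphi_t)\,dt$, Minkowski's inequality, and the Cohn--Verbitsky factorization $f=F\Phi$, with the whole weight $\omega_t$ on the $H^q$ factor and Lemma~\ref{composition on triebel spaces} on the $AT^\infty_p$ factor; for the lower bound both use the test functions $(1-z)^{-\gamma}$ with $\gamma\uparrow\frac1p+\frac1q$, the identity $\mathcal{H}f_\gamma=f_\gamma\Psi_\gamma$, and localization near $z=1$.

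Two corrections. First, the multiplier estimate $\|uv\|_{AT^q_p}\lesssim\|u\|_{H^q}\|v\|_{AT^\infty_p}$ does not follow directly from the definitions. Dominating $|u|$ on $\Gamma_{1/2}(\xi)$ by its nontangential maximal function leaves the cone integral of $|v|^p$, which can be unbounded in $\xi$ (e.g.\ $v=(1-z)^{-1/p}\in AT^\infty_p$). The estimate is the converse half of Cohn--Verbitsky's Theorem 4.1, which is what the paper invokes. Second, \eqref{estimation} only covers $|a|<1$ with exponent above $\frac1p+\frac1q$, so the finiteness and blow-up of $\|(1-z)^{-\gamma}\|_{AT^q_p}$ must be checked directly, as the paper does.
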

\begin{proof}

 First, note that $\mathcal{H}$ is well defined and $\mathcal{H}(f)=\mathcal{I}(f)$ on $ AT_p^q$ by  Proposition~\ref{Hmu=Imu}. The integral representation $\mathcal{I}(f)$ can be interpreted as an improper line integral through the well-known change of variables introduced by Diamantopoulos and Siskakis (see \cite{DiS}). This allows us to view $\mathcal{H}$ as an average of weighted composition operators.
 $$\mathcal{H}(f)(z)=\int_0^1\frac{1}{(s-1)z+1} f \left(\frac{s}{(s-1)z+1} ds\right)= \int_0^1 \om_s(z) (f\circ \varphi_s)(z) ds ,$$
 where $\varphi_s(z)=\frac{s}{(s-1)z+1}$ and $\om_s(z)=\frac{1}{(s-1)z+1}$, $z \in \D$. Therefore, to derive an upper estimate of the norm, we may state that
 \begin{equation*}
     \| \H(f)\|_{AT^q_p} \leq \int_0^1 \|\om_s f\circ \varphi_s \|_{AT^q_p}\ ds.
 \end{equation*}

Now, \cite[Theorem 4.1]{CohnVerbitsky} yields that for $f\in AT_p^q$ there exists an outer function $F\in H^q$ and a function $\Phi \in AT^{\infty}_{p}$ such that $f=F\cdot \Phi$, and moreover,
\begin{enumerate}
    \item $\| F \|_{H^q}\lesssim \|f\|_{AT^p_q}$,
    \item $ \|\Phi\|_{AT^{\infty}_{p}}\leq 1$.
\end{enumerate}
Now, note $\om_s (f\circ \varphi_s)= \om_s (F\circ \varphi_s) \cdot (\Phi\circ \varphi_s)$ for each $s \in [0,1)$. 

On the one hand, we are proving $\om_t (F\circ \varphi_t) \in H^q$. For fixed $t\in [0,1)$, $|\om_t(z)|\leq \frac{1}{t}$. So, $\om_t\in H^{\infty}$ for each $t\in [0,1)$. In addition, since $\varphi_t\in \H(\D)$, composition operator $C_{\varphi_t}$ is bounded on $H^q$ and $\|C_{\varphi_t}\|\leq \left(\frac{1+|\varphi_t(0)|}{1-|\varphi_t(0)|}\right)^{\frac{1}{q}}$. Then, bearing in mind (1), $F\circ \varphi_t\in H^q$ and $\|F\circ \varphi_t\|_{H^q}\lesssim \frac{2^{\frac{1}{q}}}{(1-t)^{\frac{1}{q}}}\|f\|_{AT_p^q}$. Therefore, $\om_t (F\circ \varphi_t) \in H^q$ and
\begin{equation}
    \label{1}
 \|\om_t (F\circ \varphi_t) \|_{H^q} \lesssim \frac{1}{t^{1-\frac{1}{q}}}  \frac{1}{(1-t)^{\frac{1}{q}}}\|F\|_{H^q} \lesssim \frac{1}{t^{1-\frac{1}{q}}}  \frac{1}{(1-t)^{\frac{1}{q}}}\|f\|_{AT_p^q}.
\end{equation}

 On the other hand, it follows from Lemma \ref{composition on triebel spaces}  and (2) that $\Phi\circ \varphi_t \in AT^{\infty}_{p}$ and 
 \begin{equation}
 \label{2}
 \|\Phi\circ \varphi_t\|_{AT^{\infty}_{p}}\lesssim \frac{1}{(1-t)^{\frac{1}{p}}}    .
 \end{equation}
 As a consequence of the second part of \cite[Theorem 4.1]{CohnVerbitsky}, \eqref{1} and \eqref{2}, $\om_t (f\circ \varphi_t ) \in AT_p^q$ and $ \|\om_t (f\circ \varphi_t) \|_{H^q}\lesssim \frac{1}{t^{1-\frac{1}{q}}}  \frac{1}{(1-t)^{\frac{1}{p}+\frac{1}{q}}}\|f\|_{AT_p^q}$ for each $t \in [0,1)$. So that, we have just shown the following upper estimate of the classical Hilbert operator norm on tent spaces $AT_p^q$
 $$\|\H\|\lesssim \int_0^1\frac{1}{t^{1-\frac{1}{q}}}  \frac{1}{(1-t)^{\frac{1}{p}+\frac{1}{q}}}dt.$$
 
With the aim of obtaining the lower estimate of the norm, we follow the proof of \cite[Theorem 4]{DJV}. Consider the family of test functions $f_{\alpha}(z)=(1-z)^{-\a}$, $\a<\frac{1}{p}+\frac{1}{q}<1$. Then $f_{\a}\in AT_p^q$ (\cite[Example 2.3]{AgConRodr}). 

As the authors show in  \cite[Theorem 4, p. 2811]{DJV}, essentially a computation involving the change of variables  $\om=\frac{1-rz}{1-z}$ shows that $\H(f_{\a})=f_{\a}\Phi_{\a}$, where $$\Phi_\a(z)=\int_1^{\infty}\frac{d\om}{\om(\om-z)^{1-\a}}.$$
After gaining sufficient insight into this integral, we can assume $\om \in \mathbb{R}$, $\om \geq 1$ and it is obvious that $\Phi_\a$ is an analytic function on  $\overline{\D}\setminus\{1\}$, which attains its maximum modulus at $z=1$.

Let us show that $$\lim\limits_{\a \to \frac{1}{p}+\frac{1}{q} }\|f_\a \|_{AT_p^q}=\infty.$$
Indeed, for fixed $M>0$, let $\delta=\frac{1}{M}$. Consider $0<\frac{1}{p}+\frac{1}{q}-\alpha<\delta$. Then,
\begin{align*}
    \|f_\a \|_{AT_p^q}^q &\gtrsim \int_0^1 \left( \int_{1-\theta}^1\frac{1}{ |1-re^{i\theta}|^{\a p}} dr \right)^{ \frac{q}{p}} \frac{d\theta}{2\pi}=\int_0^1 \left( \int_{1-\theta}^1 \frac{1}{((1-r)^2+ 2r(1-\cos\theta))^{\frac{\a p}{2}}}dr \right)^{\frac{q}{p}} \frac{d\theta}{2\pi}
    \\ & \geq \int_0^1 \left( \int_{1-\theta}^1 \frac{1}{\theta^{\a p}} dr \right)^{\frac{q}{p}}\frac{d\theta}{2\pi} \asymp \int_0^1 \frac{1}{\theta^{\a q -\frac{q}{p}}} d \theta =\frac{1}{ q \left( \frac{1}{p}+\frac{1}{q}-\alpha \right) } > M . 
\end{align*}
Consider $g_{\a}=\frac{f_\a}{\| f_\a \|_{AT_p^q}}$. Now we can assert that this family has the following three properties:\begin{itemize}
    \item[a)] $|g_\a(z)|\geq 0$,
    \item[b)] $\| g_\a\|_{AT_p^q}=1$,
    \item[c)]$|g_\a(z)|\to 0$ uniformly on any compact subset of $\overline{\D}\smallsetminus\{1\}$, as $\a \to \frac{1}{p}+\frac{1}{q}$.
\end{itemize}

Now, we can estimate the following:
\begin{align*}
\|\H(g_\a) \|_{AT_p^q}-|\Phi_{\frac{1}{p}+\frac{1}{q}}(1)|=\|\H(g_\a)\|_{AT_p^q}-\||\Phi_{\frac{1}{p}+\frac{1}{q}}(1)| g_\a\|_{AT_p^q}=\||g_\a|(|\Phi_\a|- |\Phi_{\frac{1}{p}+\frac{1}{q}}(1)|)\|_{AT_p^q}.
\end{align*}
Let us show that $\||g_\a|(|\Phi_\a|- |\Phi_{\frac{1}{p}+\frac{1}{q}}(1)|)\|_{AT_p^q} \to 0$ as $\a \to \frac{1}{p}+\frac{1}{q}$. 
On the one hand, since the function $\Phi_\a$ is continuous on the compact set $\{(z, \a)\in \overline{\D}\times [0,\frac{1}{p}+\frac{1}{q}]\} $, then it is uniformly continuous on this compact. So, for fixed $\varepsilon>0$, there exists $\delta>0$ such that if $z \in D(1, \delta)$ and $\alpha\in \left(\frac{1}{p}+\frac{1}{q}-\delta, \frac{1}{p}+\frac{1}{q}\right)$, then $|\Phi_\a(z)- \Phi_{\frac{1}{p}+\frac{1}{q}}(1)|<\varepsilon$. 
Therefore, 
$$\||g_\a(z)|(|\Phi_\a(z)|- |\Phi_{\frac{1}{p}+\frac{1}{q}}(1)|)\chi_{D(1,\delta)}(z)\|_{AT_p^q}\leq \varepsilon\||g_\a(z)|\chi_{D(1,\delta)}(z)\|_{AT_p^q}\leq \varepsilon.$$

On the other hand, bearing in mind c), there exists $\delta_1
>0$ such that $|g_\a(z)|<\varepsilon$ for all $\a \in \left(\frac{1}{p}+\frac{1}{q}-\delta_1, \frac{1}{p}+\frac{1}{q} \right)$ and $z \in \D\smallsetminus D(1, \delta)$. So that,
\begin{align*}
\||g_\a(z)|(|\Phi_\a(z)|- |\Phi_{\frac{1}{p}+\frac{1}{q}}(1)|)\chi_{\D \smallsetminus D(1,\delta)}(z)\|_{AT_p^q}&\leq \varepsilon\|(|\Phi_\a(z)|- |\Phi_{\frac{1}{p}+\frac{1}{q}}(1)|)\chi_{\D \smallsetminus D(1,\delta)}(z)\|_{AT_p^q} 
\\ & \leq  \varepsilon (|\Phi_\a(1)|- |\Phi_{\frac{1}{p}+\frac{1}{q}}(1)|)
\\ & <\varepsilon \left(\frac{\pi}{\sin \left( \pi \left( \frac{1}{p}+\frac{1}{q} -\delta_1 \right) \right)} +\frac{\pi}{\sin \left(\pi \left( \frac{1}{p}+\frac{1}{q} \right) \right)}\right) .
\end{align*}  
Finally, a simple computation shows that 
$$\Phi_{\frac{1}{p}+\frac{1}{q}}(1)=\int_1^{\infty}\frac{ds}{s(s-1)^{1-\frac{1}{p}-\frac{1}{q}}}= \int_0^1 s^{-\left(\frac{1}{p}+\frac{1}{q}\right)} (1-s)^{\frac{1}{p}+\frac{1}{q}-1}ds=\int_0^1 s^{\frac{1}{p}+\frac{1}{q}-1} (1-s)^{-\left(\frac{1}{p}+\frac{1}{q}\right)}ds.$$
All in all, we have just proved the lower estimate of the norm
$$\int_0^1 \frac{dt}{t^{1-\left(\frac{1}{p}+\frac{1}{q}\right)}(1-t)^{\frac{1}{p}+\frac{1}{q}}}\lesssim \|\mathcal{H}\|.$$
\end{proof}

\begin{remark}
Note that the proof of Theorem \ref{norm estimation} is based on the integral representation of the Hilbert operator, $\H(f)=\I(f)$, $f\in AT_p^q$, which we have shown to be valid if $\frac{1}{p}+\frac{1}{q}<1$ and $p > 2$. However, the proof also works for $p=2$ and this yields an estimation of the norm of the integral operator $\I$ for $\frac{1}{p}+\frac{1}{q}<1$ and $p = 2$.
\end{remark}

\end{document}